\theoremstyle{definition}
  \newtheorem{theorem}{Theorem}[subsection]
  \newtheorem{lemma}[theorem]{Lemma}
\theoremstyle{remark} 
  \newtheorem{remark}[theorem]{Remark}
  \newtheorem{example}[theorem]{Example}
\theoremstyle{definition}
  \newtheorem{definition}[theorem]{Definition}
\newcommand {\Aut}{\operatorname{Aut}}
\newcommand {\Hom} {\operatorname {Hom}}
\renewcommand {\geq} {\geqslant}
\newcommand {\sub} {\subseteq}
\newcommand{\FM}[1]{\mathcal{F\!M}_{#1\!}}
\newcommand{\con}[1]{C_{#1}\ \!\!}
\newcommand{\non}[1]{\mathrm{#1}}
\newcommand{\isop}[1]{{#1}_{\non{iso}}^{\non{op}}}
\newcommand{\Sets}{\bold{Sets}}
\newcommand{\fnS}{\bold{S}}
\newcommand{\FVect}[1]{\textsf{F}\bold{Vect}_{#1}}
\newcommand{\Sch}[1]{\bold{Sch}_{#1}}
\newcommand{\Mon}[2]{\bold{Mon}_{#1}{#2}}
\newcommand {\tensor}{\otimes}
\newcommand{\Spec}[1]{\mathrm{Spec}\ \!#1}
\newcommand{\ZZ}{\mathbb{Z}}
\newcommand{\qr}[1]{\underline{\mathrm{#1}}}
\newcommand{\conqr}[1]{\qr{C}_{#1}}
\def\Ac{\mathcal{A}}
\def\Bc{\mathcal{B}}
\def\Cc{\mathcal{C}}
\def\Dc{\mathcal{D}}
\def\Mc{\mathcal{M}}
\def\Qc{\mathcal{Q}}
\def\1{{\bf 1}}
\def\lra{\longrightarrow}
\DeclareSymbolFont{largesym}{OML}{cmm}{m}{it}
\DeclareMathSymbol{\nstnsmall}{0}{largesym}{"22}
\newcommand{\btd}{\mathrel{\scalebox{1.75}{\color{lightgray}{$\blacktriangledown$}}}}
\title{ Higher configuration operads\\{\small by way of}\\quiver Grassmannians} 
\author{Tyler Foster}
\address{Department of Mathematics\\
Yale University\\
New Haven, CT 06511, USA}
\email{tyler.foster@yale.edu}
\date {\today}
\subjclass[2010]{14D20, 18D50}
\keywords{operad, configuration space}
\thanks{The author would like to thank, above all, Mikhail Kapranov for guidance at every stage of this work. Many thanks go to Gennaro di Brino, Tim Kramer, Sam Payne, and Markus Reineke for helpful conversations. The author would also like to acknowledge conversations with Tobias Dyckerhoff and Jacob Lurie that were of great help at early stages in this work.}
\begin{document}
\maketitle

\begin{abstract}
We introduce a construction that associates, to each finite dimensional $\Bbbk$-vector space $V$, a $\mathbb{Z}_{\geq0}$-indexed family $\mathrm{FM}_{V}=\big\{\mathrm{FM}_{V\!}(n)\big\}_{n=1}^{\infty}$ of projective $\Bbbk$-varieties. This family comes naturally equipped with the structure of a operad in the category of $\Bbbk$-schemes. When $\mathrm{dim}_{\!\Bbbk} V=1$, the operad $\mathrm{FM}_{V}$ contains, as a suboperad, the family $\big\{\overline{M}_{0,n+1}\big\}_{n=2}^{\infty}$ formed by the moduli spaces of stably marked rational curves. For $n=\mathrm{dim}_{\Bbbk} V$ arbitrary, $\mathrm{FM}_{V}$ contains the family of Chen-Gibney-Krashen moduli spaces of stably marked trees of projective $n$-spaces as a suboperad.

We realize the operad $\mathrm{FM}_{V}$ as part of a larger theory that describes how to construct operads from suitable functors. Given a category $\Cc$ that satisfies conditions allowing us to consider the operation of "substituting a value into an argument" within $\Cc$, and given any functor $F:\Cc\lra\Dc$ satisfying a variant of right-exactness that expresses preservation of substitutions, we use $F$ to construct a version of a set-valued operad whose inputs are given by objects in $\Cc$. When $\Cc$ happens to be the category of nonempty finite sets, we obtain operads in the classic sense.

Finally, we show that in many cases it is possible to realize these operads as disjoint unions of quiver Grassmannians, the family $\mathrm{FM}_{V}=\big\{\mathrm{FM}_{V\!}(n)\big\}_{n=1}^{\infty}$ being one example.
\end{abstract}

\vskip 1cm

\section*{Introduction}

Our overarching concern in this text is the study of operads of algebraic varieties over a fixed ground field $\Bbbk$. The family
$$
\big\{\overline{M}_{0,n+1}\big\}_{n=1}^{\infty}
$$
formed by the moduli spaces $\overline{M}_{0,n+1}$ of stable $(n+1)$-marked rational curves, where by definition $\overline{M}_{0,1+1}:=\Spec{\mathbb{C}}$, is perhaps the best known example of such an operad. We denote this operad $\mathrm{M}$. More recent examples appear in \cite{CGK}. In \cite{CGK}, Chen, Gibney, and Krashen define, for each pair of positive integers $d$ and $n$, a smooth algebraic $\Bbbk$-variety that we will here denote
$$
\mathrm{CGK}_{d}(n)
$$
The variety $\mathrm{CGK}_{d}(n)$ is a moduli space classifying geometric objects called {\em stable} $n$-{\em marked trees of projective} $d$-{\em space}. This example will be of central importance to us, so we take a moment to discuss it in more detail:

\subsection{Chen-Gibney-Krashen spaces}
For each nonnegative integer $N$, let a {\em projective $N$-space over} $\Bbbk$ to be any $\Bbbk$-variety $P$ isomorphic to $\mathbb{P}^N_{\Bbbk}$. Fix a vector space $V$, let $d=\mathrm{dim}_{\Bbbk}V$, and define $H$ to be the projective $(d-1)$-space $H=\mathbb{P}(V)$, the projectivization of $V$. For each integer $n\geq2$, define an $n$-{\em marked projective} $d$-{\em space} to be any triple
$$
(P,\ \iota,\ p)
$$
consisting of a projective $d$-space $P$, a closed embedding $\iota:H\hookrightarrow P$, and an injection
$$
p_{(-)}:\{1,\dots,n\}\xhookrightarrow{\ \ \ }P(\Bbbk)-H(\Bbbk)
$$
of sets, whose values $p_{i}$ we call {\em marked points in} $P$.

If $\mathscr{B}\ell_{p_i}P$ denotes the blow-up of $P$ at one of these marked points $p_{i}\in P(\Bbbk)-H(\Bbbk)$, then the projective geometry of $P$ induces a canonical isomorphism between the $(n-1)$-dimensional variety underlying the exceptional divisor $D$ in $\mathscr{B}\ell_{p_i}P$ and the subvariety $\iota(H)\subset P$. Thus, given a second integer $n'\geq2$, and an $n'$-marked projective $d$-space $(P',\iota',p')$, there is a canonical way to glue the projective $d$-space $P'$ to the blow-up $\mathscr{B}\ell_{p_i}P$, via the identification of $\iota'(H)\in P'$ with the exceptional divisor $D$ in $\mathscr{B}\ell_{p_i}P$. Upon gluing, we obtain a new variety
$$
B\ \ =\ \ P'\underset{\!\!\!\!\iota'(H)\sim D\!}{\sqcup}\mathscr{B}\ell_{p_i}P
$$
that comes equipped with a closed embedding $\jmath:H\hookrightarrow B$, induced by our embedding $\iota:H\hookrightarrow P$, and with an injection $q:\{1,\dots,n+n'-1\}\xhookrightarrow{\ \ \ }B(\Bbbk)-H(\Bbbk)$ of sets.

Inductively then, we define a {\em stable} $n$-{\em tree of projective} $d$-{\em spaces} to be any triple $(B,\jmath,q)$ consisting of a $\Bbbk$-variety $B$, a closed embedding $\jmath:H\hookrightarrow B$, and an inclusion $q:\{1,\dots,n\}\xhookrightarrow{\ \ \ }B(\Bbbk)-H(\Bbbk)$, such that that we can obtain this triple by iterating the above construction.
An {\em isomorphism} $B\xrightarrow{\ \sim\ }B'$ of stable $n$-trees of projective $d$-spaces is any isomorphism of the underlying varieties $B$ and $B'$, which commutes with the embeddings $H\hookrightarrow B$ and $H\hookrightarrow B'$, and commutes with the markings $\{1,\dots,n\}\xhookrightarrow{\ \ \ }B(\Bbbk)-H(\Bbbk)$ and $\{1,\dots,n\}\xhookrightarrow{\ \ \ }B'(\Bbbk)-H(\Bbbk)$.

For each positive integer $d$ and each integer $n\geq2$, the {\em Chen-Gibeny-Krashen space} $\mathrm{CGK}_{V}(n)$ is a smooth, projective $\Bbbk$-variety parametrizing isomorphism classes of stable $n$-trees of projective $n$-spaces. When $V=\Bbbk^d$, we denote the associated Chen-Gibney-Krashen space $\mathrm{CGK}_{d}(n)$. When $d=1$, i.e., when $V=\Bbbk$, the projective space $H=\mathbb{P}(V)$ is simply a $\Bbbk$-point, and we have an identity
$$
\mathrm{CGK}_{1}(n)=\overline{M}_{0,n+1}.
$$
It will be helpful, for our purposes, to define one additional Chen-Gibney-Krashen moduli space, namely $\mathrm{CGK}_{d}(1)=\Spec{\Bbbk}$.

\subsection{Chen-Gibeny-Krashen operads}
In \cite{West}, C. Westerland points out that for each positive integer $d$, the family of Chen-Gibney-Krashen spaces $\big\{\mathrm{CGK}_{d}(n)\big\}_{n=1}^{\infty}$ forms an operad in the category of $\Bbbk$-schemes. Its operadic structure has a natural description in terms of the stably marked trees of projective $d$-spaces. Namely, for $1\geq i\geq m$, the composition morphism
$$
\gamma_{i}:\mathrm{CGK}_{V}(m)\times\mathrm{CGK}_{V}(n)\lra\mathrm{CGK}_{V}(m+n-1)
$$
takes any stable $m$-marked tree $(B,\jmath,q)$ of projective $d$-spaces, and any stable $n$-marked tree $(B',\jmath',q')$ of projective $d$-spaces, proceeds to blow-up $B$ at its marked point $q_{i}$, and then glue $B'$ to this blow-up $\mathscr{B}\ell_{q_i}B$ by identifying the divisor $H'$ in $B'$ with the exceptional divisor in $\mathscr{B}\ell_{q_i}B$. The result is a stable $(m+n-1)$-tree of projective $d$-space $(B'',\jmath'',q'')$.

\subsection{Fulton-MacPherson compactifications \& families of screens}
Fix a smooth, $d$-dimensional $\Bbbk$-variety $X$, a $\Bbbk$-point $x$ in $X$, and an identification $V\cong \mathrm{T}_{x}^{\ast}X$ with the cotangent space at $x$.

For each $n$, the {\em Fulton-MacPherson compactification} $X[n]$ is a compactification of the moduli space of $n$-tuples of distinct marked points in $X$. Fulton and MacPherson introduce these spaces in \cite{FM}, in order to obtain further algebraic invariants of $X$. The variety $X[n]$ comes with a  proper morphism
$$
\pi:X[n]\lra\!\!\!\!\rightarrow X^{n}
$$
and the Chen-Gibney-Krashen-space $\mathrm{CGK}_{V}(n)$ can be realized as the fiber of $\pi$ over the $\Bbbk$-point $(x,\dots,x)$ in the diagonal $\Delta:X\hookrightarrow X^n$.

The Fulton-MacPherson compactification $X[n]$ is itself a moduli space in the category of $X^n$-schemes. It classifies objects over $X^n$ called "compatible families of screens." For each subset $J\sub \{1,\dots,n\}$ containing at least two elements, let $\mathscr{I}_{J}$ denote the ideal sheaf of the diagonal $\Delta:X\hookrightarrow X^J$, and let $\mathrm{pr}_{J}:X^{n}\lra\!\!\!\!\rightarrow X^{J}$ denote the projection induced by the inclusion $J\hookrightarrow\{1,\dots,n\}$. Then a {\em screen over} $X^J$ is any epimorphism $\mathrm{pr}_{\!J}^{\ast}\mathscr{I}_{J}\lra\!\!\!\!\rightarrow\mathscr{E}_{J}$ of $\mathscr{O}_{X^n}$-modules whose codomain $\mathscr{E}_{J}$ is invertible, i.e., is locally free of rank 1. A {\em compatible family of screens over} $X^n$ is any collection of screens $\mathrm{pr}_{\!J}^{\ast}\mathscr{I}_{J}\lra\!\!\!\!\rightarrow\mathscr{E}_{J}$, one for each subset $J\sub I$ with $|J|\geq2$, such that for each proper inclusion $J\subset J'$ of such subsets in $I$, there exists a morphism making the diagram
$$
\xy
(0,0)*+{\mathrm{pr}_{\!J}^{\ast}\mathscr{I}_{J}}="0";
(0,15)*+{\mathrm{pr}_{\!J'}^{\ast}\mathscr{I}_{J'}}="1";
(20,0)*+{\mathscr{E}_{J}}="2";
(20,15)*+{\mathscr{E}_{J'}}="3";
{\ar@{^{(}->} "0"; "1"};
{\ar@{->>} "1"; "3"};
{\ar@{->>} "0"; "2"};
{\ar@{-->} "2"; "3"};
\endxy
$$
commute. This morphism $\mathscr{E}_{J}\dashrightarrow\mathscr{E}_{J'}$ can be zero, but it is unique if it exists.

The Fulton-MacPherson compactification $X[n]$ is the moduli space of isomorphism classes of compatible families of screens over $X^n$. Thus the realization of the Chen-Gibney-Krashen space as the fiber over $(x,\dots,x)\in X^I$ gives us a second description of $\mathrm{CGK}_{d}(n)$ as a moduli space parametrizing compatible families of screens in a tangent neighborhood of the point $(x,\dots,x)\in X^I$  (for details, see \cite[section 3]{CGK}).

\subsection{A question}
It is natural to ask what form the operadic structure on the family $\big\{\mathrm{CGK}_{d}(n)\big\}_{n=1}^{\infty}$ takes in terms of this latter description each of the varieties $\mathrm{CGK}_{V}(n)$ as a moduli space of compatible families of screens.

Our answer to this question, constituting the bulk of this text, is that the passage from the datum consisting of $X$ and a $\Bbbk$-point $x$ in $X$, to the operad $\big\{\mathrm{CGK}_{d}(n)\big\}_{n=1}^{\infty}$, is just the rank-1 part of one example of a far more general, categorical construction that builds operads from functors.

We call this construction the {\em Fulton-MacPherson construction}. Describing it in the appropriate generality requires that we introduce an abstraction of the concept of a set-valued operad. If $\Cc$ is any category admitting a calculus whereby we can "substitute values into arguments" in $\Cc$, then there exists a corresponding, abstract version of the notion of an operad on $\Cc$. The Fulton-MacPherson construct takes any functor $F:\Cc\lra\Dc$ and, under suitable hypothesis on $\Cc$ and $F$, uses $F$ to build an abstract operad on $\Cc$.

\subsection{Outline of the text}
In Section \ref{abstract operads}, we introduce abstract operads in detail, and give several simple examples.

In Section \ref{FM construction}, we describe the Fulton-MacPherson construction, and we give some basic examples of abstract operads obtained from this construction. Our main result of Sections \ref{abstract operads} and \ref{FM construction} is Theorem \ref{FMop}, which provides conditions under which the Fulton-MacPherson construction produces an abstract, set-valued operad on $\Cc$.

In Section \ref{q grass}, we move on to more heavy-duty examples of abstract operads obtained from Fulton-MacPherson constructions. Our primary focus in Section \ref{q grass} is the study of algebraic varieties that represent these abstract operads.
Specifically, we show that many examples of abstract operads obtained from Fulton-MacPherson constructions are represented by a particular type of projective $\Bbbk$-variety called a {\em quiver Grassmannian}. The simplest example of a quiver Grassmannian is any classical Grassmannian. More generally, a quiver Grassmannian is the reduced moduli space parametrizing subrepresentations of a given representation of a given quiver. In Section \ref{q grass} we include a review of quivers, their representations, and of quiver Grassmannians for readers less familiar with this theory.

Each of the Chen-Gibney-Krashen operads $\mathrm{CGK}_{d}$ is merely the "rank-1" connected component of a much larger operad $\mathrm{FM}_{V}$ in projective $\Bbbk$-varieties. This larger operad $\mathrm{FM}_{V}$ represents a set-valued operad obtained via a particular Fulton-MacPherson construction. We realize each variety $\mathrm{FM}_{V}(n)$ as a finite disjoint union of quiver Grassmannians.
Our main results in Section \ref{q grass} are Theorem \ref{representability}, which formally states this representability. We also give other examples of quiver Grassmannians representing abstract operads obtained from Fulton-MacPherson constructions, and we indicate how to alter the proof of Theorem \ref{representability} in these cases.

\vskip 2cm

\section{Abstract operads}\label{abstract operads}

\vskip .5cm

\subsection{Notation.}
Given a category $\mathcal{X}$, we let
$\mathcal{X}_{\non{iso}}\sub\mathcal{X}$
denote the category with the same objects as $\mathcal{X}$, but with morphisms consisting only of the isomorphisms in $\mathcal{X}$.

\subsection{Abstract operads.}

Fix a symmetric monoidal category
$(\mathcal{M},\tensor,\mathds{1},\alpha,\tau)$,
with associator
 $\alpha:(M_1\tensor M_2)\tensor M_3\stackrel\sim\longrightarrow M_1\tensor(M_2\tensor M_3)$ 
and symmetry transformation
$\tau: M_1\tensor M_2\stackrel\sim\longrightarrow M_2\tensor M_1$.

Let $\mathcal{C}$ be a category containing a terminal object $\ast$.
We refer to $\mathcal{C}$ as our {\em input category}, and we refer to any commutative square in $\Cc$ of the form
$$\xymatrix{
C_2\ar[r]\ar[d]
&C_3\ar[d]\cr
\ast\ar@{^{(}->}_{c_{1}}[r]
&C_1
}$$
as an {\em acute} square.

An acute square in
$\mathcal{C}$
that happens to be bicartesian is the categorical version of a substitution of the object $C_2$ for the point $c_1$ in the object $C_1$.
For arbitrary sets $I$ and $J$,
the substitution of $J$ for an element $i$ in $I$ is the fundamental operation underlying the structure of any operad. This leads us to the following abstraction:

\begin{definition}\label{absop}
{\bf (Abstract operad)}
An {\em abstract} $\mathcal{M}$-{\em valued operad on} $\mathcal{C}$ is a datum consisting of:
\begin{itemize}
\item[{\bf (i)}]
A functor $\mathcal{P}:\isop{\Cc}\longrightarrow \mathcal{M}$;
\item[{\bf (ii)}]
An assignment $\gamma$ that associates, to each acute bicartesian square
$$\xymatrix{
\ar@{}[dr]|{\non{(a)}}
C_2\ar[r]\ar[d]
&C_3\ar[d]\cr
\ast\ar@{^{(}->}[r]
&C_1
}$$
in $\mathcal{C}$, a morphism
$\gamma_{\non{(a)}}:\mathcal{P}(C_1)\tensor\mathcal{P}(C_2)\longrightarrow\mathcal{P}(C_3)$
in $\mathcal{M}$.
\end{itemize}
This datum must satisfy three axioms:

\vskip .2cm

{\bf OP-1 (Equivariance)} For each isomorphism between bicartesian squares (a) and (b) in
$\mathcal{C}$, given by a commutative diagram
$$
\xy
(0,0)*+{\ast}="*c";
(30,10)*+{\ast}="*d";
(0,20)*+{C_2}="c2";
(20,20)*+{C_3}="c3";
(20,0)*+{C_1}="c1";
(30,30)*+{D_2}="d2";
(50,30)*+{D_3}="d3";
(50,10)*+{D_1}="d1";
(10,10)*+{{}_{\non{(a)}}}="(a)";
(42,18)*+{{}_{\non{(b)}}}="(b)";
{\ar@{_{(}->} "*c";"c1"};
{\ar@{->} "c2";"*c"};
{\ar@{->} "c2";"c3"};
{\ar@{->} "c3";"c1"};
{\ar@{^{(}->} "*d";"d1"};
{\ar@{->}|(.34){\hole} "d2";"*d"};
{\ar@{->} "d2";"d3"};
{\ar@{->} "d3";"d1"};
{\ar@{=}|(.675){\hole} "*c";"*d"};
{\ar@{-->}^{f_2} "c2";"d2"};
{\ar@{-->}_{f_3} "c3";"d3"};
{\ar@{-->}_{f_1} "c1";"d1"};
\endxy
$$
in $\Cc$, with $f_1$, $f_2$, and $f_3$ isomorphisms,
the corresponding diagram in
$\mathcal{M}$, namely
$$\xymatrix{
\mathcal{P}(C_1)\tensor\mathcal{P}(C_2)\ar[r]^-{\gamma_{\non{(a)}}}
&\mathcal{P}(C_3)\cr
\mathcal{P}(D_1)\tensor\mathcal{P}(D_2)\ar[r]_-{\gamma_{\non{(b)}}}
\ar[u]^-{\mathcal{P}(f_1)\tensor\mathcal{P}(f_2)}
&\mathcal{P}(D_3)
\ar[u]_-{\mathcal{P}(f_3)},
}
\ \ \ \ \ \ \ \ \ \ \ \ 
$$
is commutative.

\vskip .2cm

{\bf OP-2 (Associativity)}
For each commutative diagram
$$\xymatrix{
C_3\ar[r]\ar[d]
&C_{23}\ar[r]\ar[d]
&C_{123}\ar[d]\\
\ast\ar@{^{(}->}[r]
&C_2\ar[r]\ar[d]
&C_{12}\ar[d]\\
\ &\ast\ar@{^{(}->}[r]
&C_1
}
\ \ \ \ \ \ 
$$
in
$\mathcal{C}$,
consisting entirely of bicartesian squares,
the compositions
$\gamma_{\non{(a)}}$,
$\gamma_{\non{(b)}}$,
$\gamma_{\non{(c)}}$,
and
$\gamma_{\non{(d)}}$
associated to the acute bicartesian squares (a), (b), (c), and (d) in
$$
\begin{array}[c]{lcr}
\vcenter{\vbox{
\xymatrix{
C_3\ar[rr]\ar[d]\ar@{}[rrd]|{\non{(b)}}
&&C_{123}\ar[d]\\
\ast\ar@{^{(}->}[r]
&\ar@{}[dr]|{\non{(a)}}
C_2\ar[r]\ar[d]
&C_{12}\ar[d]\\
\ &\ast\ar@{^{(}->}[r]
&C_1}
}}
&
\mathrm{\ \ \ \ \ \ \ \ \ \ and\ \ \ \ \ \ \ \ \ \ }
&
\vcenter{\vbox{
\xymatrix{
\ar@{}[rd]|{\non{(c)}}
C_3\ar[r]\ar[d]
&C_{23}\ar[r]\ar[d]
\ar@{}[rdd]|{\non{(d)}}
&C_{123}\ar[dd]\\
\ast\ar@{^{(}->}[r]
&C_2\ar[d]&&
\\
\ &\ast\ar@{^{(}->}[r]
&C_1
}
}}
\end{array}
$$
fit into the following commutative diagram in $\mathcal{M}$:
$$\xymatrix{
\Big(\mathcal{P}(C_1)\tensor\mathcal{P}(C_2)\Big)\tensor\mathcal{P}(C_3)
\ar[rr]^{\ \ \ \ \ \ \gamma_{\non{(a)}}\tensor\non{id}}
\ar[dd]_{\alpha}
&&
\mathcal{P}(C_{12})\tensor\mathcal{P}(C_3)
\ar[rd]^{\gamma_{\non{(b)}}}\\
&&&\mathcal{P}(C_{123})
\\
\mathcal{P}(C_1)\tensor\Big(\mathcal{P}(C_2)\tensor\mathcal{P}(C_3)\Big)
\ar[rr]_{\ \ \ \ \ \ \non{id}\tensor\gamma_{\non{(c)}}}
&&
\mathcal{P}(C_1)\tensor\mathcal{P}(C_{23})
\ar[ru]_{\gamma_{\non{(b)}}}
}$$

\vskip .2cm

{\bf OP-3 (Locality)}
Consider any eight-faced polyhedral diagram in
$\mathcal{C}$,
glued from commutative diagrams
$$
\vcenter{\vbox{
\xy
(21,42)*+{A}="a";
(0,21)*+{B_{1}}="b1";
(42,21)*+{B_{2}}="b2";
(21,0)*+{D}="d";
(16,26)*+{C_{1}}="c1";
(27,15)*+{\ast}="*";
(16,16)*+{{}_{\non{(a)}}}="(a)";
(27,26)*+{{}_{\non{(d)}}}="(d)";
{\ar@{->} "a";"b1"};
{\ar@{->} "a";"b2"};
{\ar@{->} "b1";"d"};
{\ar@{->} "b2";"d"};
{\ar@{->} "c1";"a"};
{\ar@{->} "c1";"b1"};
{\ar@{->} "c1";"*"};
{\ar@{^{(}->} "*";"b2"};
{\ar@{_{(}->}_{d_1} "*";"d"}
\endxy
}}
\ \ \ \ \ \ \ \ \ \ \ \ 
\mathrm{and}
\ \ \ \ \ \ \ \ \ \ \ \ 
\vcenter{\vbox{
\xy
(21,42)*+{A}="a";
(0,21)*+{B_{1}}="b1";
(42,21)*+{B_{2}}="b2";
(21,0)*+{D}="d";
(26,26)*+{C_{2}}="c2";
(15,15)*+{\ast}="*";
(26,15)*+{{}_{\non{(c)}}}="(c)";
(15,26)*+{{}_{\non{(b)}}}="(b)";
{\ar@{->} "a";"b1"};
{\ar@{->} "a";"b2"};
{\ar@{->} "b1";"d"};
{\ar@{->} "b2";"d"};
{\ar@{->} "c2";"a"};
{\ar@{->} "c2";"b2"};
{\ar@{->} "c2";"*"};
{\ar@{_{(}->} "*";"b1"};
{\ar@{^{(}->}^{d_2} "*";"d"}
\endxy
}}$$
by identifying their boundary squares.
Suppose that
$d_1\not= d_2$,
and that the squares (a), (b), (c), and (d) are bicartesian.
Then the corresponding diagram in
$\mathcal{M}$, namely
$$\xymatrix{
\Big(\mathcal{P}(D)\tensor\mathcal{P}(C_1)\Big)\tensor\mathcal{P}(C_2)
\ar[rr]^{\ \ \ \ \ \ \gamma_{\non{(a)}}\tensor\non{id}}
\ar[dd]_{\alpha^{-1}\circ\tau\circ\alpha}
&&
\mathcal{P}(B_1)\tensor\mathcal{P}(C_2)
\ar[rd]^{\gamma_{\non{(b)}}}\\
&&&\mathcal{P}(A)
\\
\Big(\mathcal{P}(D)\tensor\mathcal{P}(C_2)\Big)\tensor\mathcal{P}(C_1)
\ar[rr]_{\ \ \ \ \ \ \ \gamma_{\non{(c)}}\tensor\non{id}}
&&
\mathcal{P}(B_2)\tensor\mathcal{P}(C_1)
\ar[ru]_{\gamma_{\non{(b)}}}
}$$
must be commutative.
\end{definition}

\vskip .2cm

\begin{definition}
{\bf (Morphisms and categories of abstract operads.)}
If
$\mathcal{P}$
and
$\mathcal{Q}$
are abstract
$\mathcal{M}$-valued operads on
$\mathcal{C}$,
with respective composition assignments
$\gamma$ and $\delta$,
then a {\em morphism of operads}
$\varphi:\mathcal{P}\longrightarrow\mathcal{Q}$
is any natural transformation
$\varphi:\mathcal{P}\Longrightarrow\mathcal{Q}$
that preserves composition,
in the sense that for each acute bicartesian square (a) in
$\mathcal{C}$,
the corresponding diagram
$$\xymatrix{
\mathcal{P}(C_1)\tensor\mathcal{P}(C_2)
\ar[d]_-{\varphi_{C_1}\tensor\varphi_{C_2}}
\ar[r]^-{\gamma_{\non{(a)}}}
&\mathcal{P}(C_3)\ar[d]^-{\varphi_{C_3}}
\cr
\mathcal{Q}(C_1)\tensor\mathcal{Q}(C_2)\ar[r]_-{\delta_{\non{(a)}}}
&\mathcal{Q}(C_3),
}$$
in
$\mathcal{M}$
commutes.

We let $\bold{Op}(\Cc,\Mc)$
denote the category of $\Mc$-valued operads on $\Cc$.

\end{definition} 

\vskip .2cm

\begin{definition}
{\bf (Right module over an abstract operad.)}
If $\mathcal{P}$
is an abstract
$\mathcal{M}$-valued operad on
$\mathcal{C}$,
then a {\em right} $\mathcal{P}$-{\em module}
is a functor
$\mathcal{R}:\isop{C}\longrightarrow \mathcal{M}$
that associates a morphism
$\varrho_{\non{(a)}}:\mathcal{R}(C_1)\tensor\mathcal{P}(C_2)\longrightarrow\mathcal{R}(C_3)$
in $\mathcal{M}$
to each acute bicartesian square (a) in $\mathcal{C}$,
satisfying the obvious analogs of axioms OP-1 through OP-3 above.

A right $\mathcal{P}$-module $\mathcal{R}$ is {\em constant}
if, for some fixed object $R$ in $\mathcal{M}$,
we have $\mathcal{R}(C)=R$ for every object $C$ in $\mathcal{C}$,
such that $\mathcal{R}$ takes every morphism in $\isop{\mathcal{C}}$ to the identity on $R$.

We define morphisms $\varphi:\mathcal{R}_1\longrightarrow\mathcal{R}_2$ between right $\mathcal{P}$-modules in the obvious way.
We let $\bold{Mod}_\mathcal{P}$ denote the category of right $\mathcal{P}$-modules, and we let $\mathcal{M}_\mathcal{P}$ denote the category of constant right $\mathcal{P}$-modules.

\end{definition}

\vskip .2cm

\subsection{Examples} We now list some immediate examples of abstract operads.

\vskip .2cm

\begin{example}

{\bf (Classical operads.)}

In the category $\mathcal{C}=\fnS$ of nonempty finite sets, every acute bicartesian diagram is isomorphic to one of the form
$$\xymatrix{
J\ar@{^{(}->}[r]\ar@{->>}[d]
&J\sqcup(I-\{ i \})\ar@{->>}[d]^{p}\cr
\bold{*}\ar@{^{(}->}[r]_i
&I
}$$
where the function $p$ identifies $I-\{ i \}$ with the corresponding subset of $I$, and maps the whole subset $J$ to the element $i\in I$.
Thus composition in any abstract operad
$\mathcal{P}:\isop{\fnS}\rightarrow\mathcal{M}$
takes its complete determination from a family morphisms
$$\gamma_i:\mathcal{P}(I)\tensor\mathcal{P}(J)\rightarrow\mathcal{P}(J\sqcup(I-\{ i \}))$$
and our Definition \ref{absop} recovers the definition of a classical,
$\mathcal{M}$-valued operad, as axiomatized by Markl in \cite{M}.

In terms of the analogy that exists between rooted trees and operads, axiom {\bf OP-2} in this case corresponds to the fact that the order in which we graft any three trees
$$
\xy
(-1,11)*+{{}_{i}};
(4,21)*+{{}_{j}};
(0,5)*=0{}; (-5,10)*=0{} **@{-},
(0,5)*=0{}; (-2.5,10)*=0{} **@{-},
(0,5)*=0{}; (0,10)*=0{} **@{-},
(0,5)*=0{}; (2.5,10)*=0{} **@{-},
(0,5)*=0{}; (5,10)*=0{} **@{-},
(-2.5,16)*=0{}; (-7.5,21)*=0{} **@{-},
(-2.5,16)*=0{}; (-5,21)*=0{} **@{-},
(-2.5,16)*=0{}; (-2.5,21)*=0{} **@{-},
(-2.5,16)*=0{}; (0,21)*=0{} **@{-},
(-2.5,16)*=0{}; (2.5,21)*=0{} **@{-},
(2.5,27)*=0{}; (-2.5,32)*=0{} **@{-},
(2.5,27)*=0{}; (0,32)*=0{} **@{-},
(2.5,27)*=0{}; (2.5,32)*=0{} **@{-},
(2.5,27)*=0{}; (5,32)*=0{} **@{-},
(2.5,27)*=0{}; (7.5,32)*=0{} **@{-},
(0,1.5)*=0{\ \!\!\ \!\!\ \!\!\ \!\!\downarrow}; (0,5)*=0{} **@{-},
(2.5,23.5)*=0{\ \!\!\ \!\!\ \!\!\ \!\!\downarrow}; (2.5,27)*=0{} **@{-},
(-2.5,12.5)*=0{\ \!\!\ \!\!\ \!\!\ \!\!\downarrow}; (-2.5,16)*=0{} **@{-},
(0,5)*+{\btd};
(2.5,27)*+{\btd};
(-2.5,16)*+{\btd};
\endxy
$$
does not effect the resulting tree, and axiom {\bf OP-3} corresponds to fact that the order in which we graft any three trees
$$
\xy
(-7,10)*+{{}_{i}};
(7,10)*+{{}_{j}};
(0,1.5)*=0{\ \!\!\ \!\!\ \!\!\ \!\!\downarrow}; (0,5)*=0{} **@{-},
(0,5)*=0{}; (-5,10)*=0{} **@{-},
(0,5)*=0{}; (-2.5,10)*=0{} **@{-},
(0,5)*=0{}; (0,10)*=0{} **@{-},
(0,5)*=0{}; (2.5,10)*=0{} **@{-},
(0,5)*=0{}; (5,10)*=0{} **@{-},
(-5,12.5)*=0{\ \!\!\ \!\!\ \!\!\ \!\!\downarrow}; (-5,16)*=0{} **@{-},
(-5,16)*=0{}; (-6,21)*=0{} **@{-},
(-5,16)*=0{}; (-8,21)*=0{} **@{-},
(-5,16)*=0{}; (-4,21)*=0{} **@{-},
(-5,16)*=0{}; (-2,21)*=0{} **@{-},
(5,12.5)*=0{\ \!\!\ \!\!\ \!\!\ \!\!\downarrow}; (5,16)*=0{} **@{-},
(5,16)*=0{}; (6,21)*=0{} **@{-},
(5,16)*=0{}; (8,21)*=0{} **@{-},
(5,16)*=0{}; (4,21)*=0{} **@{-},
(5,16)*=0{}; (2,21)*=0{} **@{-},
(0,5)*=0{\btd};
(-5,16)*=0{\btd};
(5,16)*=0{\btd}
\endxy
$$
does not effect the resulting tree.

We also have natural variants where $\mathcal{C}=\non{F\Sets}$,
the category of finite sets.
For example, if we fix a commutative ring $R$ and let
$\mathcal{M}=R$-$\bold{Mod}$, the category of $R$-modules equipped with its Cartesian monoidal structure,
then the operad
$$\mathcal{C}om_R:\fnS_{\non{iso}}^{\non{op}}\longrightarrow R\textendash\bold{Mod}$$
of commutative rings has a natural extension
$$\mathcal{C}om_R:{\non{F\Sets}}_{\non{iso}}^{\non{op}}\longrightarrow R\textendash\bold{Mod}$$
given by setting $\mathcal{C}om_R({\O})=R$,
and associating, to each acute bicartesian square
$$\xymatrix{
{\O}\ar@{^{(}->}[r]\ar@{^{(}->}[d]
&{I-\{ i \}}\ar@{^{(}->}[d]\cr
\bold{*}\ar@{^{(}->}[r]_i
&I
}$$
in $\non{F\Sets}$, the composition
$$R\tensor_{R}\mathcal{C}om_R(I)\longrightarrow\mathcal{C}om_R({I-\{ i \}})$$
that substitutes each scalar $r\in R$ into the $i^{\non{th}}$-place of the $I$-ary operation that generates
$\mathcal{C}om_R(I)$.

\end{example}

\vskip .2cm

\begin{example}

{\bf (Terminal operads and Grothendieck semigroups.)}
Every category $\mathcal{C}$ with terminal object $\ast$ admits at least one operad, namely the {\em terminal operad}
$\mathcal{K}_\mathcal{C}:\mathcal{C}_{\non{iso}}^{\non{op}}\longrightarrow \Sets$,
which takes each object $C$ in $\mathcal{C}$ to the set
$\{ [C] \}$
containing only one element $[C]$.

Let $\mathcal{C}$ be an essentially small abelian category
$\mathcal{C}=\mathcal{A}$,
so that the collection
$$
\non{ob}(\mathcal{A})/\non{iso}
$$
of isomorphism classes of objects in
$\mathcal{A}$
is a set.
Then we can form the {\em Grothendieck semigroup} ${K}_0^{+}(\mathcal{A})$,
defined as the quotient of the free semigroup
$\non{Fr}(\non{ob}(\mathcal{A})/\non{iso})$
by the relation "$\sim$" whereby
$[A_1]+[A_2]\sim [A_3]$
whenever there exists a short exact sequence
$$0\longrightarrow A_2\longrightarrow A_3\longrightarrow A_1\rightarrow 0$$
in $\mathcal{A}$.
Since an acute bicartesian square in $\mathcal{A}$
is the same thing as short exact sequence in $\mathcal{A}$,
the associativity axiom {\bf OP-2} implies that the category
$\Sets_{\mathcal{K}_\mathcal{A}}$
of constant right-modules for the terminal operad
$\mathcal{K}_\mathcal{A}$
is equivalent to the category of sets equipped with a right
${K}_0^{+}(\mathcal{A})$-action.

In this way, the terminal operad
$\mathcal{K}_\mathcal{C}$
on an arbitrary category
$\mathcal{C}$
(with terminal object $\ast$)
can be seen as a kind of "nonabelian Grothendieck semigroup" of
$\mathcal{C}$.
Since every set-valued operad
$\mathcal{P}:\isop{\Cc}\longrightarrow \Sets$
admits a unique morphism
$\mathcal{P}\longrightarrow\mathcal{K}_\mathcal{C}$,
this turns the theory of abstract set-valued operads on
$\mathcal{C}$
into a complicated type of nonabelian
$K_{0}^{+}$-theory for
$\mathcal{C}$.

\end{example}

\vskip .2cm

\begin{example}

{\bf (Charades.)}
If we let our input category $\Cc$ be a pointed category,
that is, a category in which the terminal object $\ast$ is also an initial object,
then axiom {\bf OP-3} for any abstract operad
$\mathcal{P}:\isop{\Cc}\longrightarrow\mathcal{M}$
becomes vacuous, since the hypothesis $d_1\not=d_2$ of {\bf OP-3} can never be satisfied in a pointed category.

This happens, for instance, when $\Cc=\Ac$ is any abelian category.
When $\mathcal{M}=\bold{Vect}_{\Bbbk}$, the category of finite-dimensional vector spaces over some fixed ground field $\Bbbk$, equipped with its Cartesian monoidal structure, the remaining axioms {\bf OP-1} and {\bf OP-2} recover the axioms of a $\Bbbk$-{\em linear charade} on $\mathcal{A}^\mathrm{op}$, as defined by M. Kapranov in \cite{K}.
Note that our abstract operads on $\Ac$ become charades on $\Ac^\mathrm{op}$, and vice versa.
More generally, we can consider charades taking values in any symmetric monoidal category $\Mc$. Let us briefly mention two examples:

Fix a finite field $\Bbbk=\mathbb{F}_q$. Let our input category be $\mathcal{C}=\non{F\bold{Vect}}_\Bbbk^{\non{op}}$,
and let $\mathcal{M}=\non{F\bold{Vect}}_\Bbbk$.
For each finite dimensional $\Bbbk$-vector space $V$,
let $\non{Bld}_{\bullet}(V)$
denote the Tits building associated to $V$.
This is a simplicial set with set of $n$-simplices
$$\non{Bld}_{n}(V)=\{\non{towers}\ W_0\sub\cdots\sub W_n\ \non{of\ proper\ non\textendash zero\ subspaces\ of}\ V \}.$$
Then $V$'s {\em Steinberg module} is the $\Bbbk$-vector space
$\mathcal{S}t(V)=H_{\non{dim}(V)-1}(\non{Bld}_{\bullet}(V),\Bbbk)$.
It constitutes a functor
$\mathcal{S}t:(\non{F\bold{Vect}}_\Bbbk^{\non{op}})_{\non{iso}}^{\non{op}}=\non{F\bold{Vect}}_\Bbbk\longrightarrow\non{F\bold{Vect}}_\Bbbk$,
and comes with a naturally defined composition that turns it into a charade. See \cite[section 3.3]{K} for details.

For another example, 
fix a scheme $X$,
let $\mathcal{C}=\bold{Bun}_X$
be the category of locally free, finite rank
$\mathcal{O}_X$-modules,
and let $\mathcal{M}=\bold{Pic}_X$
be the Picard category of $X$.
Let
$\mathcal{D}et:(\bold{Bun}_X)_{\non{iso}}\longrightarrow\bold{Pic}_X$
be the functor that, at each locally free $\mathcal{O}_X$-module $\mathcal{B}$, returns the line bundle
$\mathcal{D}et(\mathcal{B})$
given by the top exterior power of $\mathcal{B}$.
Then $\mathcal{D}et$ is an example of a charade.

\end{example}

\vskip .7cm

\vfill\eject

\section{The Fulton-MacPherson construction.}\label{FM construction}

\vskip .5cm

\subsection{Notation and terminology.}
If $\mathcal{X}$ is an arbitrary category and $X$ is any object in $\mathcal{X}$,
let $\bold{Mon}_X\mathcal{X}$
denote the category whose objects are monomorphisms
$Y\hookrightarrow X$
in $\mathcal{X}$,
and whose arrows are morphisms
$Y_1\hookrightarrow Y_2$
over $X$ (necessarily monomorphisms).
We refer to $\bold{Mon}_X\mathcal{X}$
as the {\em poset of subobjects of} $X$.

Similarly, let
$\bold{Epi}^{X}\mathcal{X}$
denote the category whose objects are epimorphisms
$X\lra\!\!\!\!\rightarrow Y$
in $\mathcal{X}$,
and whose arrows are morphisms
$Y_1\lra\!\!\!\!\rightarrow Y_2$
under $X$ (necessarily epimorphisms).

\subsection{The Fulton-MacPherson Construction.} 
Suppose given a functor $F:\mathcal{C}\longrightarrow\mathcal{D}$,
running from our input category $\mathcal{C}$
to an arbitrary category $\mathcal{D}$.
Then the {\em Fulton}-{\em MacPherson construction}
for $F$ proceeds as follows:

At each object $C$ in $\mathcal{C}$, form the poset 
$\bold{Mon}_C\mathcal{C}$
of subobjects of $C$.
This category comes with a forgetful functor
$U_C:\bold{Mon}_C\mathcal{C}\longrightarrow\mathcal{C}$
that takes each object
$B\hookrightarrow C$
in $\bold{Mon}_C\mathcal{C}$
to the object $B$ in $\mathcal{C}$.
Let $F_C:\bold{Mon}_C\mathcal{C}\longrightarrow\mathcal{D}$
denote the composite
$$F_C:\ \ \bold{Mon}_C\mathcal{C}\xrightarrow{\ \ U_{C}\ }\mathcal{C}\xrightarrow{\ \ F\ \ }\mathcal{D}$$
so that $F_{C}(B\hookrightarrow C)=F(B)$.
This composite $F_C$ is an object in the functor category
$[\bold{Mon}_C\mathcal{C},\mathcal{D}]$,
and we can therefore consider the totality of epimorphisms
$F_C\Longrightarrow\!\!\!\!\!\!\!\!\Longrightarrow E$ in $[\bold{Mon}_C\mathcal{C},\mathcal{D}]$.
These epimorphisms form a category, namely
$$\bold{Epi}^{F_C}[\bold{Mon}_C\mathcal{C},\ \mathcal{D}]$$
Without additional assumptions on $\Cc$ or $F$, we have no guarantee that the isomorphism classes of objects in this last category
$\bold{Epi}^{F_C}[\bold{Mon}_C\mathcal{C},\ \mathcal{D}]$
will form an actual set. If and when they do form an honest set, we denote this set
$\mathcal{F\! M}_F(C)$. Thus
$$\mathcal{F\! M}_F(C)\ \ =\ \ \non{ob}\left(\bold{Epi}^{F_C}[\bold{Mon}_C\mathcal{C},\ \mathcal{D}]\right)/\non{isomorphism}$$

Now suppose that $C$ and $C'$
are objects in $\mathcal{C}$
that both return honest sets
$\mathcal{F\! M}_F(C)$
and
$\mathcal{F\! M}_F(C')$,
and suppose that
$a:C\xrightarrow{\ \sim\ }C'$
is an isomorphism in $\mathcal{C}$.
Then $a$
induces an equivalence of categories
$a_*:\bold{Mon}_C\mathcal{C}\xrightarrow{\ \sim\ }\bold{Mon}_{C'}\mathcal{C}$,
which takes each object
$B\hookrightarrow C$
in $\bold{Mon}_C\mathcal{C}$
to the object
$B\hookrightarrow C\xrightarrow{\sim}C'$
in
$\bold{Mon}_{C'}\mathcal{C}$.
Since $F_{C'}{}_{{}^{{\ }^\circ}}a_*=F_C$ (on the nose),
this equivalence $a_\ast$ induces a functor
$$a^*:\ \ \bold{Epi}^{F_{C'}}[\bold{Mon}_{C'}\mathcal{C},\ \mathcal{D}]
\ \longrightarrow\ 
\bold{Epi}^{F_{C}}[\bold{Mon}_{C}\mathcal{C},\ \mathcal{D}]$$
that takes each epimorphism
$F_{C'}\Longrightarrow\!\!\!\!\!\!\!\!\Longrightarrow E$
in $[\bold{Mon}_{C'}\mathcal{C},\ \mathcal{D}]$
to the epimorphism
$F_{C}\Longrightarrow\!\!\!\!\!\!\!\!\Longrightarrow Ea_*$
in $[\bold{Mon}_{C}\mathcal{C},\ \mathcal{D}]$.

This means that if every single object $C$ in $\mathcal{C}$
returns an honest set
$\mathcal{F\! M}_F(C)$,
then we have a functor
$$\mathcal{F\! M}_F:\mathcal{C}_{\non{iso}}^{\non{op}}\longrightarrow\Sets.$$
When this functor is well defined, we call it the {\em Fulton-MacPherson functor}
determined by $F$. When
$\mathcal{C}=\mathcal{D}$
and
$F=\non{Id}_C$,
we denote the Fulton-MacPherson functor simply
$\mathcal{F\! M}:\mathcal{C}_{\non{iso}}^{\non{op}}\longrightarrow\Sets$

In Theorem \ref{FMop} below, we provide conditions on $\Cc$ and $F$ which guarantee not only that the Fulton-MacPherson construction produces an honest functor
$\mathcal{F\! M}_F$,
but that this functor comes with the natural structure of a set-valued operad on
$\mathcal{C}$.

Let us first look at a couple of examples:

\vskip .2cm

 \vskip .2cm

\begin{example}\label{ExA}

{\bf (The terminal operad.)}
Assume that $\mathcal{C}$ has a terminal object $\ast$,
and let $\mathcal{D}=\bold{Pt}$
be the {\em terminal category},
with only one object $\bold{1}$
and only one arrow $\non{id}_\bold{1}:\bold{1}\longrightarrow\bold{1}$.
Then there exists only one functor
$F=\non{pt}:\mathcal{C}\longrightarrow\bold{Pt}$.
Applying the Fulton-MacPherson construction to this functor,
we recover the terminal operad on $\Cc$:
$$\mathcal{F\! M}_{\non{pt}}\cong\mathcal{K}_\mathcal{C}$$

\end{example}

\vskip .2cm

\begin{example}\label{ExB}
{\bf (Decreasing endomorphisms of posets of subobjects.)}
Fix a ground field $\Bbbk$,
let $\mathcal{C}=\mathcal{D}=\non{\textsf{F}\bold{Vect}}_\Bbbk$,
and let
$F=\non{Id}:\non{\textsf{F}\bold{Vect}}_\Bbbk\longrightarrow\non{\textsf{F}\bold{Vect}}_\Bbbk$.
Since the condition of being an epimorphism is determined componentwise for natural transformations between
$\non{\textsf{F}\bold{Vect}}_\Bbbk$-valued
functors,
the Fulton-MacPherson functor
$\mathcal{F\! M}:(\non{\textsf{F}\bold{Vect}}_\Bbbk)_{\non{iso}}^{\non{op}}\longrightarrow\Sets$
is well defined in this case.

Consider a finite dimensional $\Bbbk$-vector space
$V$,
and let $\non{Sub}(V)$ denote the small poset of actual subspaces of $V$.
Kernel/cokernel duality in
$\non{\textsf{F}\bold{Vect}}_\Bbbk$
identitifes each element
$$\pi:\non{Id}_{\bold{Mon}_V(\non{\textsf{F}\bold{Vect}}_\Bbbk)}\Longrightarrow\!\!\!\!\!\!\!\!\Longrightarrow E$$
of
$\mathcal{F\! M}(V)$
with the decreasing endomorphism
$\varepsilon_\pi:\non{Sub}(V)\lra\non{Sub}(V)$
that takes each subspace
$W\sub V$
to the kernel $\varepsilon_{\pi}(W)\sub W$ of the component $\pi_{W}:W
\lra\!\!\!\!\rightarrow E(W)$ of $\pi$.
In this way, we get an isomorphism
$$
\mathcal{F\! M}(V)
\ \cong\ 
\left\{
	\begin{array}{l}
		\mathrm{decreasing\ endomorphisms}
		\\
		\ \mathrm{of\ the\ small\ poset\ Sub}(V)
	\end{array}
\right\}
$$

This Fulton-MacPherson functor
$\mathcal{F\! M}:(\non{\textsf{F}\bold{Vect}}_\Bbbk)_{\non{iso}}^{\non{op}}\longrightarrow\Sets$
admits a composition assignment that turns it into an abstract operad.
Specifically, given any short exact sequence
$$
\non{(a)}:\ \ \ \ 0\longrightarrow V_2\longrightarrow V\xrightarrow{\ \ p\ }V_1\longrightarrow 0\ \ \ \ \ \ \ \ 
$$
in $\FVect{\Bbbk}$,
let
$$
\gamma_{\non{(a)}}:\ \mathcal{F\! M}(V_1)\times\mathcal{F\! M}(V_2)\longrightarrow\mathcal{F\! M}(V)
$$
be the map taking each pair $(\varepsilon_1,\ \varepsilon_2)$,
consisting of decreasing endomorphisms 
$\varepsilon_1$ of $\non{Sub}(V_1)$
and
$\varepsilon_2$ of $\non{Sub}(V_2)$,
to the decreasing endomorphsim
$\gamma_{\non{(a)}}(\varepsilon_1,\ \varepsilon_2)$
of $\non{Sub}(V)$,
which, at each subspace $W\sub V$, returns
$$
\gamma_{\non{(a)}}(\varepsilon_1,\ \varepsilon_2)(W)=
\left\{
	\begin{array}{ll}
		\ \ \ \ \ \ \ \varepsilon_2(W)  & \mbox{if} \ \ W\sub V_2 \\
		W\cap p^{-1}\varepsilon_1(pW) & \mbox{if} \ \ W\nsubseteq V_2
	\end{array}
\right.
$$
One can verify directly that this assignment $\gamma$ satisfies axioms
{\bf OP-1} and {\bf OP-2} (the remaining axiom {\bf OP-3} being vacuous).

\end{example}

\vskip .5cm

\subsection{Operadic structure.}\label{OpStruct}
The Fulton-MacPherson functor
$\mathcal{F\! M}_F:\mathcal{C}_{\non{iso}}^{\non{op}}\longrightarrow\Sets$,
when it is well defined, is "almost" an abstract operad;
it lacks only a composition assignment
$\gamma$
satisfying {\bf OP-1} through {\bf OP-3}.
The Fulton-MacPherson functors we saw in Examples \ref{ExA} and \ref{ExB}
above come with just such an assignment.

The proof of Theorem \ref{FMop} below gives a general construction of at least one rather natural composition assignment on
$\mathcal{F\! M}_F$ that turns $\mathcal{F\! M}_F$ into an abstract operad.
Theorem \ref{FMop} itself provides sufficient conditions on the category
$\mathcal{C}$
and on the functor
$F:\mathcal{C}\longrightarrow\mathcal{D}$
to guarantee that this construction works.

We call the resulting operad the {\em Fulton-MacPherson operad} associated to $F$.

The conditions of Theorem \ref{FMop}, whose terms we will define momentarily, are as follows:

\vskip .2cm

\begin{itemize}
\item[{\bf (\ref{OpStruct}-i)}]
$\mathcal{C}$ must be a {\em good input category};
\item[{\bf (\ref{OpStruct}-ii)}]
The functor $F:\mathcal{C}\longrightarrow\mathcal{D}$
must be {\em right-acute}, with {\em locally defined}, {\em locally small relations}
over $\mathcal{C}$.
\end{itemize}

\vskip .2cm

Before explaining the undefined terms appearing in conditions {\bf (\ref{OpStruct}-i)} and {\bf (\ref{OpStruct}-ii)}, let us formally state our Theorem:

\vskip .2cm

\begin{theorem}\label{FMop}
If conditions {\bf (\ref{OpStruct}-i)} and {\bf (\ref{OpStruct}-ii)} hold for
$\mathcal{C}$ and $F$, then:
\begin{itemize}
\item[{\bf (I)}]
The Fulton-MacPherson functor
$\mathcal{F\! M}_F:\mathcal{C}_{\non{iso}}^{\non{op}}\longrightarrow\Sets$
is well defined;
\item[{\bf (II)}]
There exists a composition assignment
$\gamma$
that turns
$\mathcal{F\! M}_F$
into an abstract operad on
$\mathcal{C}$.
\end{itemize}
\end{theorem}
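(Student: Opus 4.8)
The plan is to prove \textbf{(I)} directly and then to prove \textbf{(II)} by writing down one explicit composition assignment $\gamma$, modelled on the formula of Example~\ref{ExB}, and checking the three axioms by decomposing posets of subobjects into ``sectors''. For \textbf{(I)}: up to isomorphism under $F_C$, an epimorphism $F_C\Longrightarrow\!\!\!\!\!\!\!\!\Longrightarrow E$ in $[\Mon{C}{\Cc},\Dc]$ is determined by its associated congruence (kernel pair) on $F_C$ --- isomorphic epimorphisms share a congruence, and, since $\Cc$ being a good input category supplies enough colimits in $\Dc$, the quotient is reconstructed from the congruence. The hypothesis that $F$ has \emph{locally small relations over} $\Cc$ says precisely that for each $C$ these congruences form a small collection, so $\FM{F}(C)$ is an honest set; the functoriality of $C\mapsto\FM{F}(C)$ on $\isop{\Cc}$ was already established in Section~\ref{FM construction}, which completes \textbf{(I)}.

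For the composition assignment, fix an acute bicartesian square (a) as in Definition~\ref{absop}. Read as a pushout it exhibits $C_2\hookrightarrow C_3$ as a subobject with $C_3\to C_1$ the morphism collapsing $C_2$ onto $c_1$; read as a pullback it exhibits $C_2$ as the fibre of $C_3\to C_1$ over $c_1$. Hence a subobject $B\hookrightarrow C_3$ either factors through $C_2\hookrightarrow C_3$ or it does not, in which latter case I write $\bar B\hookrightarrow C_1$ for the image of $B\hookrightarrow C_3\to C_1$ (such images being available in a good input category). Given representatives $\pi_1:F_{C_1}\Longrightarrow\!\!\!\!\!\!\!\!\Longrightarrow E_1$ and $\pi_2:F_{C_2}\Longrightarrow\!\!\!\!\!\!\!\!\Longrightarrow E_2$ of classes in $\FM{F}(C_1)$ and $\FM{F}(C_2)$, I would build a functor $E_3$ on $\Mon{C_3}{\Cc}$ by
$$
E_3(B)\ =\ \begin{cases}
E_2(B) & \text{if }B\hookrightarrow C_3\text{ factors through }C_2,\\
E_1(\bar B) & \text{otherwise,}
\end{cases}
$$
with structure morphism $F(B)\to E_3(B)$ the component of $\pi_2$ in the first case and the composite $F(B)\to F(\bar B)\twoheadrightarrow E_1(\bar B)$ in the second. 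The hypothesis that $F$ is \emph{right-acute} --- that it carries acute bicartesian squares to pushout-like diagrams --- is exactly what forces these two prescriptions to agree along the overlap (subobjects meeting both $C_2$ and its complement), so that $E_3$ is a genuine functor and $\pi_3:F_{C_3}\Longrightarrow\!\!\!\!\!\!\!\!\Longrightarrow E_3$ a genuine epimorphism of functors; that $F$ has \emph{locally defined} relations lets me verify both the patching and the epimorphism condition one subobject at a time, and \emph{locally small} relations keep the isomorphism class of $\pi_3$ inside $\FM{F}(C_3)$. A short check shows $[\pi_3]$ is independent of the chosen representatives, so $\gamma_{\non{(a)}}([\pi_1],[\pi_2]):=[\pi_3]$ is well defined.

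It then remains to verify the axioms. \textbf{OP-1} is immediate, since $\pi_3$ is constructed naturally from $\pi_1$ and $\pi_2$. For \textbf{OP-2}, given the nested diagram of bicartesian squares, the subobjects of $C_{123}$ fall into three classes --- those factoring through $C_3$, those factoring through $C_{23}$ but not through $C_3$, and the remaining ones --- and on each class both bracketings of $\gamma$ restrict, by the construction above, to the same functor (respectively $E_3$, the pullback of $E_{23}$, and the pullback of $E_1$), while the associator $\alpha$ accounts for the reassociation of the three tensor factors. Axiom \textbf{OP-3} is parallel: when $d_1\neq d_2$ the subobjects of $A$ split according to whether they meet $C_1$, meet $C_2$, or neither; the two substitutions then act on disjoint classes, and $\alpha^{-1}\circ\tau\circ\alpha$ simply records the transposition of the $C_1$- and $C_2$-factors, so the two composites agree. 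This establishes \textbf{(II)}.

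I expect the main obstacle to be the middle step: showing that the glued $\pi_3$ really is an epimorphism in $[\Mon{C_3}{\Cc},\Dc]$, that its isomorphism class lies in $\FM{F}(C_3)$, and that it depends only on $[\pi_1]$ and $[\pi_2]$. This is the one point at which the hypotheses on $F$ are consumed, and all three are needed --- without right-acuteness $E_3$ need not even be a functor, without locally defined relations the local-to-global gluing breaks down, and without locally small relations one leaves $\Sets$. Once $\gamma$ is in hand the checks of \textbf{OP-1}--\textbf{OP-3} are bookkeeping, directly analogous to the tree-grafting identities for classical operads recalled above.
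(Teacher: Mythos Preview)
Your approach is essentially the paper's: the same two-case definition of $\gamma_{\non{(a)}}$ (use $\pi_2$ on subobjects factoring through $C_2$, otherwise push to the image in $C_1$ and use $\pi_1$) and the same three-case / two-case analyses for \textbf{OP-2} and \textbf{OP-3}. Two points, however, are misstated or glossed over.

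For \textbf{(I)}, your argument is garbled. That $\Cc$ is a good input category says nothing about colimits in $\Dc$, and ``locally small relations'' does \emph{not} assert that congruences on the functor $F_C$ form a small collection; it says only that each poset $\Epi{F(B)}{\Dc}$ is essentially small. The correct argument combines this with \textbf{GI-1} (so $\Mon{C}{\Cc}$ is essentially small) and with ``locally defined relations'' (so an epimorphism $F_C\Rightarrow E$ is determined componentwise): together these bound the isomorphism classes by a product of small sets over a small index.

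For \textbf{(II)}, the construction of $\gamma$ is correct, but the verification of \textbf{OP-2} and \textbf{OP-3} is not mere bookkeeping. In the third case of \textbf{OP-2} (when $B\hookrightarrow C_{123}$ does not factor through $C_{23}$), the two bracketings evaluate $\eta_1$ at $\non{Im}(B\to C_1)$ and at $\non{Im}\big(\non{Im}(B\to C_{12})\to C_1\big)$ respectively, and one must prove these two subobjects of $C_1$ agree. This is exactly the content of axiom \textbf{GI-3} and of an image-stability lemma the paper derives from it; the same issue recurs in \textbf{OP-3}. Your sketch elides precisely this step, which is where the good-input-category axioms beyond well-poweredness are actually consumed. (Also, there is no ``overlap'' between your two cases---they are disjoint; the genuine issue is functoriality of $E_3$ on an arrow $A\hookrightarrow B$ with $A$ in Case~1 and $B$ in Case~2, which the paper handles by producing a point of $\non{Im}(p|_B)$ and routing $E_3(f)$ through the terminal object of $\Dc$.)
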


\vskip .4cm

We now explain our conditions {\bf (\ref{OpStruct}-i)} and {\bf (\ref{OpStruct}-ii)}:

\vskip .5cm

\subsection{\bf Condition {\bf (\ref{OpStruct}-i)}: good input categories.}
If $\mathcal{C}$ is a category containing a terminal object $\ast$,
then let us refer to a morphism
$f:B\longrightarrow C$
as an {\em acute quotient}
in $\mathcal{C}$
if $f$ appears in at least one acute, bicartesian square
$$\xymatrix{
A\ar@{^{(}->}[r]\ar[d]
&B\ar[d]^{f}\cr
\ast\ar@{^{(}->}[r]
&C
}$$
We will tend to denote acute quotients as epimorphisms:
$B\lra\!\!\!\!\rightarrow C$.
In the categories that we will be interested in,
acute quotients will turn out to be epimorphisms,
so this won't lead to any confusion.

Recall that an {\em image factorization}
of a given morphism
$f:B\longrightarrow C$ in $\mathcal{C}$
is any factorization
$B\rightarrow\non{Im}(f)\hookrightarrow C$
of $f$
through a monomorphism,
such that this factorization is {\em minimal}
in the sense that for any other factorization
$B\rightarrow C'\hookrightarrow C$
of $f$ through a monomorphism,
there exists a morphism $\mathrm{Im}(f)\lra C'$
for which the diagram
$$
\xy
(0,0)*+{B}="B";
(20,7)*+{\mathrm{Im}(f)}="Im";
(20,-7)*+{\ C'\ }="C0";
(40,0)*+{C}="C";
{\ar@{->} "B"; "Im"};
{\ar@{->} "B"; "C0"};
{\ar@{^{(}->} "Im"; "C"};
{\ar@{^{(}->} "C0"; "C"};
{\ar@{->} "Im"; "C0"};
\endxy
$$
commutes.
Commutativity of the above diagram implies that this morphism $\mathrm{Im}(f)\lra C'$ is unique, and is necessarily a monomorphism.
Thus when it exists, the image factorization of $f$ is unique up to unique isomorphism.

\vskip .4cm

\begin{definition}
{\bf (Good input category.)}
We say that a category
$\mathcal{C}$
is a {\em good input category} if it satisfies the following three axioms:

\vskip .2cm

\begin{itemize}
\item[{\bf GI-1}]
{\bf (Well-powered)}
The category
$\mathcal{C}$
is {\em well-powered},
meaning that each object $C$ in $\mathcal{C}$
has an essentially small poset
$\bold{Mon}_C\mathcal{C}$
of subobjects;

\vskip .2cm

\item[{\bf GI-2}]
{\bf (Nonempty point sets)}
The category
$\mathcal{C}$
contains a terminal object $\ast$,
and each object $C$ in $\mathcal{C}$ has a nonempty point set:
$\non{Hom}_{\mathcal{C}}(\ast,C)\not=\O$;

\vskip .2cm

\item[{\bf GI-3}]
{\bf (Second isomorphism)}
Every morphism $f:B\lra C$ in
$\mathcal{C}$
admits an image factorization.
If
$f:B\xhookrightarrow{\ \ \ } C'\lra\!\!\!\!\rightarrow C$
is the composite of a monomorphism followed by an acute quotient in
$\mathcal{C}$,
then $f$'s image factorization
$B\rightarrow\non{Im}(f)\hookrightarrow C$
consists of an acute quotient followed by a monomorphism.
\end{itemize}

\end{definition}

\vskip .4cm

Let us give several examples of good input categories. Part of our intention with these examples is to illustrate that good input categories are categories that come with a natural, and rather operadic-flavored calculus of "substituting objects for points."

\vskip .5cm

\begin{example}
{\bf (Groups.)}
The category
$\bold{Grp}$ of groups is a good input category.
Axiom {\bf GI-3} is nothing but the Second Isomorphism Theorem for groups.
Indeed, every acute quotient in $\bold{Grp}$
is isomorphic to a quotient $p:G\lra\!\!\!\!\rightarrow G/N$
by a normal subgroup $N$.
If $H$ is any subgroup of $G$,
then its image under $p$ is
$HN/N=\{hN:h\in H\}$,
and the Second Isomorphism Theorem realizes this image as the acute quotient $H\lra\!\!\!\!\rightarrow H/(H\cap N)$.
\end{example}

\vskip .2cm

\begin{example}
{\bf (Well-powered abelian categories.)}
Every well powered abelian category
$\mathcal{A}$ is a good input category.
Axiom {\bf GI-3}
follows from the facts that we can compute the image of any morphism $f$
in $\Ac$ as $\mathrm{Im}(f)=\mathrm{coker}(\mathrm{ker}(f))$
and that every monomorphism is the kernel of its cokernel.
\end{example}

\vskip .2cm

\begin{example}
{\bf (Various categories of nonempty sets.)}
The category
$\textsf{NE}\Sets$
of nonempty sets is a good input category, as is the category
$\fnS$
of nonempty finite sets.
In both these categories, acute quotients take the form
$I\lra\!\!\!\!\rightarrow I/J$,
where $J$ is a subset of $I$,
and $I/J$ denotes the set gotten by collapsing $J$ down to a point.
The verification of axiom {\bf GI-3} falls into two distinct cases,
namely composites $I'\xhookrightarrow{\ \ \ }I\lra\!\!\!\!\rightarrow I/J$
where the image of $I'$ intersects $J$, and those where it does not.

Note that neither the category $\bold{Sets}$ of sets
nor the category $\textsf{F}\bold{Sets}$
of finite sets is a good input category,
since the empty set $\O$ lies in both these categories,
but contains no points.
\end{example}

\vskip .2cm

\begin{example}
{\bf (Simplicial objects in a pointed, good input category.)}
If $\mathcal{C}$ is a good input category which is pointed,
then the category $\non{s}\mathcal{C}=[\bold{\Delta}\!^{\non{op}},\ \mathcal{C}]$
of simplicial objects in
$\mathcal{C}$
is a good input category.
Thus the category
$\non{s}\bold{Grp}$
of simplicial groups is a good input category,
as is the category
$\non{s}\mathcal{A}$
of simplicial objects in any well-powered abelian category
$\mathcal{A}$.

\end{example}

\vskip .2cm

\begin{example}
{\bf (Various categories of nonempty simplicial sets.)}
Each of the categories of simplicial objects
$\non{s}\textsf{NE}\Sets=[\bold{\Delta}\!^{\non{op}},\ \textsf{NE}\Sets]$
and
$\non{s}\fnS=[\bold{\Delta}\!^{\non{op}},\ \fnS]$
is a good input category.

Likewise, let
$\textsf{F}\non{s}\fnS$
denote the full subcategory of
$\non{s}\fnS$
consisting of nonempty, {\em finite-dimensional} simplicial sets.
A simplicial set
$X_\bullet$
is an object in
$\textsf{F}\non{s}\fnS$
if $X_\bullet$ is $m$-skeletal for some finite $m\geq0$,
and if $X_n\not=\O$ for each $n$.
Then this category
$\textsf{F}\non{s}\fnS$
is also a good input category.

\end{example}

\vskip .2cm

\begin{example}
{\bf (Little intervals.)}
For each integer $n\geq 1$,
define the {\em little} $n$-{\em interval}
(with endpoints)
to be the 1-skeletal simplicial set
$$I_{\bullet}(n)=\ \ \bullet\!\!\!\rightarrow\!\!\relbar\!\!\!\bullet\!\!\!\rightarrow\!\!\relbar\dotsm\rightarrow\!\!\relbar\!\!\!\bullet$$
containing $n$ many 0-simplices.
Notice that
$I_{\bullet}(1)$
consists of a single point $\bullet$.

Let
$\bold{Int}$
denote the full subcategory of
$\textsf{F}\non{s}\fnS$
consisting of the little $n$-intervals, for all
$n\geq 1$.
Then $\bold{Int}$
is a good input category.
This category $\bold{Int}$
bears a loose resemblance to the operad of little intervals or little 1-cubes,
in the sense that each acute bicartesian square in
$\bold{Int}$
describes a procedure in which we input some little interval
$I_{\bullet}(n)$
into another
$I_{\bullet}(m)$,
at a fixed 0-simplex
$x\in I_{0}(m)$,
to obtain the little interval
$I_{\bullet}(m+n-1)$.

\end{example}

\vskip .2cm

\begin{example}
{\bf (Simplicial rooted trees.)}
Let a {\em simplicial rooted tree}
be any 1-skeletal, 1-connected simplicial set
$X_\bullet$ in
$\textsf{F}\non{s}\fnS$
with the property that, at each {\em vertex}
$v\in X_0$,
there exists at most one nondegenerate 1-simplex
$e\in X_1$
of the form
$$\xy
{\ar^{\ \ \ \ \ \ \ e}
(0,0)*+{v\bullet\!\!\!};
(15,0)*+{\!\!\!\bullet}
**\dir{-}%
?<* \dir{-}%
?(.55)*%
\dir{>}
}%
\endxy
$$
We refer to nondegenerate 1-simplices in
$X_\bullet$ as {\em edges}.
Notice that, according to our definition,
the simplicial set
$\bullet=\Delta\!^0$
is a simplicial rooted tree with no edges.

The vertices of any simplicial rooted tree
$X_\bullet$
come with a natural partial ordering "$\preccurlyeq$,"
where
$v_{1}\preccurlyeq v_2$ whenever
$X_\bullet$ contains an oriented path running from
$v_{1}$ to $v_2$.
Let
$v_{\non{root}}$
denote the terminal vertex in this ordering,
and let an {\em input} be any initial vertex in this ordering.
At an arbitrary vertex
$v_{0}\in X_0$,
let
$\non{In}(v_{0})$
denote the set
of those inputs $v$
in this ordering for which
$v\preccurlyeq v_0$.

Let us say that a simplicial morphism
$f:X_{\bullet}\longrightarrow Y_\bullet$
of simplicial rooted trees is {\em full}
if
$$f(\non{In}(v_{\non{root}}))=\non{In}(f(v_{\non{root}})),$$
and let
$\bold{Tree}$
denote the subcategory of
$\textsf{F}\non{s}\fnS$
consisting of simplicial rooted trees and full simplicial morphisms.
Then $\bold{Tree}$
is a good input category,
and its acute bicartesian squares correspond to root-to-input graftings of trees.

\end{example}

\vskip .5cm

\subsection{Condition {\bf (\ref{OpStruct}-ii)}: the functor {\em F}}
If $\mathcal{C}$ and $\mathcal{D}$
are any two categories containing terminal objects
$\ast_\mathcal{C}$ and $\ast_\mathcal{D}$,
respectively,
then we call a functor
$F:\mathcal{C}\longrightarrow\mathcal{D}$
{\em right-acute}
if $F$ preserves acute cocartesian squares.
In other words, if
$$\xymatrix{
C_2\ar[r]\ar[d]
&C_3\ar[d]\cr
\ \ \ast_{\mathcal{C}}\ \ar@{^{(}->}[r]
&C_1
}$$
is any cocartesian square in
$\mathcal{C}$,
then its image under any right-acute functor $F:\mathcal{C}\longrightarrow\mathcal{D}$ is a cocartesian square
$$\xymatrix{
F(C_2)\ar[r]\ar[d]
&F(C_3)\ar[d]\cr
\ \ \ast_{\mathcal{D}}\ \ar@{^{(}->}[r]
&F(C_1)
}$$
in
$\mathcal{D}$.
In particular,
if $F$ is right-acute, then it preserves terminal objects:
$F(\ast_\mathcal{C})=\ast_\mathcal{D}$.

The functor $F$
is {\em acute}
if it preserves acute bicartesian squares.

\vskip .4cm

As for the locality conditions on $F$,
recall that for each object $C$ in $\Cc$,
we let $F_C$ denote the composite
$F_{C}:\bold{Mon}_{C}\Cc\xrightarrow{{}_{U_C}}\Cc\xrightarrow{{}_{\ F\ }}\Dc$.
From general nonsense, one knows that if
$\eta:F_{C}\Longrightarrow E$ is a natural transformation
between $\mathcal{D}$-valued functors on $\bold{Mon}_{C}\Cc$,
such that the component
$\eta_{B\hookrightarrow C}$
at each object $B\hookrightarrow C$ in $\bold{Mon}_{C}\Cc$
is an epimorphism
$$
\eta_{B\hookrightarrow C}:F(B)\lra\!\!\!\!\rightarrow E(B\!\hookrightarrow\!C)
$$
in $\mathcal{D}$, then
$\eta$ is an epimorphism in the functor category
$[\bold{Mon}_{C}\Cc,\mathcal{D}]$,
i.e., $\eta$ is an object in
the poset
$\bold{Epi}^{F_C}[\bold{Mon}_{C}\Cc,\mathcal{D}]$.

We say that $F$ has
{\em locally defined relations} over
$\mathcal{C}$
if, at each object $C$ in $\Cc$,
the objects in
$\bold{Epi}^{F_C}[\bold{Mon}_{C}\Cc,\mathcal{D}]$
are precisely those natural transformations
$\eta:F_{C}\Longrightarrow E$
whose every component is an epimorphism.

Another way to say this is that if
$F$ has locally defined relations over $\mathcal{C}$,
then the restriction of natural transformations in
$\bold{Epi}^{F_C}[\bold{Mon}_{C}\Cc,\mathcal{D}]$
to their components at any given object
$B\hookrightarrow C$ in $\bold{Mon}_{C}\Cc$
constitutes a functor
$$\non{res}_{B\hookrightarrow C}:\bold{Epi}^{F_C}[\bold{Mon}_{C}\Cc,\mathcal{D}]\longrightarrow\bold{Epi}^{F(B)}\mathcal{D}$$
We say that
$F$ has
{\em locally small relations} over $\Cc$
if, for each object
$B$ in $\mathcal{C}$,
the poset
$\bold{Epi}^{F(B)}\mathcal{D}$
is essentially small.

In examples, the fact that $F$ has locally defined and locally small relations over $\Cc$ tends to follow from general properties of $\Dc$.
Let us give several basic examples:

\vskip .5cm

\begin{example}
{\bf (Right-exact functors between abelian categories.)}

If both $\mathcal{C}$ and $\mathcal{D}$ are abelian categories,
then an additive functor $F:\mathcal{C}\longrightarrow\mathcal{D}$
is right-acute if and only it is right-exact,
and is acute if and only if it is exact.

Such a functor $F$ always has locally defined relations over
$\mathcal{C}$.
If the abelian category
$\mathcal{D}$
is well-powered,
i.e.,
if each poset of subobjects
$\bold{Mon}_{D}\mathcal{D}$
in $\mathcal{D}$ is essentially small,
then
$F$ has locally small relations over
$\mathcal{C}$
as well.

\end{example}

\vskip .2cm

\begin{example}
{\bf (Abelianization.)}
Let $\bold{Ab}$ be the category of abelian groups,
and let 
$$\non{ab}:\bold{Grp}\longrightarrow\bold{Ab}$$
be the abelianization functor:
$\non{ab}(G)=G/[G,G]$.
Then $\non{ab}$
is right-acute, and has locally defined and locally small relations over
$\bold{Grp}$.
\end{example}

\vskip .2cm

\begin{example}
{\bf (Connected components of simplicial sets.)}
Let
$\non{s}\textsf{NE}\Sets=[\bold{\Delta}\!^{\non{op}},\textsf{NE}\Sets]$
be the category of nonempty simplicial sets, and let
$$\pi_{0}:\non{s}\textsf{NE}\Sets\longrightarrow\Sets$$
be the functor that that returns the set
$\pi_{0}(X_\bullet)$
of connected components of each nonempty simplicial set
$X_\bullet$.
Then $\pi_{0}$
is right-acute, with locally defined and locally small relations over
$\non{s}\textsf{NE}\Sets$.
\end{example}

\vskip .5cm

We will give more examples below in Section \ref{q grass}.

We now begin proving Theorem \ref{FMop}.
The proof will use several facts about good input categories and right-acute functors,
which we prove in the following Lemmas \ref{gi1} through \ref{gi7}:

\vskip .7cm

\begin{lemma}\label{gi1}
If $\mathcal{C}$ is a good input category,
then every terminal morphism
$C\longrightarrow\ast$
in $\mathcal{C}$
is an epimorphism.
\end{lemma}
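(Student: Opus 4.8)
The plan is to show that the terminal morphism $t_C:C\lra\ast$ admits a section, from which right-cancellability follows for free. The section is furnished directly by axiom \textbf{GI-2}, so the argument is very short.

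First I would use \textbf{GI-2} to produce a point of $C$, i.e., an element $x\in\Hom_\Cc(\ast,C)$, regarded as a morphism $x:\ast\lra C$; such an $x$ exists precisely because \textbf{GI-2} asserts $\Hom_\Cc(\ast,C)\neq\O$. Composing, I obtain an endomorphism $t_C\circ x:\ast\lra\ast$. Since $\ast$ is terminal, $\Hom_\Cc(\ast,\ast)$ has exactly one element, namely $\id_\ast$, so $t_C\circ x=\id_\ast$. Hence $x$ is a section of $t_C$, i.e., $t_C$ is a split epimorphism.

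Finally I would invoke the standard fact that every split epimorphism is an epimorphism: given $g,h:\ast\lra D$ with $g\circ t_C=h\circ t_C$, precomposition with $x$ gives $g=g\circ t_C\circ x=h\circ t_C\circ x=h$. This completes the proof.

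There is no real obstacle here; the only point requiring a moment's care is that "epimorphism" is meant in the categorical (right-cancellable) sense, and that it is \textbf{GI-2}, rather than the well-poweredness axiom \textbf{GI-1} or the second-isomorphism axiom \textbf{GI-3}, that does all the work in this particular lemma.
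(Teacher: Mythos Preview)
Your proof is correct and follows exactly the same approach as the paper: use axiom \textbf{GI-2} to produce a point $x:\ast\to C$, observe that it is a section of the terminal morphism, and conclude that the terminal morphism is a (split) epimorphism. The paper's version is simply a one-line compression of what you have written out in full.
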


\begin{proof}
Since $\mathcal{C}$
has nonempty point sets, each terminal morphism
$p:C\longrightarrow\ast$
admits at least one section
$s:\ast\longrightarrow C$,
verifying that $p$
is an epimorphism.
\end{proof}

\vskip .2cm

\begin{lemma}\label{gi2}
If $\mathcal{C}$ is a good input category,
then every acute quotient
$p:B\longrightarrow C$
in $\mathcal{C}$
is an epimorphism.
\end{lemma}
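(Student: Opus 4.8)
The plan is to exploit the definition of an acute quotient together with the defining universal property of a bicartesian (in particular cocartesian) square. By hypothesis, the acute quotient $p:B\lra C$ fits into some acute bicartesian square
$$\xymatrix{
A\ar@{^{(}->}[r]\ar[d]
&B\ar[d]^{p}\cr
\ast\ar@{^{(}->}[r]_{c}
&C
}$$
in $\mathcal{C}$. Suppose we are given two morphisms $u,v:C\lra Z$ in $\mathcal{C}$ with $u\circ p=v\circ p$. I want to conclude $u=v$, and the idea is to produce a commutative square with corners $A$, $B$, $\ast$, $Z$ to which the cocartesian universal property applies.

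First I would observe that $u\circ c=v\circ c$: indeed, precomposing each side with the (epimorphic, by Lemma \ref{gi1}) terminal morphism $A\lra\ast$ gives $u\circ c\circ(A\lra\ast)=u\circ p\circ(A\hookrightarrow B)=v\circ p\circ(A\hookrightarrow B)=v\circ c\circ(A\lra\ast)$, and since $A\lra\ast$ is an epimorphism we may cancel it. (Here I use that the square commutes, so $c\circ(A\lra\ast)=p\circ(A\hookrightarrow B)$.) Now both $u$ and $v$ give compatible cocones on the span $B\longleftarrow A\longrightarrow\ast$ with vertex $Z$: for $u$ the legs are $u\circ p:B\lra Z$ and $u\circ c:\ast\lra Z$, and these agree on $A$ by commutativity of the original square; likewise for $v$. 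But $u\circ p=v\circ p$ by assumption and $u\circ c=v\circ c$ by the previous step, so $u$ and $v$ determine the \emph{same} cocone. Since $C$ is the pushout of this span (the square being cocartesian), the induced morphism out of $C$ is unique, forcing $u=v$. Hence $p$ is an epimorphism.

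The only subtlety — and the step I expect to require the most care — is the first one, deducing $u\circ c=v\circ c$ from $u\circ p=v\circ p$; this is exactly where the hypothesis {\bf GI-2} (nonempty point sets, giving that terminal morphisms are epimorphisms via Lemma \ref{gi1}) enters, and it is the reason good input categories were set up with that axiom. Everything else is a direct application of the pushout universal property, so no computation beyond diagram-chasing is needed.
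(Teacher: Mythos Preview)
Your proof is correct and follows essentially the same approach as the paper's. The paper's proof is simply more terse: after invoking Lemma~\ref{gi1} to conclude that $A\to\ast$ is an epimorphism, it appeals directly to the general fact that pushouts preserve epimorphisms, whereas you unpack that fact explicitly via the cocone-uniqueness argument.
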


\begin{proof}
Choose any bicartesian square in
$\mathcal{C}$
that realizes $p$ as an acute quotient:
$$\xymatrix{
{B_0}\ar@{^{(}->}[r]
\ar@{->}[d]
&B\ar@{->}[d]^{p}\\
\ast\ar@{^{(}->}[r]
&C
}$$
By Lemma \ref{gi1},
the morphism
$B_{0}\longrightarrow\ast$
appearing in this square is an epimorphism.
Since the square is cocartesian,
this implies that $p$ is an epimorphism.
\end{proof}

\vskip .2cm

\begin{lemma}\label{gi3}
Every terminal morphism $p:C\longrightarrow\ast$
in a good input category $\mathcal{C}$
is its own image factorization, meaning that any image factorization of $p$
is of the form
$$p:\ 
\xymatrix{
C\ar@{->}[r]
&{\non{Im}(p)}\ar@{->}[r]^{\ \ \ \sim\ }
&{\ast}
}
$$
with the morphism at right being an isomorphism.
\end{lemma}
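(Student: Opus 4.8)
The idea is to begin from the image factorization guaranteed by axiom \textbf{GI-3} and show that its monomorphism part is forced to be an isomorphism by the nonempty point set condition \textbf{GI-2}.

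First I would invoke \textbf{GI-3}: the terminal morphism $p:C\longrightarrow\ast$ admits an image factorization $C\xrightarrow{\ q\ }\mathrm{Im}(p)\xhookrightarrow{\ m\ }\ast$, with $m$ a monomorphism. Since the image factorization of a morphism, when it exists, is unique up to unique isomorphism (as recorded just before the definition of a good input category), it suffices to prove that this particular $m$ is an isomorphism; the assertion for an arbitrary image factorization of $p$ then follows, and the displayed form in the statement is exactly this.

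Next, by \textbf{GI-2} the object $\mathrm{Im}(p)$ has a nonempty point set, so I can choose a morphism $t:\ast\longrightarrow\mathrm{Im}(p)$. Because $\ast$ is terminal, $\mathrm{Hom}_{\mathcal{C}}(\ast,\ast)$ is a singleton, and therefore $m\circ t=\mathrm{id}_\ast$; that is, $t$ is a section of $m$. The final step is the standard observation that a monomorphism which is also a split epimorphism is an isomorphism: from $m\circ t=\mathrm{id}_\ast$ one deduces $m\circ(t\circ m)=m=m\circ\mathrm{id}_{\mathrm{Im}(p)}$, and cancelling the monomorphism $m$ yields $t\circ m=\mathrm{id}_{\mathrm{Im}(p)}$, so $t$ is a two-sided inverse for $m$.

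There is no genuine obstacle here; the only subtlety is that ``mono $+$ epi'' does not give ``iso'' in a general category, so one really must use the section $t$ supplied by \textbf{GI-2}, rather than merely the fact (Lemma \ref{gi1}) that $m$ is an epimorphism. Everything else is bookkeeping with the universal property of $\ast$ and the minimality built into image factorizations.
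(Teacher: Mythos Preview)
Your argument is correct and in fact slightly cleaner than the paper's. Both proofs pick a point $t:\ast\to\mathrm{Im}(p)$ via \textbf{GI-2}, observe $m\circ t=\mathrm{id}_\ast$, and then cancel to obtain $t\circ m=\mathrm{id}_{\mathrm{Im}(p)}$. The difference is in how the cancellation is carried out. The paper first exhibits $p$ as an acute quotient (via the trivial square with $C\xrightarrow{\mathrm{id}}C$ on top), invokes the second clause of \textbf{GI-3} to conclude that the first factor $q:C\to\mathrm{Im}(p)$ is itself an acute quotient, and then appeals to Lemma~\ref{gi2} to get that $q$ is an epimorphism; it then argues from $r\circ s\circ r\circ q=r\circ q$ by cancelling $r$ on the left (mono) and $q$ on the right (epi). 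You bypass all of this: since $m$ is a monomorphism by definition of an image factorization, $m\circ(t\circ m)=m$ already gives $t\circ m=\mathrm{id}$ directly. Your route avoids the detour through acute quotients and Lemma~\ref{gi2} entirely, and your closing remark that ``mono $+$ epi'' alone would not suffice is exactly the right caveat.
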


\begin{proof}
The square
$$
\xymatrix{
C\ar@{->}[r]^{\non{id}}\ar[d]_{p}
&C\ar[d]^{p}
\\
{\ast}\ar@{->}[r]
&{\ast}
}
$$
realizes $p$ as an acute quotient in
$\mathcal{C}$.
Thus by axiom {\bf GI-3},
the morphism
$p$ has an image factorization
$$\xymatrix{C\ar@{->>}[r]^{\!\!\!q\ }&{\non{Im}(p)}\ar@{^{(}->}[r]^{\ \ \ r}&{\ast}}$$
wherein the morphism $q$ is an acute quotient.
By Lemma \ref{gi2}, $q$ is indeed an epimorphism.
Because $\mathcal{C}$ has nonempty point sets,
we can choose a point $s$ in $\non{Im}(p)$:
$$
\xy
(0,0)*+{C}="c";
(40,0)*+{\ast}="*";
(20,0)*+{\non{Im}(p)}="im";
{\ar@{->>}^{q\ \ } "c"; "im"};
{\ar@{^{(}->}_{\ \ r} "im"; "*"};
{\ar@{-->}@/_{1.25pc}/_{s} "*"; "im"};
\endxy
$$
Since $r{}_{{}^{^\circ}}s=\non{id}$,
we have
$r{}_{{}^{^\circ}}s{}_{{}^{^\circ}}r{}_{{}^{^\circ}}q=r{}_{{}^{^\circ}}q$.
But because $r$ is a monomorphism and $q$ is an epimorphism,
this last identity implies that we also have $s{}_{{}^{^\circ}}r=\non{id}$.
\end{proof}

\vskip .2cm

\begin{lemma}\label{gi4}
If $\mathcal{C}$ is a good input category,
then every composite in
$\mathcal{C}$
of the form
$$f:\ B\xrightarrow{\ \ p\ \ }\ast\xrightarrow{\ \ c\ \ }C$$
is an image factorization of $f$.
\end{lemma}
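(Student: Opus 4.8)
The plan is to verify directly that the given factorization $B\xrightarrow{\,p\,}\ast\xrightarrow{\,c\,}C$ meets the two requirements in the definition of an image factorization of $f$: that the right-hand map is a monomorphism, and that the factorization is minimal among all factorizations of $f$ through a monomorphism.

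The first requirement is essentially free. Any morphism out of a terminal object is a monomorphism, since any two morphisms with codomain $\ast$ necessarily coincide. Hence $c:\ast\hookrightarrow C$ is monic, and $B\xrightarrow{p}\ast\xrightarrow{c}C$ is a factorization of $f=c\circ p$ through a monomorphism, with $\ast$ in the role of $\non{Im}(f)$.

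For minimality, I would start from an arbitrary competing factorization $f=m\circ g$, where $g\colon B\to C'$ and $m\colon C'\hookrightarrow C$ is a monomorphism, and exhibit a morphism $\ast\to C'$ completing the comparison triangle. The device is that, since $\Cc$ has nonempty point sets (axiom {\bf GI-2}), the terminal morphism $p\colon B\to\ast$ admits a section $\sigma\colon\ast\to B$ with $p\circ\sigma=\non{id}_\ast$ — this is also how Lemma \ref{gi1} exhibits $p$ as an epimorphism. Setting $\phi:=g\circ\sigma\colon\ast\to C'$, one leg of the triangle is immediate: $m\circ\phi=m\circ g\circ\sigma=f\circ\sigma=c\circ p\circ\sigma=c$. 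For the other leg I compute $m\circ(\phi\circ p)=m\circ g\circ\sigma\circ p=f\circ\sigma\circ p=c\circ(p\circ\sigma)\circ p=c\circ p=f=m\circ g$, and cancel the monomorphism $m$ to obtain $\phi\circ p=g$. Thus the comparison triangle commutes, which is precisely minimality, and $B\xrightarrow{p}\ast\xrightarrow{c}C$ is an image factorization of $f$.

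I do not anticipate a genuine obstacle; the only subtlety is orienting the comparison triangle correctly, so that one uses the section identity $p\circ\sigma=\non{id}_\ast$ rather than the (generally non-identity) endomorphism $\sigma\circ p$ of $B$, together with the observation that $m$ being monic is exactly what forces $\phi=g\circ\sigma$ to be the unique such comparison map — and, per the remarks following the definition of image factorization, automatically a monomorphism.
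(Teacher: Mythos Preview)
Your proof is correct, and it takes a somewhat different route from the paper's. The paper first establishes a general fact: for any composite $f\colon B\xrightarrow{p}C_0\xhookrightarrow{c}C$ with $c$ monic, the image factorization of $f$ is obtained from that of $p$ by postcomposing with $c$. It then specializes to $C_0=\ast$ and invokes Lemma~\ref{gi3} (whose proof uses axiom {\bf GI-3}) to conclude that $\mathrm{Im}(p)\cong\ast$. Your argument bypasses both the general fact and Lemma~\ref{gi3}: you verify minimality directly by choosing any point $\sigma\colon\ast\to B$ (via {\bf GI-2}), setting $\phi=g\circ\sigma$, and checking the two triangles by hand using the section identity $p\circ\sigma=\mathrm{id}_\ast$ and cancellability of $m$. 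This is strictly more elementary---it uses only {\bf GI-2} and the terminality of $\ast$, never {\bf GI-3}---at the cost of not isolating the reusable intermediate statement about images of composites with monomorphisms.
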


\begin{proof}
The universal property of image factorizations dictates that the image factorization of any composite of the form
$$
\xy
(0,0)*+{B}="b";
(40,0)*+{C}="c";
(20,0)*+{C_0}="c0";
{\ar@{->}^{p\ \ } "b"; "c0"};
{\ar@{^{(}->}^{\ \ c} "c0"; "c"};
\endxy
$$
is the factorization
$
\xy
(0,0)*+{B}="b";
(40,0)*+{C}="c";
(20,0)*+{\non{Im}(p)}="im";
{\ar@{->} "b"; "im"};
{\ar@{^{(}->}^{\ \ c{}_{{}^{^\circ}}p_0} "im"; "c"};
\endxy
$,
where
$
\xy
(0,0)*+{B}="b";
(40,0)*+{C_0}="c";
(20,0)*+{\non{Im}(p)}="im";
{\ar@{->} "b"; "im"};
{\ar@{^{(}->}^{\ \ p_0} "im"; "c"};
\endxy
$
is $p$'s image factorization.
In the case where
$\mathcal{C}$ is a good input category and
$C_{0}=\ast$,
the present Lemma then follows from Lemma \ref{gi3}.
\end{proof}

\vskip .2cm

\begin{lemma}\label{gi5}
Suppose given a commutative diagram of the form
$$
\xymatrix{
A\ar@{->}[r]^{f}\ar[d]_{a}
&B\ar[d]^{g}
\\
{\ast}\ar@{->}[r]
&{C}
}
$$
in a good input category
$\mathcal{C}$.
Then there exists a point
$x:\ast\dashrightarrow{\non{Im}(g)}$
for which the diagram
$$
\xymatrix{
A\ar@{->}[r]^{f}\ar[d]_{a}
&B\ar[d]
\\
{\ast}\ar@{-->}[r]_{x\ \ \ }
&{\non{Im}(g)}
}
$$
commutes.
\end{lemma}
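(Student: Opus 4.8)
The strategy is to realize the desired point $x$ as the canonical comparison morphism between two image factorizations of the composite $g\circ f\colon A\to C$ — one of which, by the earlier lemmas, is forced to be $\ast$ itself.

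First I would record that, since the square commutes and its lower-left corner is the terminal object, we have $g\circ f=c\circ a$, where $a\colon A\to\ast$ is the (unique) terminal morphism and $c\colon\ast\to C$ denotes the bottom edge of the square; note that $c$ is automatically a monomorphism, since any two parallel morphisms into $\ast$ coincide. Applying Lemma \ref{gi4} to the composite $A\xrightarrow{\,a\,}\ast\xrightarrow{\,c\,}C$ then tells us that this composite is already an image factorization of $g\circ f$, so we may take $\mathrm{Im}(g\circ f)=\ast$, with projection $a$ and monomorphism $c$. Next I would invoke axiom {\bf GI-3} to get an image factorization $B\xrightarrow{\,q\,}\mathrm{Im}(g)\xhookrightarrow{\,\iota\,}C$ of $g$ itself. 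Precomposing with $f$ produces a second factorization $A\xrightarrow{\,q\circ f\,}\mathrm{Im}(g)\xhookrightarrow{\,\iota\,}C$ of $g\circ f$ through a monomorphism, since $\iota\circ(q\circ f)=(\iota\circ q)\circ f=g\circ f$. Feeding this factorization into the minimality clause of the universal property of the image factorization $\mathrm{Im}(g\circ f)=\ast$, I obtain a morphism $x\colon\ast\to\mathrm{Im}(g)$ — that is, a point of $\mathrm{Im}(g)$ — satisfying $x\circ a=q\circ f$ (and, incidentally, $\iota\circ x=c$). The relation $x\circ a=q\circ f$ is exactly the commutativity of the square in the statement, the unlabeled right-hand vertical arrow there being $q\colon B\to\mathrm{Im}(g)$, so this $x$ is the point we wanted.

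There is no real obstacle here: the only point that wants a moment's care is verifying that Lemma \ref{gi4} genuinely applies — i.e.\ that $a$ is a terminal morphism (so that it legitimately plays the role of ``$p$'' in that lemma) and that $c$ is a monomorphism — both of which are immediate. After that, the argument is a purely formal diagram chase resting on the essential uniqueness of image factorizations supplied by {\bf GI-3}.
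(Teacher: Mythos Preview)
Your proof is correct and follows exactly the same route as the paper: invoke Lemma~\ref{gi4} to recognize $A\to\ast\to C$ as the image factorization of $g\circ f$, then use the minimality of image factorizations against the factorization $A\to\mathrm{Im}(g)\hookrightarrow C$ to produce $x$. The paper simply states this in two sentences, whereas you have spelled out the details of how the universal property is being applied.
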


\begin{proof}
By Lemma \ref{gi4}, we know that the composite
$A\xrightarrow{\ \ a\ \ }\ast\longrightarrow C$
is its own image factorization.
Thus, according to the universal property of image factorizations,
it must factor through the composite
$A\longrightarrow B\longrightarrow\non{Im}(g)\hookrightarrow C$.
\end{proof}

\vskip .2cm

To setup the next Lemma, let
$\mathcal{C}$
be a good input category,
and let
$$
\xy
(0,0)*+{B\ }="b";
(15,0)*+{C}="c";
(30,0)*+{D_2}="d1";
(45,0)*+{D_1}="d2";
{\ar@{^{(}->}^{i} "b"; "c"};
{\ar@{->>}^{p} "c"; "d1"};
{\ar@{->}^{q} "d1"; "d2"}
\endxy
$$
be a composite in
$\mathcal{C}$
consisting of a monomorphism $i$
followed by an acute quotient $p$
followed by an arbitrary morphism $q$.
There are two, a priori distinct ways to obtain what our intuition tells us should be the
"image" of the composite
$q{}_{{}^{^\circ}}p{}_{{}^{^\circ}}i:B\longrightarrow D_1$.
We can simply take the image of $B$ in $D_1$,
or we can first take the image of $B$ in $D_2$,
and then take the image of this image in $D_1$.
These two procedures are described by the upper and lower paths in the commutative diagram
$$
\xy
(0,0)*+{B}="b";
(20,0)*+{C}="c";
(40,0)*+{D_2}="d1";
(60,0)*+{D_1}="d2";
(30,15)*+{\non{Im}(qpi)}="im1";
(20,-15)*+{\non{Im}(pi)}="im2";
(40,-15)*+{\non{Im}(qk)}="im3";
{\ar@{^{(}->}^{i} "b"; "c"};
{\ar@{->>}^{p} "c"; "d1"};
{\ar@{->}^{q} "d1"; "d2"};
{\ar@{->}@/^{1.35pc}/ "b"; "im1"};
{\ar@{^{(}->}@/^{1.35pc}/ "im1"; "d2"};
{\ar@{->}@/_{.5pc}/ "b"; "im2"};
{\ar@{->} "im2"; "im3"};
{\ar@{^{(}->}@/_{.5pc}/^{k} "im2"; "d1"};
{\ar@{^{(}->}@/_{.5pc}/ "im3"; "d2"}
\endxy
$$
The next Lemma says that these two ways of computing the image give the same answer in any good input category $\Cc$.

\vskip .2cm

\begin{lemma}\label{gi6}
In any good input category $\Cc$,
the factorization
$
\xy
(0,0)*+{B}="b";
(17.5,0)*+{\non{Im}(qk)}="c";
(35,0)*+{D_1}="d1";
{\ar@{->} "b"; "c"};
{\ar@{^{(}->} "c"; "d1"}
\endxy
$
of the composite
$q{}_{{}^{^\circ}}p{}_{{}^{^\circ}}i:B\longrightarrow D_1$,
implicit in the lower path of the above diagram,
is an image factorization of
$q{}_{{}^{^\circ}}p{}_{{}^{^\circ}}i$.
\end{lemma}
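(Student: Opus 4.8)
The plan is to verify the two defining properties of an image factorization for the composite $q \circ p \circ i : B \to D_1$: that the candidate factorization passes through a monomorphism, and that it is minimal among all such factorizations. The map $\mathrm{Im}(qk) \hookrightarrow D_1$ is a monomorphism by construction (it is the monic part of the image factorization of $q \circ k$, which exists by axiom \textbf{GI-3}), so the first property is free. The substantive content is the universal property: given any factorization $B \to C' \hookrightarrow D_1$ of $q \circ p \circ i$ through a monomorphism, I must produce a (necessarily unique) morphism $\mathrm{Im}(qk) \to C'$ making the relevant triangle commute.

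First I would set up the key structural input. Since $i$ is a monomorphism and $p$ is an acute quotient, the composite $p \circ i : B \to D_2$ has, by axiom \textbf{GI-3}, an image factorization $B \twoheadrightarrow \mathrm{Im}(pi) \xhookrightarrow{\ k\ } D_2$ in which the first morphism is itself an acute quotient. This is precisely where the second clause of \textbf{GI-3} is doing work: it guarantees not merely that $\mathrm{Im}(pi)$ exists, but that $B \to \mathrm{Im}(pi)$ is an acute quotient (hence an epimorphism by Lemma~\ref{gi2}). That epimorphism property is what lets the universal property of the outer image factorization descend correctly. Next, the composite $q \circ k : \mathrm{Im}(pi) \to D_1$ has an image factorization $\mathrm{Im}(pi) \to \mathrm{Im}(qk) \hookrightarrow D_1$, again by \textbf{GI-3}; composing, I obtain the factorization $B \to \mathrm{Im}(qk) \hookrightarrow D_1$ of $q \circ p \circ i$ that the lower path describes.

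Now for minimality. Suppose $B \xrightarrow{\ b'\ } C' \xhookrightarrow{\ m\ } D_1$ is any factorization of $q \circ p \circ i$ through a monomorphism $m$. I want to first push this down to level $D_2$. Using Lemma~\ref{gi5} applied to the square built from $B \twoheadrightarrow \mathrm{Im}(pi)$ and $k : \mathrm{Im}(pi) \hookrightarrow D_2$ — or more directly, using that $B \twoheadrightarrow \mathrm{Im}(pi)$ is an acute quotient and invoking the universal property of $\mathrm{Im}(pi)$ as the image of $p \circ i$ — I would produce a comparison morphism relating $C'$ (pulled back along $q$ in an appropriate sense) to $\mathrm{Im}(pi)$. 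Concretely: the preimage datum of $C'$ under $q$, together with the fact that $b'$ factors $p \circ i$ after composing with $q$, should let me conclude that $q$ carries $\mathrm{Im}(pi)$ into the subobject "$C'$" in a compatible way; then applying the universal property of $\mathrm{Im}(qk)$ as the image of $q \circ k$ yields the desired morphism $\mathrm{Im}(qk) \to C'$. The commutativity of the full diagram — in particular that this new morphism composed with $m$ recovers the inclusion $\mathrm{Im}(qk) \hookrightarrow D_1$ — follows by a diagram chase, using that $B \twoheadrightarrow \mathrm{Im}(pi)$ and $\mathrm{Im}(pi) \to \mathrm{Im}(qk)$ are epimorphisms (the first by Lemma~\ref{gi2}, the second because the monic part $\mathrm{Im}(qk) \hookrightarrow D_1$ of an image factorization being a mono forces the other factor to be epi, as noted in the discussion preceding axiom \textbf{GI-3}) so that equality of morphisms can be checked after precomposition with them.

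I expect the main obstacle to be the descent step from $D_1$ down to $D_2$: $q$ is merely an arbitrary morphism, not an epimorphism or a monomorphism, so I cannot naively "restrict" the subobject $C' \hookrightarrow D_1$ along $q$. The right move is to avoid restriction and instead work purely with universal properties — treat $C' \hookrightarrow D_1$ as a monomorphism through which $q \circ p \circ i$ factors, observe that therefore $q \circ p \circ i$ also factors through any mono containing its image, and repeatedly apply the universal property of image factorizations at each stage ($\mathrm{Im}(pi)$, then $\mathrm{Im}(qk)$) to chain the comparison maps together. The epimorphy of $B \twoheadrightarrow \mathrm{Im}(pi)$ supplied by \textbf{GI-3} is the linchpin that makes these comparison maps compose to the right thing; everything else is a routine chase once that is in place.
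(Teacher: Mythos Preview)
Your overall architecture is right, and you correctly isolate the crux: producing a map from $\mathrm{Im}(pi)$ (or $\mathrm{Im}(qk)$) to an arbitrary $C'\hookrightarrow D_1$ through which $qpi$ factors. But your proposed mechanism for that step does not work. The universal property of $\mathrm{Im}(pi)$ as an image only yields comparison maps to subobjects of $D_2$, not of $D_1$; and knowing that $B\twoheadrightarrow\mathrm{Im}(pi)$ is an epimorphism is not enough to factor an arbitrary map $B\to C'$ through it, since epimorphisms in a general category carry no such factoring property. Your invocation of Lemma~\ref{gi5} ``applied to the square built from $B\twoheadrightarrow\mathrm{Im}(pi)$ and $k$'' places the lemma at the wrong level: there is no given commutative square with a terminal object there.

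What the paper does at this point is use the \emph{cocartesian} structure of the acute quotient, not merely its epimorphy. One first reduces (by the universal property of $\mathrm{Im}(qpi)$ and the fact that both candidates are subobjects of $D_1$) to constructing a single map $\mathrm{Im}(pi)\to\mathrm{Im}(qpi)$ over $D_1$. Then one chooses an acute bicartesian square
\[
\xymatrix{ B_0 \ar@{^{(}->}[r]^{j} \ar[d] & B \ar@{->>}[d] \\ \ast \ar@{^{(}->}[r] & \mathrm{Im}(pi) }
\]
exhibiting $B\twoheadrightarrow\mathrm{Im}(pi)$ as an acute quotient. The composite $B_0\hookrightarrow B\xrightarrow{qpi}D_1$ factors through $\ast\to D_1$, so Lemma~\ref{gi5} (applied with codomain $D_1$) supplies a point $\ast\to\mathrm{Im}(qpi)$ through which $B_0\hookrightarrow B\to\mathrm{Im}(qpi)$ factors. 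The pushout property of the square then yields the desired $\mathrm{Im}(pi)\to\mathrm{Im}(qpi)$, and commutativity over $D_1$ follows because $B\twoheadrightarrow\mathrm{Im}(pi)$ is epi. This pushout step is the piece your outline is missing.
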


\begin{proof}
It suffices to provide a morphism
$\non{Im}(qk)\dashrightarrow\non{Im}(qpi)$
that makes the diagram
$$
\xy
(0,0)*+{\non{Im}(qk)}="im1";
(24,0)*+{\non{Im}(qpi)}="im2";
(12,12)*+{B}="b";
(12,-12)*+{D_1}="d";
{\ar@{->} "b"; "im1"};
{\ar@{->} "b"; "im2"};
{\ar@{-->} "im1"; "im2"};
{\ar@{_{(}->} "im1"; "d"};
{\ar@{^{(}->} "im2"; "d"};
\endxy
$$
commute.
The existence of such a morphism will follow from the universal property of
$\non{Im}(qk)$
if we can show that there exists a morphism
$\non{Im}(pi)\dashrightarrow\non{Im}(qpi)$
for which the diagram$$
\xy
(0,0)*+{\non{Im}(qk)}="im1";
(26,0)*+{\non{Im}(qpi)}="im2";
(13,13)*+{\non{Im}(pi)}="b";
(13,-13)*+{D_1}="d";
(13,0)*+{{}_\mathrm{(x)}};
{\ar@{->} "b"; "im1"};
{\ar@{-->} "b"; "im2"};
{\ar@{_{(}->} "im1"; "d"};
{\ar@{^{(}->} "im2"; "d"};
\endxy
$$
commutes.

To this end, note that since
$C\longrightarrow\!\!\!\!\rightarrow D_2$
is an acute quotient,
axiom {\bf GI-3} tells us that the morphism
$B\longrightarrow \non{Im}(pi)$
is an acute quotient.
Thus by Lemma \ref{gi2},
this last morphism is an epimorphism
$B\longrightarrow\!\!\!\!\rightarrow\non{Im}(pi)$.
Let
$$
\xy
(0,0)*+{B_0}="B0";
(20,0)*+{B}="B";
(0,-15)*+{\ \ast\ }="*";
(20,-15)*+{\mathrm{Im}(pi)}="Im";
(10,-7.5)*+{{}_\mathrm{(y)}};
{\ar@{^{(}->}^{j} "B0"; "B"};
{\ar@{->} "B0"; "*"};
{\ar@{^{(}->} "*"; "Im"};
{\ar@{->>} "B"; "Im"}
\endxy
$$
be a bicartesian square in
$\mathcal{C}$
that realizes
$B\longrightarrow\!\!\!\!\rightarrow\non{Im}(pi)$
as an acute quotient.
Then the morphism
$qk:\non{Im}(pi)\longrightarrow D_1$
gives us a commutative square
$$\xymatrix{
{B_0}\ar@{^{(}->}[r]^{j}
\ar@{->}[d]
&B\ar@{->}[d]^{qpi}\\
\ast\ar@{^{(}->}[r]
&{D_1}
}$$
Thus by Lemma \ref{gi5},
we get a commutative square
$$\xymatrix{
{B_0}\ar@{^{(}->}[r]^{j}
\ar@{->}[d]
&B\ar@{->}[d]\\
\ast\ar@{^{(}-->}[r]
&{\non{Im}(qpi)}
}$$
Since the square (y) is cocartesian,
this gives rise to the morphism
$\non{Im}(pi)\dashrightarrow\non{Im}(qpi)$
we're after.
Commutativity of the diagram (x) follows from the fact that
$B\longrightarrow\!\!\!\!\rightarrow\non{Im}(pi)$
is an epimorphism.
\end{proof}

\vskip .2cm

\begin{lemma}\label{gi7}
If $\mathcal{C}$ is a good input category
and $F:\mathcal{C}\longrightarrow\mathcal{D}$
is a right-acute functor,
then $F$ takes acute quotients in
$\mathcal{C}$
to epimorphisms in
$\mathcal{D}$.
\end{lemma}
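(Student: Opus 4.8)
The plan is to read everything off the definitions. By definition an acute quotient $p\colon B\twoheadrightarrow C$ sits in an acute bicartesian square
$$
\xymatrix{
A\ar@{^{(}->}[r]\ar[d]
&B\ar[d]^{p}\cr
\ast\ar@{^{(}->}[r]
&C
}
$$
in $\mathcal{C}$. First I would apply $F$: since $F$ is right-acute it preserves terminal objects and carries this square, which is in particular acute cocartesian, to a cocartesian square
$$
\xymatrix{
F(A)\ar[r]\ar[d]
&F(B)\ar[d]^{F(p)}\cr
\ast_{\mathcal{D}}\ar@{^{(}->}[r]
&F(C)
}
$$
in $\mathcal{D}$. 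Thus $F(p)$ is exactly the pushout of the left-hand edge $F(A)\to\ast_{\mathcal{D}}$ along $F(A)\to F(B)$.

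Next I would show that $F(A)\to\ast_{\mathcal{D}}$ is an epimorphism — in fact a split one. By axiom {\bf GI-2} the object $A$ admits a point $s\colon\ast\to A$, and this $s$ is a section of the terminal morphism $A\to\ast$; applying $F$ produces a section $F(s)$ of $F(A)\to\ast_{\mathcal{D}}$. The reason to pass through an honest section here, rather than merely invoking Lemma \ref{gi1}, is that an arbitrary functor need not preserve epimorphisms, whereas it always preserves sections, hence split epimorphisms.

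Finally I would invoke the elementary fact that the pushout of an epimorphism along any morphism is again an epimorphism: if $u,v\colon F(C)\to T$ satisfy $u\circ F(p)=v\circ F(p)$, then precomposing with $F(A)\to F(B)$ and using commutativity of the square shows that $u$ and $v$ also agree after precomposition with the edge $\ast_{\mathcal{D}}\to F(C)$ — here one uses precisely that $F(A)\to\ast_{\mathcal{D}}$ is epi — and then the uniqueness clause in the universal property of the pushout forces $u=v$. Hence $F(p)$ is an epimorphism, which is the claim. There is no genuine obstacle in this argument; the one point that deserves care is the use of {\bf GI-2} to exhibit a section that survives $F$ as a (split) epimorphism, after which the pushout-of-an-epimorphism lemma finishes the proof.
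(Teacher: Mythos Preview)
Your proof is correct and follows essentially the same approach as the paper: pick a point of $A$ via {\bf GI-2}, apply $F$ to get a section of $F(A)\to\ast_{\mathcal{D}}$, and then use that the pushout of an epimorphism is an epimorphism. Your added remark that one needs an honest section (since $F$ need not preserve arbitrary epimorphisms) makes explicit a point the paper leaves implicit.
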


\begin{proof}
Let $\ast_\mathcal{C}$ and $\ast_\mathcal{D}$
denote the terminal objects in
$\mathcal{C}$ and $\mathcal{D}$.
Let
$p:B\longrightarrow C$
be an acute quotient in
$\mathcal{C}$,
realized by an acute bicartesian square
$$\xymatrix{
{B_0}\ar@{^{(}->}[r]
\ar@{->}[d]
&B\ar@{->}[d]^{p}\\
\ast_{\mathcal{C}}\ar@{^{(}->}[r]
&C
}$$
in $\mathcal{C}$.
Choose a point
$b_{0}:\ast_{\mathcal{C}}\longrightarrow B_0$.
Since $F$ is right-acute, it takes the point $b_0$ to a point
$F(b_{0}):\ast_{\mathcal{D}}\longrightarrow F(B_0)$.
The existence of this point $F(b_{0})$
verifies that $F(B_0)\longrightarrow\ast_{\mathcal{D}}$
is an epimorphism.
Since $F$ takes the above bicartesian square to an acute cocartesian square in
$\mathcal{D}$,
this implies that
$F(p):F(B)\longrightarrow F(C)$
is an epimorphism.
\end{proof}

\vskip .5cm

We are now ready to prove Theorem \ref{FMop}:

\vskip .7cm

\begin{proof}
{\bf (Proof of Theorem \ref{FMop}.)}
First, note that since
$\mathcal{C}$ is well-powered,
while $F$ has locally small relations over
$\mathcal{C}$,
each category of relations
$\bold{Epi}^{F_C}[\bold{Mon}_{C}\mathcal{C},\mathcal{D}]$
is essentially small.
Thus the Fulton-MacPherson construction
produces a well defined functor
$\mathcal{F\!M}_{F}:\mathcal{C}_{\non{iso}}^{\non{op}}\longrightarrow\Sets$.
This proves part {\bf (I)} of Theorem \ref{FMop}.

To construct the composition assignment $\gamma$
whose existence we assert in part {\bf (II)} of Theorem \ref{FMop},
consider an arbitrary, acute bicartesian square
$$\xymatrix{
\ar@{}[dr]|{\non{(a)}}
C_2\ar@{^{(}->}[r]
\ar@{->>}[d]
&C_3\ar@{->>}[d]^{p}\\
\ast\ar@{^{(}->}[r]
&C_1
}$$
in
$\mathcal{C}$.
We define the map
$$\gamma_{\non{(a)}}:\mathcal{F\!M}_{F}(C_1)\times\mathcal{F\!M}_{F}(C_2)\longrightarrow\mathcal{F\!M}_{F}(C_3)$$
associated to this square as follows:

Recall that for each object $C$ in $\Cc$, we use $F_C$
to denote the composite $\bold{Mon}_{C}\Cc\xrightarrow{U_C}\Cc\xrightarrow{F}\Dc$.
Then consider an arbitrary pair of epimorphisms of
$\mathcal{D}$-valued functors
$$\eta_{1}:F_{C_1}\Longrightarrow\!\!\!\!\Rightarrow E_1{\non{\ \ \ \ \ \ over\ \ \ \ \ \ }}\bold{Mon}_{C_1}\mathcal{C}$$
$$\eta_{2}:F_{C_2}\Longrightarrow\!\!\!\!\Rightarrow E_2{\non{\ \ \ \ \ \ over\ \ \ \ \ \ }}\bold{Mon}_{C_2}\mathcal{C}$$
representing a pair in
$\mathcal{F\!M}_{F}(C_1)\times\mathcal{F\!M}_{F}(C_1)$.
The class
$\gamma_{\non{(a)}}(\eta_{1},\eta_{2})$
in
$\mathcal{F\!M}_{F}(C_3)$ 
is represented by a new epimorphism
$$\eta_{3}:F_{C_3}\Longrightarrow\!\!\!\!\Rightarrow E_3{\non{\ \ \ \ \ \ over\ \ \ \ \ \ }}\bold{Mon}_{C_3}\mathcal{C},$$
which we define piecewise over
$\bold{Mon}_{C_3}\mathcal{C}$,
according to the following two cases:

\vskip .2cm

{\bf Case-1:} If $B\hookrightarrow C_3$
is an object in
$\bold{Mon}_{C_3}\mathcal{C}$
that factors through the monomorphism
$C_{2}\hookrightarrow C_3$,
then define
$E_{3}(B\hookrightarrow C_3)$
to be the object
$E_{2}(B\hookrightarrow C_2)$
in $\mathcal{D}$.
As for the component of $\eta_3$ at
$B\hookrightarrow C_3$,
let it simply be the component of
$\eta_2$ at
$B\hookrightarrow C_2$:
$$\xy
{\ar^{\ \ \ \ \ \eta_{2,B\hookrightarrow C_2}}
(0,0)*+{F(B)};
(35,0)*+{E_{2}(B\hookrightarrow C_2)}
**\dir{-}%
?<* \dir{-}%
?>* \dir{>>}%
}%
\endxy
$$
The fact that this component is an epimorphism in $\Dc$
follows from our assumption that $F$
has locally defined relations over $\Cc$.

{\bf Case-2:} If $B\hookrightarrow C_3$
is an object in
$\bold{Mon}_{C_3}\mathcal{C}$
that does not factor through the monomorphism
$C_{2}\hookrightarrow C_3$,
then consider the composite
$$p|_{B}:
\xy
{\ar^{}
(0,0)*+{B\ };
(10,0)*+{\!\!}
**\dir{-}%
?<* \dir^{(}%
?>* \dir{>}%
}%
\endxy
\xy
{\ar^{\ \ \ p}
(0,0)*+{C_3};
(15,0)*+{C_1}
**\dir{-}%
?<* \dir{-}%
?>* \dir{>>}%
}%
\endxy
$$
in $\mathcal{C}$.
Since $\mathcal{C}$
is a good input category, axiom {\bf GI-3} tells us that this composite has an image factorization
$$\xy
{\ar^{}
(0,0)*+{B};
(10,0)*+{\!\!}
**\dir{-}%
?<* \dir{-}%
?>* \dir{>>}%
}%
\endxy
\xy
{\ar^{}
(0,0)*+{\non{Im}(p|_B)};
(18,0)*+{C_1}
**\dir{-}%
?<* \dir^{(}%
?>* \dir{>}%
}%
\endxy
$$
consisting of an acute quotient followed by a monomorphism.
The monomorphism
$\non{Im}(p|_B)\hookrightarrow C_1$
in this factorization is an object in
$\bold{Mon}_{C_1}\mathcal{C}$.
Define
$E_{3}(B\hookrightarrow C_3)$
to be the object
$E_{1}(\non{Im}(p|_B)\hookrightarrow C_1)$
in $\mathcal{D}$.

As for the component of
$\eta_3$ at $B\hookrightarrow C_3$,
note that since
$F$ is right-acute,
Lemma \ref{gi7} tells us that
$F$ takes the acute quotient
$B\longrightarrow\!\!\!\!\rightarrow{\non{Im}(p|_B)}$
to an epimorphism
$F(B)\longrightarrow\!\!\!\!\rightarrow F({\non{Im}(p|_B)})$
in $\mathcal{D}$.
Define the component of $\eta_3$ at
$B\hookrightarrow C_3$ to be the composite
$$\xy
{\ar^{\ \ \ \ \ F(B\longrightarrow\!\!\!\!\rightarrow{\non{Im}(p|_B)})}
(0,0)*+{F(B)};
(35,0)*+{}
**\dir{-}%
?<* \dir{-}%
?>* \dir{>>}%
}%
\endxy
\xy
{\ar^{\ \ \ \ \ \ \ \ \eta_{1,{\non{Im}(p|_B)}\hookrightarrow C_1}}
(0,0)*+{\!\!\!F\big(\non{Im}(p|_B)\big)};
(55,0)*+{E_{1}\big({\non{Im}(p|_B)}\hookrightarrow C_1\big)}
**\dir{-}%
?<* \dir{-}%
?>* \dir{>>}%
}%
\endxy
$$
The fact that the component at right is an epimorphism in $\Dc$
follows from our assumption that $F$
has locally defined relations over $\Cc$.
Hence this composite is an epimorphism in $\Dc$.

\vskip .2cm

{\bf Functoriality\ and\ naturality:}
To see that these components
$$\eta_{3,B\hookrightarrow C_3}:F(B)\longrightarrow\!\!\!\!\rightarrow E_{3}(B\hookrightarrow C_3)$$
constitute a natural transformation of functors over
$\bold{Mon}_{C_3}\mathcal{C}$,
it suffices to consider the situation of a morphism
$$
\xy
(0,0)*+{C_3}="C";
(-10,12)*+{\ A\ }="A";
(10,12)*+{B}="B";
{\ar@{_{(}->} "A"; "C"};
{\ar@{^{(}->}^{f} "A"; "B"};
{\ar@{^{(}->} "B"; "C"}
\endxy
$$
in
$\bold{Mon}_{C_3}\mathcal{C}$,
such that
$A\hookrightarrow C_3$
factors through
$C_2\hookrightarrow C_3$,
but
$B\hookrightarrow C_3$
does not factor through
$C_2\hookrightarrow C_3$.

In this situation, we have a commutative square
$$\xymatrix{
{A}\ar@{^{(}->}[r]
\ar@{->}[d]
&B\ar@{->}[d]^{p|_B}\\
\ast\ar@{^{(}->}[r]
&C_{3}
}$$
in $\mathcal{C}$.
Thus by Lemma \ref{gi5}, we know that there exists a point
$x:\ast\dashrightarrow{\non{Im}(p|_B)}$ for which the square
$$\xymatrix{
{A}\ar@{^{(}->}[r]
\ar@{->>}[d]
&B\ar@{->>}[d]\\
\ast\ar@{-->}[r]_{x\ \ \ \ }
&{\non{Im}(p|_B)}
}$$
commutes.
This square will be neither cartesian nor cocartesian in general.
Still, because $F$ is right acute,
specifically because $F$ preserves terminal objects, 
the image of this last square under $F$ will be the commutative square (x) in the diagram
$$
\xy
(0,0)*+{F(A)}="fa";
(0,30)*+{F(B)}="fb";
(25,30)*+{F\big(\non{Im}(p|_B)\big)}="fim";
(63,30)*+{E_{1}\big(\non{Im}(p|_B)\hookrightarrow C_1\big)}="eim";
(63,0)*+{E_{2}(A\hookrightarrow C_1)}="ea";
(25,15)*+{\ \ast_{\mathcal{D}}}="*";
(13,19)*+{\non{{}_{(x)}}}="x";
(25,5)*+{\non{{}_{(y)}}};
{\ar@{->}^{F(f)} "fa"; "fb"};
{\ar@{->>} "fa"; "ea"};
{\ar@{->>} "fb"; "fim"};
{\ar@{->>}^{\eta_1\ \ \ \ \ } "fim"; "eim"};
{\ar@{->} "fa"; "*"};
{\ar@{->}_{F(x)} "*"; "fim"};
{\ar@{-->}_{t} "ea"; "*"};
{\ar@{-->}_{\eta_{1}{}_{{}^{^\circ}}F(x){}_{{}^{^\circ}}t} "ea"; "eim"}
\endxy
$$
The terminal morphism $t$ appearing in this diagram makes the triangle (y) commute.
Thus if we define
$$
E_{3}(f):\ \ E_{3}(A\!\hookrightarrow\!C_3)\ \lra\ E_{3}(B\!\hookrightarrow\!C_3)
$$
to be the composite $\eta_{1}{}_{{}^{^\circ}}F(x){}_{{}^{^\circ}}t$
appearing in the above diagram,
then the outer square in the diagram commutes, giving us naturality for $f$. 

Checking that these morphisms $E(f)$
fit together with the functorial structure of
$E_1$ and $E_2$ to produce a functorial structure on $E_3$
is a straightforward matter.
We leave it to the reader.

\vskip .2cm

{\bf Operadic axioms:} It remains to verify axioms {\bf OP-1} through {\bf OP-3} for our assignment
$\gamma$.

The verification of {\bf OP-1} follows from an inspection of
$\mathcal{F}\!\mathcal{M}_F$'s
functorial structure and our definition of the composition
$\gamma$
on
$\mathcal{F}\!\mathcal{M}_F$.
We leave the the details to the reader.

To verify {\bf OP-2}, suppose given a diagram
$$\xymatrix{
C_3\ar[r]\ar[d]
&C_{23}\ar[r]\ar[d]
&C_{123}\ar[d]\\
\ast\ar@{^{(}->}[r]
&C_2\ar[r]\ar[d]
&C_{12}\ar[d]\\
\ &\ast\ar@{^{(}->}[r]
&C_1
}$$
in $\mathcal{C}$,
whose every square is bicartesian.
Fix an object $B\hookrightarrow C_{123}$ in
$\bold{Mon}_{C_{123}}\mathcal{C}$,
and consider a triad of epimorphisms
$$\eta_{1}:F_{C_1}\Longrightarrow\!\!\!\!\Rightarrow E_1{\non{\ \ \ \ \ \ over\ \ \ \ \ \ }}\bold{Mon}_{C_1}\mathcal{C}$$
$$\eta_{2}:F_{C_2}\Longrightarrow\!\!\!\!\Rightarrow E_2{\non{\ \ \ \ \ \ over\ \ \ \ \ \ }}\bold{Mon}_{C_2}\mathcal{C}$$
$$\eta_{3}:F_{C_3}\Longrightarrow\!\!\!\!\Rightarrow E_3{\non{\ \ \ \ \ \ over\ \ \ \ \ \ }}\bold{Mon}_{C_3}\mathcal{C},$$
representing an element in
$\FM{F}(C_1)\times\FM{F}(C_2)\times\FM{F}(C_3)$.

We have to consider three cases:

\vskip .2cm

{\bf Case-1:} If our monomorphism
$B\hookrightarrow C_{123}$
factors through
$C_{3}$,
then consider the commutative diagrams
$$
\vcenter{\vbox{
\xy
(0,0)*+{C_3}="c3";
(15,0)*+{C_{23}}="c23";
(30,0)*+{C_{123}}="c123";
(0,-15)*+{\ast}="*23";
(15,-15)*+{C_2}="c2";
(15,-30)*+{\ast}="*12";
(30,-30)*+{C_1}="c1";
(30,15)*+{B}="b";
(7.5,-7.5)*+{\non{{}_{(c)}}};
(23,-15)*+{\non{{}_{(d)}}};
{\ar@{^{(}->} "b"; "c123"};
{\ar@{->} "c3"; "*23"};
{\ar@{^{(}->} "*23"; "c2"};
{\ar@{->} "c2"; "*12"};
{\ar@{^{(}->} "*12"; "c1"};
{\ar@{^{(}->} "c3"; "c23"};
{\ar@{^{(}->} "c23"; "c123"};
{\ar@{->} "c23"; "c2"};
{\ar@{->} "c123"; "c1"};
{\ar@{_{(}->}@/_{1.5pc}/ "b"; "c3"};
{\ar@{^{(}->}@/_{.65pc}/ "b"; "c23"}
\endxy}}
\ \ \ \ \ \ \ \ \ \ \ \ \mathrm{and}\ \ \ \ \ \ \ \ \ \ \ \ \ 
\vcenter{\vbox{
\xy
(0,0)*+{C_3}="c3";
(30,0)*+{C_{123}}="c123";
(0,-15)*+{\ast}="*23";
(15,-15)*+{C_2}="c2";
(30,-15)*+{C_{12}}="c12";
(15,-30)*+{\ast}="*12";
(30,-30)*+{C_1}="c1";
(30,15)*+{B}="b";
(23,-23)*+{\non{{}_{(a)}}};
(15,-7.5)*+{\non{{}_{(b)}}};
{\ar@{^{(}->} "b"; "c123"};
{\ar@{->} "c3"; "*23"};
{\ar@{^{(}->} "*23"; "c2"};
{\ar@{->} "c2"; "*12"};
{\ar@{^{(}->} "*12"; "c1"};
{\ar@{^{(}->} "c3"; "c123"};
{\ar@{^{(}->} "c2"; "c12"};
{\ar@{->} "c123"; "c12"};
{\ar@{->} "c12"; "c1"};
{\ar@{_{(}->}@/_{1.5pc}/ "b"; "c3"}
\endxy}}
$$
In computing the double-composition associated to the diagram at left,
the fact that
$B\hookrightarrow C_{123}$
factors through
$C_{23}\hookrightarrow C_{123}$
means that in order to evaluate the composition associated to the square (d),
it suffices to evaluate the composition associate to the square (c)
at the object
$B\hookrightarrow C_{23}$ in
$\bold{Mon}_{C_{23}}\mathcal{C}$.
But since
$B\hookrightarrow C_{23}$
factors through
$C_{3}\hookrightarrow C_{23}$,
this evaluation is just the evaluation of the epimorphism
$\eta_{3}:F_{C_3}\Longrightarrow\!\!\!\!\Rightarrow E_3$
at the object
$B\hookrightarrow C_3$
in
$\bold{Mon}_{C_3}\mathcal{C}$.
Following similar reasoning,
we see that the double-composition associated to the diagram at right above also returns the evaluation of the epimorphism
$\eta_{3}:F_{C_3}\Longrightarrow\!\!\!\!\Rightarrow E_3$
at the object
$B\hookrightarrow C_3$
in
$\bold{Mon}_{C_3}\mathcal{C}$.

{\bf Case-2:} Suppose that our monomorphism
$B\hookrightarrow C_{123}$
factors through
$C_{23}\hookrightarrow C_{123}$,
but does not factor through
$C_{3}\hookrightarrow C_{23}$.
Then consider the commutative diagrams
$$
\vcenter{\vbox{
\xy
(0,0)*+{C_3}="c3";
(20,0)*+{C_{23}}="c23";
(40,0)*+{C_{123}}="c123";
(0,-20)*+{\ast}="*23";
(20,-20)*+{C_2}="c2";
(20,-40)*+{\ast}="*12";
(40,-40)*+{C_1}="c1";
(40,20)*+{B}="b";
(30,-10)*+{\non{Im}(qj)}="im";
(10,-10)*+{\non{{}_{(c)}}};
(30,-25)*+{\non{{}_{(d)}}};
{\ar@{^{(}->}^{i} "b"; "c123"};
{\ar@{->} "c3"; "*23"};
{\ar@{^{(}->} "*23"; "c2"};
{\ar@{->} "c2"; "*12"};
{\ar@{^{(}->} "*12"; "c1"};
{\ar@{^{(}->} "c3"; "c23"};
{\ar@{^{(}->}|(.475){\hole} "c23"; "c123"};
{\ar@{->}_{q} "c23"; "c2"};
{\ar@{->} "c123"; "c1"};
{\ar@{_{(}->}@/_{1pc}/_{j} "b"; "c23"};
{\ar@{->>}@/_{1pc}/ "b"; "im"};
{\ar@{^{(}->} "im"; "c2"};
\endxy}}
\ \ \ \ \ \ \ \ \ \ \ \ \ \mathrm{and}\ \ \ \ \ \ \ \ \ \ \ \ \ 
\vcenter{\vbox{
\xy
(0,0)*+{C_3}="c3";
(40,0)*+{C_{123}}="c123";
(0,-20)*+{\ast}="*23";
(20,-20)*+{C_2}="c2";
(40,-20)*+{C_{12}}="c12";
(20,-40)*+{\ast}="*12";
(40,-40)*+{C_1}="c1";
(40,20)*+{B}="b";
(30,-10)*+{\non{Im}(p_{2}i)}="im";
(30,-30)*+{\non{{}_{(a)}}};
(12.5,-10)*+{\non{{}_{(b)}}};
{\ar@{^{(}->}^{i} "b"; "c123"};
{\ar@{->} "c3"; "*23"};
{\ar@{^{(}->} "*23"; "c2"};
{\ar@{->} "c2"; "*12"};
{\ar@{^{(}->} "*12"; "c1"};
{\ar@{^{(}->}|(.74){\hole} "c3"; "c123"};
{\ar@{^{(}->} "c2"; "c12"};
{\ar@{->}^{p_2} "c123"; "c12"};
{\ar@{->} "c12"; "c1"};
{\ar@{->>}@/_{1pc}/ "b"; "im"};
{\ar@{_{(}->} "im"; "c12"};
{\ar@{^{(}->} "im"; "c2"}
\endxy}}
$$
where the existence of the monomorphism
$\non{Im}(p_{2}i)\hookrightarrow C_2$
appearing in the diagram at right
follows from the the existence of $j$ in the diagram at left, combined with the universal property of image factorizations.
When we compute the double-composition associated to the diagram at left,
we get the evaluation of the epimorphism
$\eta_{2}:F_{C_2}\Longrightarrow\!\!\!\!\Rightarrow E_2$
at the object
$\non{Im}(qj)\hookrightarrow C_2$
in
$\bold{Mon}_{C_2}\mathcal{C}$.
When we compute the double-composition associated to the diagram at right,
we get the evaluation of this same epimorphism
$\eta_{2}:F_{C_2}\Longrightarrow\!\!\!\!\Rightarrow E_2$,
but now at the object
$\non{Im}(p_{2}i)\hookrightarrow C_2$
in
$\bold{Mon}_{C_2}\mathcal{C}$.
But the universal property of image factorizations implies that
$\non{Im}(qj)\hookrightarrow C_2$
and
$\non{Im}(p_{2}i)\hookrightarrow C_2$
are isomorphic in
$\bold{Mon}_{C_2}\mathcal{C}$.
Thus these two answers are identical at the level of isomorphism classes of natural quotients of
$F_{C_2}$.

{\bf Case-3:} Suppose that our monomorphism
$B\hookrightarrow C_{123}$
doesn't even factor through
$C_{23}\hookrightarrow C_{123}$.
Consider then the commutative diagrams
$$
\vcenter{\vbox{
\xy
(0,0)*+{C_3}="c3";
(15,0)*+{C_{23}}="c23";
(30,0)*+{C_{123}}="c123";
(0,-15)*+{\ast}="*23";
(15,-15)*+{C_2}="c2";
(15,-30)*+{\ast}="*12";
(30,-30)*+{C_1}="c1";
(30,15)*+{B}="b";
(45.5,-23)*+{\non{Im}(pi)}="im2";
(7.5,-7.5)*+{\non{{}_{(c)}}};
(23,-15)*+{\non{{}_{(d)}}};
{\ar@{_{(}->}_{i} "b"; "c123"};
{\ar@{->} "c3"; "*23"};
{\ar@{^{(}->} "*23"; "c2"};
{\ar@{->} "c2"; "*12"};
{\ar@{^{(}->} "*12"; "c1"};
{\ar@{^{(}->} "c3"; "c23"};
{\ar@{^{(}->} "c23"; "c123"};
{\ar@{->} "c23"; "c2"};
{\ar@{->}^{p} "c123"; "c1"};
{\ar@{->>}@/^{1.25pc}/ "b"; "im2"};
{\ar@{^{(}->} "im2"; "c1"}
\endxy}}
\ \ \ \ \ \ \ \ \ \mathrm{and}\ \ \ \ \ \ \ \ \ \ \ \ \ 
\vcenter{\vbox{
\xy
(0,0)*+{C_3}="c3";
(30,0)*+{C_{123}}="c123";
(0,-15)*+{\ast}="*23";
(15,-15)*+{C_2}="c2";
(30,-15)*+{C_{12}}="c12";
(15,-30)*+{\ast}="*12";
(30,-30)*+{C_1}="c1";
(30,15)*+{B}="b";
(45.5,-7.50)*+{\non{Im}(p_{2}i)}="im1";
(45.5,-23)*+{\non{Im}(p_{1}k)}="im2";
(23,-23)*+{\non{{}_{(a)}}};
(15,-7.5)*+{\non{{}_{(b)}}};
{\ar@{^{(}->}_{i} "b"; "c123"};
{\ar@{->} "c3"; "*23"};
{\ar@{^{(}->} "*23"; "c2"};
{\ar@{->} "c2"; "*12"};
{\ar@{^{(}->} "*12"; "c1"};
{\ar@{^{(}->} "c3"; "c123"};
{\ar@{^{(}->} "c2"; "c12"};
{\ar@{->}_{p_2} "c123"; "c12"};
{\ar@{->}^{p_1} "c12"; "c1"};
{\ar@{->>}@/^{1.4pc}/ "b"; "im1"};
{\ar@{->>} "im1"; "im2"};
{\ar@{^{(}->}^{k\ \ \ \ } "im1"; "c12"};
{\ar@{^{(}->} "im2"; "c1"}
\endxy}}
$$
When we compute the double-composition associated to the diagram at left,
we get the evaluation of
$\eta_{1}:F_{C_1}\Longrightarrow\!\!\!\!\Rightarrow E_1$
at the object $\mathrm{Im}(pi)\hookrightarrow C_{1}$
in $\bold{Mon}_{C_{1}}\Cc$.
When we compute the double-composition associated to the diagram at right,
we see that it returns the evaluation of
$\eta_{1}:F_{C_1}\Longrightarrow\!\!\!\!\Rightarrow E_1$
at the object $\mathrm{Im}(p_{1}k)\hookrightarrow C_{1}$
in $\bold{Mon}_{C_{1}}\Cc$. 
By Lemma \ref{gi6}, the objects
$\mathrm{Im}(pi)\hookrightarrow C_{1}$ and $\mathrm{Im}(p_{1}k)\hookrightarrow C_{1}$
in $\bold{Mon}_{C_{1}}\Cc$ are isomorphic.

\vskip .3cm

Finally, to verify axiom {\bf OP-3},
fix a polyhedral diagram in $\Cc$ as in axiom {\bf OP-3}
of Definition \ref{absop},
and fix an object
$B\hookrightarrow A$
in
$\bold{Mon}_{A}\mathcal{C}$.
Consider a triad of epimorphisms
$$\eta:F_{D}\Longrightarrow\!\!\!\!\Rightarrow E{\non{\ \ \ \ \ \ over\ \ \ \ \ \ }}\bold{Mon}_{D}\mathcal{C}$$
$$\eta_{1}:F_{C_1}\Longrightarrow\!\!\!\!\Rightarrow E_1{\non{\ \ \ \ \ \ over\ \ \ \ \ \ }}\bold{Mon}_{C_1}\mathcal{C}$$
$$\eta_{2}:F_{C_2}\Longrightarrow\!\!\!\!\Rightarrow E_2{\non{\ \ \ \ \ \ over\ \ \ \ \ \ }}\bold{Mon}_{C_2}\mathcal{C},$$
representing an element in
$\FM{F}(D)\times\FM{F}(C_1)\times\FM{F}(C_2)$.

We have to consider two cases:

\vskip .2cm

$\non{\bold{Case-1:}}$ If $B\hookrightarrow A$
factors through $C_{1}\hookrightarrow A$,
then consider the pair of commutative diagrams
$$
\vcenter{\vbox{
\xy
(21,63)*+{B}="b";
(21,42)*+{A}="a";
(0,21)*+{B_{1}}="b1";
(42,21)*+{B_{2}}="b2";
(21,0)*+{D}="d";
(16,26)*+{C_{1}}="c1";
(27,15)*+{\ast}="*";
(16,16)*+{{}_{\non{(a)}}}="(a)";
(27,26)*+{{}_{\non{(d)}}}="(d)";
{\ar@{->}|(.425){\hole} "a";"b1"};
{\ar@{_{(}->}@/_{2pc}/ "b";"b1"};
{\ar@{->} "a";"b2"};
{\ar@{->} "b1";"d"};
{\ar@{->} "b2";"d"};
{\ar@{_{(}->} "c1";"a"};
{\ar@{^{(}->} "c1";"b1"};
{\ar@{->} "c1";"*"};
{\ar@{^{(}->} "*";"b2"};
{\ar@{_{(}->}_{d_1} "*";"d"};
{\ar@{^{(}->}^{i} "b"; "a"};
{\ar@{^{(}->}@/_{2pc}/ "b"; "c1"}
\endxy}}
\ \ \ \ \ \ \ \ \ \ \ \mathrm{and}\ \ \ \ \ \ \ \ \ \ \ \ \ 
\vcenter{\vbox{
\xy
(21,63)*+{B}="b";
(0,43)*+{\mathrm{Im}(p_{1}i)}="Im";
(21,42)*+{A}="a";
(0,21)*+{B_{1}}="b1";
(42,21)*+{B_{2}}="b2";
(21,0)*+{D}="d";
(26,26)*+{C_{2}}="c2";
(15,15)*+{\ast}="*";
(26,15)*+{{}_{\non{(c)}}}="(c)";
(15,26)*+{{}_{\non{(b)}}}="(b)";
{\ar@{->}_{p_{1}} "a";"b1"};
{\ar@{->} "b";"Im"};
{\ar@{_{(}->} "Im";"b1"};
{\ar@{->} "a";"b2"};
{\ar@{->} "b1";"d"};
{\ar@{->} "b2";"d"};
{\ar@{->} "c2";"a"};
{\ar@{->} "c2";"b2"};
{\ar@{->} "c2";"*"};
{\ar@{_{(}->} "*";"b1"};
{\ar@{^{(}->}^{d_2} "*";"d"};
{\ar@{^{(}->}^{i} "b"; "a"};
\endxy}}
$$
Observe that since $p_{1}i$ is none other than the composite
$B\hookrightarrow C_{1}\hookrightarrow B_{1}$,
it is a monomorphism.
Hence the object
$\mathrm{Im}(p_{1}i)\hookrightarrow B_{1}$
in $\bold{Mon}_{B_1}\Cc$ is isomorphic to
$B\hookrightarrow B_{1}$.
It follows from this that the double-composition gotten by first applying the composition associated to
(a) and then applying the composition associated to (b)
returns the evaluation of
$\eta_{1}:F_{C_1}\Longrightarrow\!\!\!\!\Rightarrow E_1$
at the object $B\hookrightarrow C_{1}$
in $\bold{Mon}_{C_1}\Cc$.
On the other hand, the double-composition gotten by first applying the composition associated to
(c) and then applying the composition associated to (d)
also returns the evaluation of
$\eta_{1}:F_{C_1}\Longrightarrow\!\!\!\!\Rightarrow E_1$
on the object $B\hookrightarrow C_{1}$
in $\bold{Mon}_{C_1}\Cc$.

By symmetry, this also accounts for the case where $B\hookrightarrow A$
factors through $C_2$.

\vskip .2cm

{\bf Case-2:} If $B\hookrightarrow A$
does not factor through $C_{1}\hookrightarrow A$,
then consider the commutative diagram
$$
\xymatrix{
&B\ar@{^{(}->}^{i}[d]
\\
&A\ar_{p_1}[ld]\ar^{p_2}[rd]
\\
B_{1}\ar_{q_1}[rd]
&&
B_{2}\ar^{q_2}[ld]
\\
&
D
}
$$
Since $p_1$ and $p_2$ are acute quotients,
realized by the acute bicartesian squares (b) and (d), respectively,
Lemma \ref{gi6} implies that this diagram determines a unique image 
$\mathrm{Im}\hookrightarrow D$
for $B$ in $D$,
and both the (a)-(b) double-composition and the (c)-(d) double composition
return the evaluation of
$\eta_{D}:F_{C_D}\Longrightarrow\!\!\!\!\Rightarrow E$
on this object $\mathrm{Im}\hookrightarrow D$ in
$\bold{Mon}_{D}\Cc$.
\end{proof}

\vskip 2cm

\section{Fulton-MacPherson operads represented by quiver Grassmannians.}\label{q grass}

\vskip .5cm

We begin this section with another example of a right-acute functor and its associated Fulton-MacPherson operad. We will give more examples, variants on the present example, at the end of this section.

\subsection{Conormal functors \& their operads.}
Fix a ground field $\Bbbk$, and let
$\FVect{\Bbbk}$
denote the category of finite dimensional
$\Bbbk$-vector spaces.
Recall that $\fnS$ denotes the category of finite, nonempty sets,
and that $\fnS$ is a good input category.
It turns out that we can construct all right-acute functors
$F:\fnS\lra\FVect{\Bbbk}$,
in fact all acute functors of this form, 
in the following manner:

\vskip .3cm

\begin{definition}
{\bf (Conormal functor)}
For each finite dimensional vector space $V$,
the {\em conormal functor}
associated to $V$
is the functor
$$\con{V}:\fnS\lra\FVect{\Bbbk}$$
that, at each nonempty, finite set
$I$, returns the kernel
$\con{V}(I)$
of the addition map
$V^{I}\lra V$:
$$
\xy
(0,0)*+{0}="1";
(15,0)*+{\con{V}(I)}="2";
(30,0)*+{V^I}="3";
(45,0)*+{V}="4";
(60,0)*+{0}="5";
(30,-10)*+{(v_{i})_{I}}="6";
(45,-10)*+{\Sigma_{I}{v_i}}="7";
(30,-5)*+{\raisebox{\depth}{\rotatebox[origin=c]{90}{$\in$}}\ };
(45,-5)*+{\raisebox{\depth}{\rotatebox[origin=c]{90}{$\in$}}};
{\ar@{->} "1"; "2"};
{\ar@{->} "2"; "3"};
{\ar@{->} "3"; "4"};
{\ar@{->} "4"; "5"};
{\ar@{|->} "6"; "7"};
\endxy
$$
\end{definition}

\vskip .3cm

\begin{lemma}\label{conorm}
{\bf (Conormal functors -vs.- acute functors.)}

{\bf (1)} The conormal functor $\con{V}:\fnS\lra\FVect{\Bbbk}$ is acute.

{\bf (2)} Conversely, if $F:\fnS\lra\FVect{\Bbbk}$ is a right-acute functor, then
$F\cong\con{V}$,
where $V$ is the value of $F$ on any fixed, two-element set:
$V=F(\{\ast_1,\ast_2\})$.
\end{lemma}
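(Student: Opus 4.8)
The plan is to prove the two statements separately; part~(2) is where essentially all the work lies. For part~(1), I would recall from the discussion of classical operads that every acute bicartesian square in $\fnS$ is isomorphic to one of the normal form with corners $J$, $J\sqcup(I-\{i\})$, $\ast$, $I$, whose right-hand vertical map $p$ collapses $J$ onto the point $i\in I$; since any functor preserves isomorphisms of squares, it suffices to show $\con{V}$ sends this one square to a bicartesian square in $\FVect{\Bbbk}$. Now $\con{V}$ is covariant via push-forward of tuples, $\con{V}(f)\big((v_i)_i\big)=\big(\sum_{f(i)=j}v_i\big)_j$, and $\con{V}(\ast)=\ker(\operatorname{id}_V)=0$, so a square with that corner is bicartesian exactly when its two remaining arrows form a short exact sequence. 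I would then verify by hand that
\[
0\lra\con{V}(J)\lra\con{V}\!\big(J\sqcup(I-\{i\})\big)\xrightarrow{\ \con{V}(p)\ }\con{V}(I)\lra 0
\]
is exact: the first map (extension by zero) is injective; its image is exactly the set of zero-sum tuples supported on $J$, which is $\ker\con{V}(p)$; and $\con{V}(p)$ is surjective because $J$ is nonempty --- lift a zero-sum tuple on $I$ by placing its $i$-entry on a single chosen element of $J$ and copying the rest. Hence $\con{V}$ is acute.

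For part~(2), put $V=F(\{1,2\})$; the aim is a natural isomorphism $\Phi\colon\con{V}\Rightarrow F$. First I would record three facts. (a)~A right-acute functor preserves terminal objects, so $F(\ast)=0$. (b)~Every monomorphism in $\fnS$ is split --- a nonempty set is a retract of every set containing it --- so $F$ carries monomorphisms to monomorphisms; in particular, for distinct $i,j\in I$ the inclusion $\iota_{ij}\colon\{1,2\}\hookrightarrow I$, $1\mapsto i$, $2\mapsto j$, has $F(\iota_{ij})$ injective, and $F$ of a bijection is an isomorphism. (c)~For $|I|\geq 2$, collapsing a two-element subset $\{a,b\}\subseteq I$ gives an acute cocartesian square over $\ast$; applying the right-acute $F$ and invoking (b) produces a short exact sequence $0\to F(\{a,b\})\to F(I)\to F\!\big(I/(a\!\sim\!b)\big)\to 0$, so induction on $|I|$ yields $\dim_{\Bbbk}F(I)=(|I|-1)\dim_{\Bbbk}V=\dim_{\Bbbk}\con{V}(I)$.

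The crux is the identity $F(\tau)=-\operatorname{id}_V$, where $\tau$ is the transposition of $\{1,2\}$; I would prove it inside $F(\{1,2,3\})$. Collapsing $\{1,2\}$, respectively $\{1,3\}$, gives two acute cocartesian squares exhibiting $\operatorname{im}F(\iota_{12})$, respectively $\operatorname{im}F(\iota_{13})$, as the kernel of a quotient $F(\{1,2,3\})\twoheadrightarrow V$; since the composite of the first quotient with $\iota_{13}$ (and of the second with $\iota_{12}$) is $F$ of a bijection, hence injective, these two images meet only in $0$, so by (c) one gets $F(\{1,2,3\})=\operatorname{im}F(\iota_{12})\oplus\operatorname{im}F(\iota_{13})$. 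Reducing the relevant composites to bijections once more, I would compute in these coordinates that $F(\iota_{23})(v)=F(\iota_{12})\big(F(\tau)v\big)+F(\iota_{13})(v)$. Feeding this together with the identities $F(\sigma)F(\iota_{12})=F(\iota_{12})F(\tau)$ and $F(\sigma)F(\iota_{13})=F(\iota_{23})$ for the transposition $\sigma=(1\,2)$ into the relation $F(\sigma)^2=\operatorname{id}$, evaluated on $F(\iota_{13})(v)$, gives $F(\iota_{12})\big(F(\tau)^2v+F(\tau)v\big)=0$; since $F(\iota_{12})$ is injective and $F(\tau)$ invertible, this forces $F(\tau)=-\operatorname{id}_V$.

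To finish, I would present $\con{V}(I)$ by generators $\delta_{ij}(v)$ --- the tuple with $v$ at $i$ and $-v$ at $j$, for $i\neq j$ and $v\in V$ --- subject to $\Bbbk$-linearity in $v$, the relations $\delta_{ij}=-\delta_{ji}$, and the path relations $\delta_{ij}+\delta_{jk}=\delta_{ik}$; fixing a base point $i_0$ and noting that the $\delta_{i i_0}$ freely generate $\con{V}(I)$ shows this presentation is complete. Define $\Phi_I\big(\delta_{ij}(v)\big)=F(\iota_{ij})(v)$: it respects linearity trivially, respects $\delta_{ij}=-\delta_{ji}$ because $\iota_{ji}=\iota_{ij}\circ\tau$ and $F(\tau)=-\operatorname{id}_V$, and respects the path relations because, via the monomorphism $\{i,j,k\}\hookrightarrow I$ and functoriality, they reduce to the case $I=\{1,2,3\}$ settled above; hence $\Phi_I$ is well defined. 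Naturality is a check on generators: for $g\colon I\to I'$, $\con{V}(g)\big(\delta_{ij}(v)\big)$ equals $\delta_{g(i)g(j)}(v)$ when $g(i)\neq g(j)$ and $0$ otherwise, matching $F(g)\circ F(\iota_{ij})=F(g\circ\iota_{ij})$, which is $F(\iota_{g(i)g(j)})$ in the first case and factors through $F(\ast)=0$ in the second. Finally $\Phi_I$ is an isomorphism: this is immediate for $|I|\leq 2$, and for larger $I$ one applies the five lemma to the map of short exact sequences that $\Phi$ induces from an acute cocartesian square collapsing a two-element subset, using induction on $|I|$ together with the dimension equality in (c). The main obstacle is isolating and proving $F(\tau)=-\operatorname{id}_V$: the $\con{V}$-side identities are routine, but on the $F$-side the sign has to be squeezed out of the interplay between the pushout structure and the $S_3$-action on a three-element set, and arranging clean coordinates on $F(\{1,2,3\})$ is the fiddliest step.
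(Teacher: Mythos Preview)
Your argument is correct, but it takes a genuinely different route from the paper's. For part~(2) the paper adjoins a basepoint: it sets $\Omega^{1}(I):=F(I\sqcup\ast)$, shows via a filtration (successively collapsing each $i_m$ to $\ast$) that $\Omega^{1}(I)\cong V^{I}$ with the summands indexed by the sections $\varphi_i:\{\bullet,\ast\}\hookrightarrow I\sqcup\ast$, and then reads off from the right-acute square for $I\hookrightarrow I\sqcup\ast\twoheadrightarrow\{\bullet,\ast\}$ that $F(I)$ sits as the kernel of the addition map $V^{I}\to V$. The sign relation $F(\tau)=-\operatorname{id}_V$ is never isolated; it is absorbed into the identification of $F(\pi)$ with addition. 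Your approach instead works intrinsically inside $F(I)$, extracts the sign $F(\tau)=-\operatorname{id}_V$ by a direct $S_3$-computation in $F(\{1,2,3\})$, and then builds the isomorphism from a generators-and-relations presentation of $\con{V}(I)$ by the $\delta_{ij}$. What the paper's trick buys is brevity and a suggestive ``K\"ahler differentials'' picture (hence the notation $\Omega^1$), at the cost of leaving naturality implicit; what your approach buys is an explicit, self-contained construction of the natural transformation $\Phi$, with naturality checked on generators and the isomorphism pinned down by the five lemma. Both are valid; yours is longer but more transparent about exactly where each axiom is used.
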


In particular, if $F:\fnS\lra\FVect{\Bbbk}$ is right-acute, then it is acute.

\begin{proof}
  The verification of {\bf (1)} is straightforward.
  
  To verify {\bf (2)}, fix a two element set
  $\{\bullet,\ast\}$ in $\bold{S}$
  once and for all.
  Define
  $V=F(\{\bullet,\ast\})$,
  and define a new functor
  $$\Omega^{1}:\fnS\lra\FVect{\Bbbk}$$
  according to
  $\Omega^{1}(I)=F(I\sqcup\ast)$.
  
  Note that $V=\Omega^{1}(\bullet)$.
  
  For each element $i\in I$,
  let $\varphi_{i}:\{\bullet,\ast\}\hookrightarrow I\sqcup\ast$
  be the inclusion that fixes $\ast$ and takes $\bullet$ to the element $i$.
  Let $\pi:I\sqcup\ast\lra\{\bullet,\ast\}$
  be the projection that fixes $\ast$ and takes the entire set $I$ to $\bullet$.
  Since each $\varphi_{i}$
  is a section of $\pi$,
  each
  $\varphi_{i}$
  induces an inclusion $F(\varphi_{i}):V\hookrightarrow\Omega^{1}(I)$.
  We claim that $\Omega^{1}(I)$
  is the direct sum of the images of these inclusions.
  Indeed, choose a total ordering $I=\{i_{1},i_{2},\dots,i_{n}\}$,
  and consider the sequence
  $$
  I\sqcup\ast\xrightarrow{\pi_1}\{i_{2},i_{3},\dots,i_{n},\ast\}\xrightarrow{\pi_2}\{i_{3},\dots,i_{n},\ast\}\xrightarrow{\pi_3}\cdots\xrightarrow{\pi_{n-1}}\{i_{n},\ast\}
  $$
  wherein $\pi_{m}$ fixes the subset
  $\{i_{m+1},\dots,i_{n},\ast\}$,
  and takes $i_m$ to $\ast$.
  Since $F$ is right-acute by assumption,
  the image of this last sequence under $F$
  becomes a tower whose $m^\mathrm{th}$ step is the extension
  $$
  0\lra V\lra \Omega^{1}(\{i_{m},i_{m-1},\dots,i_{n}\})\xrightarrow{F(\pi_m)}\Omega^{1}(\{i_{m-1},\dots,i_{n}\})\lra 0
  $$
  Thus $\Omega^{1}(I)\cong V^I$.
  
  With this isomorphism in hand, right-acuteness of $F$
  gives us an exact sequence
  $$
  F(I)\lra V^{I}\xrightarrow{F(\pi)}V\lra 0
  $$
  Using any projection $I\sqcup\ast\lra\!\!\!\!\rightarrow I$ that preserves $I$,
  we see that $F(I)\longrightarrow V^I$ must be injective.
  Furthermore, since each inclusion $\varphi_i$ is a section of $\pi$,
  the morphism $F(\pi):V^{I}\longrightarrow V$
  must be the addition morphism.
\end{proof}

\vskip .3cm

\begin{example}\label{config op}
{\bf (The operad of configurations in {\em V}.)}
It is easy to see that each conormal functor
$\con{V}:\fnS\lra\FVect{\Bbbk}$
has locally defined, locally small relations over
$\fnS$.
Thus by Lemma \ref{conorm} and Theorem \ref{FMop},
the Fulton-MacPherson construction,
applied to $\con{V}$,
produces a classical, set-valued operad
$$
\FM{\con{V}}:\isop{\fnS}\longrightarrow\Sets,
$$
which we call the {\em conormal operad}
associated to $V$.

In fact, we can view the assignment
$V\mapsto\FM{\con{V}}$
as a functor
$\FVect{\Bbbk}\lra\bold{Op}(\fnS,\Sets)$.
In particular, each operad
$\FM{\con{V}}$
comes with a right
$\mathrm{PGL}(V)$-action that is compatible with the operadic structure on
$\FM{\con{V}}$,
and which commutes with the right
$\Aut_{\fnS}(I)$-action
on each set $\FM{\con{V}}(I)$.

\vskip .4cm

Our central result in the present section is that the conormal operad
$\FM{\con{V}}$
is represented by an operad with values in projective
$\Bbbk$-varieties.
Many operads like $\FM{C_V}$ are similarly
represented by projective $\Bbbk$-varieties or projective
$\Bbbk$-schemes. We will give examples below.

Let $\bold{Sch}_{\Bbbk}$
denote the category of finite-type $\Bbbk$-schemes,
and let $\bold{Var}_{\Bbbk}$ denote its full subcategory of projective
$\Bbbk$-varieties.
Then our prototypical Theorem is the following:

\end{example}

\vskip .5cm

\begin{theorem}\label{representability}
{\bf (Representability of the conormal operad.)}
For each finite dimensional $\Bbbk$-vector space $V$,
there exists an operad
$\non{FM}_{\con{V}}:\isop{\fnS}\longrightarrow\bold{Var}_{\Bbbk}$
with values in projective $\Bbbk$-varieties,
which represents the conormal operad $\FM{\con{V}}$,
in the sense that there exist natural isomorphisms
$$
\FM{\con{V}}(I)\ \ \cong\ \ \Hom_{\Sch{\Bbbk}}(\Spec{\Bbbk},\mathrm{FM}_{\con{V}\!}(I))
$$
compatible with the two operadic structures.
\end{theorem}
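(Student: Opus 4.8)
\textbf{Step 1: the point set as a set of subrepresentations.}
The plan is to realize each $\mathrm{FM}_{\con{V}\!}(I)$ as a disjoint union of quiver Grassmannians and then to refine the set-level composition $\gamma$ of Theorem \ref{FMop} to a morphism of these varieties. First I would pass to a skeleton of the poset $\bold{Mon}_{I}\fnS$, so that its objects are exactly the nonempty subsets $B\subseteq I$ with a unique arrow $B'\hookrightarrow B$ whenever $B'\subseteq B$; let $Q_{I}$ be its Hasse quiver. A functor $\bold{Mon}_{I}\fnS\to\FVect{\Bbbk}$ is the same as a finite-dimensional representation of $Q_{I}$ satisfying the commutativity relations, and every subrepresentation of such a representation inherits those relations; so subfunctors of $(\con{V})_{I}$ are exactly the subrepresentations of the representation $M_{I}$ whose space at $B$ is $\con{V}(B)=\ker(V^{B}\to V)$ and whose structure maps are the conormal restrictions of the coordinate inclusions $V^{B'}\hookrightarrow V^{B}$. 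Since $\FVect{\Bbbk}$ is abelian, $\con{V}$ has locally defined relations (as noted in Example \ref{config op}), so an object of $\Epi{(\con{V})_{I}}{[\bold{Mon}_{I}\fnS,\FVect{\Bbbk}]}$ is precisely a componentwise-epimorphic natural transformation $\eta\colon(\con{V})_{I}\Rightarrow E$, and $\eta\mapsto\ker\eta$ identifies $\FM{\con{V}}(I)$ with the set of all subrepresentations of $M_{I}$ (just as in Example \ref{ExB}). I would therefore set
$$
\mathrm{FM}_{\con{V}\!}(I)\ :=\ \coprod_{\mathbf{e}\,\le\,\underline{\dim}\,M_{I}}\mathrm{Gr}_{\mathbf{e}}(M_{I}),
$$
the finite disjoint union, over dimension vectors, of quiver Grassmannians of $M_{I}$; each $\mathrm{Gr}_{\mathbf{e}}(M_{I})$ is the closed subscheme of $\prod_{B}\mathrm{Gr}(e_{B},\con{V}(B))$ cut out by the subrepresentation conditions on the universal subbundles, so $\mathrm{FM}_{\con{V}\!}(I)$ is a projective $\Bbbk$-variety. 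Its $\Bbbk$-points are exactly the $\Bbbk$-rational subrepresentations of $M_{I}$, i.e.\ the elements of $\FM{\con{V}}(I)$; and a bijection $\sigma\colon I\xrightarrow{\sim}I'$ induces an isomorphism of posets, hence of quivers, carrying $M_{I'}$ to $M_{I}$ verbatim, hence an isomorphism $\mathrm{FM}_{\con{V}\!}(I')\xrightarrow{\sim}\mathrm{FM}_{\con{V}\!}(I)$ inducing the $\Aut_{\fnS}(I)$-equivariant bijection on points. This yields a functor $\mathrm{FM}_{\con{V}}\colon\isop{\fnS}\to\bold{Var}_{\Bbbk}$ representing $\FM{\con{V}}$ as a functor.

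\textbf{Step 2: operadic composition at the scheme level.}
By Theorem \ref{FMop} it suffices to treat the standard square $C_{2}=J\hookrightarrow C_{3}=J\sqcup(I-\{i\})$, $\ \ast\hookrightarrow_{i}C_{1}=I$, with $p\colon C_{3}\twoheadrightarrow I$ collapsing $J$ to $i$. I would use the functor-of-points description: a $T$-point of $\mathrm{FM}_{\con{V}\!}(I)$ (resp.\ $\mathrm{FM}_{\con{V}\!}(J)$) is a family of subbundles $\mathcal{K}_{1}(B)\subseteq\con{V}(B)\otimes_{\Bbbk}\mathcal{O}_{T}$, $B\subseteq I$ (resp.\ $\mathcal{K}_{2}(B)$, $B\subseteq J$), compatible with the structure maps of $M_{I}$ (resp.\ $M_{J}$). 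Transcribing the piecewise recipe from the proof of Theorem \ref{FMop}, for $B\subseteq C_{3}$ define
$$
\mathcal{K}_{3}(B)\ =\ \begin{cases}\ \mathcal{K}_{2}(B)&\text{if }B\subseteq J,\\[1mm]\ \big(\con{V}(B\twoheadrightarrow p(B))\otimes\mathcal{O}_{T}\big)^{-1}\,\mathcal{K}_{1}(p(B))&\text{if }B\not\subseteq J,\end{cases}
$$
which is the kernel of the $\eta_{3}$ built there, now with $\mathcal{O}_{T}$-coefficients. The map $\con{V}(B\twoheadrightarrow p(B))$ is surjective ($\con{V}$ applied to a surjection of finite sets, equivalently to an acute quotient; cf.\ Lemma \ref{gi7}), so its base change is a surjection of vector bundles and $\mathcal{K}_{3}(B)$ is again a subbundle, of the locally constant rank $\dim\con{V}(B)-\dim\con{V}(p(B))+\operatorname{rk}\mathcal{K}_{1}(p(B))$. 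That $(\mathcal{K}_{3}(B))_{B}$ is compatible with the structure maps of $M_{C_{3}}$ is exactly the ``functoriality and naturality'' step in the proof of Theorem \ref{FMop}, which is a diagram chase in $\fnS$ together with $\con{V}$ applied to such diagrams --- everything in sight being stable under $-\otimes_{\Bbbk}\mathcal{O}_{T}$, so it holds over any $T$. Hence $(\mathcal{K}_{1},\mathcal{K}_{2})\mapsto(\mathcal{K}_{3})$ is natural in $T$, the target dimension vector depends only on the source dimension vectors, and one obtains a morphism of projective $\Bbbk$-varieties
$$
\gamma_{\non{(a)}}\colon\ \mathrm{FM}_{\con{V}\!}(I)\times\mathrm{FM}_{\con{V}\!}(J)\ \longrightarrow\ \mathrm{FM}_{\con{V}\!}\big(J\sqcup(I-\{i\})\big)
$$
inducing the set-level composition on $\Bbbk$-points.

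\textbf{Step 3: axioms, and the main obstacle.}
Finally I would verify axioms {\bf OP-1}, {\bf OP-2}, {\bf OP-3} for these morphisms. Equivariance {\bf OP-1} is the $\Aut$-naturality already noted in Step 1. For {\bf OP-2} and {\bf OP-3} one compares two morphisms out of a product of varieties $\mathrm{FM}_{\con{V}\!}(-)$; working in the skeletal models of the posets $\bold{Mon}$ (so that the ``isomorphic images'' appearing in the proof of Theorem \ref{FMop} become literally equal subsets), the two tuples of subbundles that the two sides assign to each vertex are given by the same preimage-under-a-fixed-bundle-map formula, hence coincide, so the two morphisms agree on $T$-points for all $T$, hence agree. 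The set-level identities of Theorem \ref{FMop} then give, on $\Bbbk$-points, precisely the compatibility of the natural isomorphisms $\FM{\con{V}}(I)\cong\Hom_{\Sch{\Bbbk}}(\Spec{\Bbbk},\mathrm{FM}_{\con{V}\!}(I))$ with the two operadic structures. I expect the real obstacle to be Step 2: one must check carefully that \emph{every} operation used in the construction of $\gamma$ in the proof of Theorem \ref{FMop} --- forming images in $\fnS$, the induced conormal maps, preimages of subbundles, kernels of surjective bundle maps --- makes sense for flat families and commutes with arbitrary base change, so that the recipe genuinely defines a natural transformation of functors of points (equivalently, a morphism of schemes, via the universal subbundles on the quiver Grassmannians) and not merely a map on $\Bbbk$-points. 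If $\bold{Var}_{\Bbbk}$ is taken to mean reduced schemes, one replaces each quiver Grassmannian by its reduction; this is harmless, since any morphism out of a reduced scheme factors through the reduction of its target (over a perfect field the products of reduced quiver Grassmannians in play remain reduced).
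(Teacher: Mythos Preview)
Your proposal is correct and follows essentially the same route as the paper: realize $\mathrm{FM}_{\con{V}\!}(I)$ as a disjoint union of quiver Grassmannians for the representation $\conqr{V}$ of $\Qc(I)$, identify $\Bbbk$-points with subrepresentations (equivalently, with quotients of $(\con{V})_{I}$), and then promote the set-level composition of Theorem~\ref{FMop} to a morphism of schemes via the functor of points.

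The one difference worth noting is in how Step~2 is packaged. You transcribe the recipe for $\gamma$ by hand in $\mathcal{O}_{T}$-coefficients and then worry, rightly, about whether each ingredient (images in $\fnS$, preimages of subbundles, kernels of surjective bundle maps) is stable under base change. The paper sidesteps this entirely with a single observation: for each $\Bbbk$-scheme $\pi\colon T\to\Spec{\Bbbk}$, the set of $T$-points of the \emph{non-reduced} disjoint union $\mathrm{FM}^{\mathrm{nr}}_{\con{V}\!}(I)$ is, on the nose, the Fulton-MacPherson set $\FM{\pi^{*}\con{V}}(I)$ for the right-acute functor $\pi^{*}\con{V}\colon\fnS\to\bold{Coh}_{T}$. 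Theorem~\ref{FMop} then applies wholesale, at every $T$ simultaneously, to produce an operadic structure on $T\mapsto\mathrm{FM}^{\mathrm{nr}}_{\con{V}\!}(-)(T)$; Yoneda converts this into morphisms of schemes, and one restricts to the underlying reduced varieties at the end. This dissolves your ``real obstacle'': there is nothing to re-verify about base change, because the entire argument of Theorem~\ref{FMop} runs verbatim in $\bold{Coh}_{T}$ rather than in $\FVect{\Bbbk}$. Your hands-on version and the paper's invocation of Theorem~\ref{FMop} over $T$ are two presentations of the same content.
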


\vskip .3cm

\begin{remark}
We will construct each of these projective $\Bbbk$-varieties
$\mathrm{FM}_{\con{V}}(I)$
as a {\em quiver Grassmannian}.
Let us spend a moment reviewing the theory of quiver Grassmannians.
\end{remark}

\vskip .3cm

\subsection{Reminder on quivers \& quiver Grassmannians}\ 

{\bf Quivers and their representations.}
Recall that a {\em quiver} $\Qc$ is nothing but an oriented graph.
More explicitly, a quiver $\Qc$ consists of a set $\Qc_0$
of {\em vertices},
a set $\Qc_1$
of {\em arrows},
and {\em source} and {\em target} functions
$$
\xy
(0,0)*+{\Qc_1}="1";
(15,0)*+{\Qc_0}="0";
{\ar@<1 ex>^{\mathrm{src}} "1"; "0"};
{\ar@<-.5 ex>_{\mathrm{tar}} "1"; "0"}
\endxy
$$
We will communicate the fact that an arrow
$a\in\Qc_1$
has source $u=\mathrm{src}(a)$
and target $v=\mathrm{tar}(a)$
in $\Qc_0$
by writing
$u\ \bullet\xrightarrow{\ \ a\ \ }\bullet\ v$.

Let $\Qc$ be any quiver,
and let $\Dc$ be some fixed category.
Then a {\em representation} $\qr{M}$
of $\Qc$
in $\Dc$
consists of an assignment of an object
$\qr{M}(v)$ of $\Dc$
to each vertex $v$ in $\Qc$,
and an assignment of a morphism
$\qr{M}(a):\qr{M}(u)\lra\qr{M}(v)$ in $\Dc$
to each arrow
$u\ \bullet\xrightarrow{\ \ a\ \ }\bullet\ v$
in $\Qc$.

A {\em morphism}
$\psi:\qr{M}\lra\qr{M}'$
between two representations of $\Qc$ in $\Dc$
is any family of morphisms
$\psi_{v}:\qr{M}(v)\lra\qr{M}'(v)$ in $\Dc$,
indexed by the vertices of $\Qc$,
that commute in the obvious way with the morphisms in
$\qr{M}$ and $\qr{M}'$.
We let
$\bold{Rep}_{\Dc}(\Qc)$
denote the category of representations of
$\Qc$ in $\Dc$.

Fix a quiver
$\Qc$.
To ease notation, let
$\bold{Rep}_{\Bbbk}(\Qc)$
denote the category of representations of $\Qc$
in $\FVect{\Bbbk}$, that is, in finite dimensional
$\Bbbk$-vector spaces.
The category $\bold{Rep}_{\Bbbk}(\Qc)$
is abelian, with kernels and cokernels taken vertexwise.
If our underlying quiver $\Qc$
is {\em finite}, meaning that $\Qc_0$ and $\Qc_1$ are both finite,
and is {\em acyclic},
meaning that every oriented path in $\Qc$ has a distinct source and target,
then the Grothendieck group
$\mathrm{K}_{0}(\Qc)$ of
$\bold{Rep}_{\Bbbk}(\Qc)$
is isomorphic to the free
$\ZZ$-module $\ZZ\Qc_0$ generated by the set
$\Qc_0$ of vertices in our quiver $\Qc$.
By a {\em dimension vector} $\qr{d}$ on $\Qc$,
we mean any element in $\ZZ\Qc_0$.
Given a dimension vector $\qr{d}$,
we let $\qr{d}(v)\in\ZZ$
denote its coefficient at the vertex $v$ in $\Qc$.
The isomorphism $\ZZ\Qc_{0}\cong\mathrm{K}_{0}(\Qc)$
identifies dimension vectors with classes in
$\mathrm{K}_{0}(\Qc)$, and vice versa.
Thus given any representation
$\qr{M}$ of $\Qc$
in $\FVect{\Bbbk}$,
we let $[\qr{M}]\in\ZZ\Qc_{0}$
denote the associated dimension vector of $\qr{M}$.
Its coefficient at any given vertex $v\in\Qc_0$
is none other than $\mathrm{dim}_{\Bbbk\ }\qr{M}(v)$.

Fix a ground field $\Bbbk$,
a quiver $\Qc$,
and a representation $\qr{M}$ of $\Qc$
in $\FVect{\Bbbk}$.
Then a {\em subrepresentation}
$\qr{N}$ of $\qr{M}$
consists of a $\Bbbk$-linear subspace
$\qr{N}(v)\ \sub\ \qr{M}(v)$
at each vertex $v$ of $\Qc$,
such that at every morphism
$\qr{M}(a):\qr{M}(u)\lra\qr{M}(v)$
in our representation,
we have
$\qr{M}(a)\big(\qr{N}(u)\big)\sub\qr{M}(v)$.
Equivalently, a subrepresentation is the image of a monomorphism
$\qr{N}\hookrightarrow\qr{M}$
in $\bold{Rep}_{\Bbbk}(\Qc)$.
In particular,
every subrepresentation of $\qr{M}$
is a representation of $\Qc$.
We say that $\qr{N}$
is a
$\qr{d}$-{\em dimensional subrepresentation}
of $\qr{M}$
is $\qr{d}(v)=\mathrm{dim}_{\Bbbk\ }\qr{N}(v)$
at each vertex $v$ in $\Qc$,
that is, if $[\qr{N}]=\qr{d}$.

\vskip .3cm

{\bf Quiver Grassmannians.}
Recall that $\bold{Sch}_{\Bbbk}$ denotes the category of finite-type
$\Bbbk$-schemes.
Over each scheme $T$, let $\bold{Coh}_T$
denote the category of coherent $\mathscr{O}_T$-modules.

Fix a finite quiver $\Qc$,
a representation $\qr{M}$ of $\Qc$
in $\FVect{\Bbbk}$,
and a dimension vector $\qr{d}$ on $\Qc$.
Then there exists a functor
$$
\mathscr{G}\!r_{\Qc}^{\mathrm{nr}}(\qr{d},\qr{M}):\bold{Sch}_{\Bbbk}^\mathrm{op}\lra\Sets
$$
that, at each $\Bbbk$-scheme $\pi:T\lra\Spec{\Bbbk}$,
returns the set
$$
\mathscr{G}\!r_{\Qc}^{\mathrm{nr}}(\qr{d},\qr{M})(T)
\ \ =\ \ 
\left\{
	\begin{array}{l}
		\mathrm{quotients\ }\pi^{*}\qr{M}\lra\!\!\!\!\rightarrow\qr{E}\mathrm{\ in\ }\bold{Rep}_{\bold{Coh}/T}(\Qc)\mathrm{,\ such}
		\\
		\mathrm{that,\ at\ each\ vertex\ }v\in\Qc_{0},\mathrm{\ the\ }\mathscr{O}_{T}\textendash\mathrm{module}
		\\
		\qr{E}(v)\mathrm{\ is\ locally\ free\ of\ rank\ }\mathrm{dim}_{\Bbbk\ }\qr{M}(v)-\qr{d}(v)
	\end{array}
\right\}\Big/\mathrm{isomorphism}.
$$
It follows immediately from the definition that this functor
$\mathscr{G}\!r_{\Qc}^{\mathrm{nr}}(\qr{d},\qr{M})$
is represented by a closed subscheme
$$
\mathrm{Gr}_{\Qc}^{\mathrm{nr}}(\qr{d},\qr{M})
$$
in the product $\prod_{v\in\Qc_{0}}\mathrm{Gr}(\qr{d}(v),\qr{M}(v))$
of classical Grassmannians.
This subscheme
$\mathrm{Gr}_{\Qc}^{\mathrm{nr}}(\qr{d},\qr{M})$
need not be reduced in general.
We will refer to it as the
{\em non-reduced quiver Grassmannian}.

The {\em quiver Grassmannian}
$$
\mathrm{Gr}_{\Qc}(\qr{d},\qr{M})
$$
is, by definition, the reduced $\Bbbk$-scheme underlying
$\mathrm{Gr}_{\Qc}^{\mathrm{nr}}(\qr{d},\qr{M})$.
Thus $\mathrm{Gr}_{\Qc}(\qr{d},\qr{M})$
is, in particular, a projective $\Bbbk$-variety that is not necessarily connected or irreducible.
Most importantly for our purposes,
note that the $\Bbbk$-valued points of
$\mathrm{Gr}_{\Qc}(\qr{d},\qr{M})$
are in bijection with $\qr{d}$-dimensional
subrepresentations
$\qr{N}$ of $\qr{M}$.
For details, see \cite{C-B} and \cite{Sch}.

To give an example:
If $\Qc$ is the quiver
$\Qc=\bullet$
consisting of a single vertex with no arrows,
then a representation $\qr{M}$ of $\Qc$
in $\FVect{\Bbbk}$
amounts to a single $\Bbbk$-vector space
$M=\qr{M}(\bullet)$,
and a dimension vector $\qr{d}$ on $\Qc$
amounts to a single integer $d=\qr{d}(\bullet)$.
The resulting quiver Grassmannian is nothing but the classical Grassmannian
$\mathrm{Gr}_{\Qc}(\qr{d},\qr{M})=\mathrm{Gr}(d,M)$,
parametrizing
$d$-dimensional subspaces of $M$.

\vskip .5cm

\subsection{Construction of $\bold{FM}_{\bold{\con{V}}}$}
Consider the following special case of a quiver,
a representation of the quiver, and its quiver Grassmannians:

Fix a ground field $\Bbbk$ and a finite dimensional vector space $V$.

For each nonempty, finite set $I$,
let the quiver $\Qc(I)$ {\em associated to} $I$
be the quiver whose vertices are the nonempty subsets $J\sub I$,
and whose arrows are proper inclusions $J'\subset J$ of nonempty subsets of $I$.
We've already seen that our vector space $V$
determines a conormal functor
$$
\con{V}:\fnS\lra\FVect{\Bbbk}
$$
This conormal functor $\con{V}$ induces, in turn,
a representation $\conqr{V}$
of the quiver $\Qc(I)$,
namely the representation $\conqr{V}$ taking each nonempty subset
$J\sub I$ to the $\Bbbk$-vector space
$\conqr{V}(J):=\con{V}(J)$,
and taking each proper inclusion $\jmath:J'\hookrightarrow J$
of nonempty subsets of $I$
to the $\Bbbk$-linear map
$\conqr{V}(\jmath):=\con{V}(\jmath):\con{V}(J')\hookrightarrow\con{V}(J)$
of values of the conormal functor.

We call $\conqr{V}$
the {\em conormal representation}
of the quiver $\Qc(I)$.

With this representation in hand,
each dimension vector $\qr{d}$ on $\Qc(I)$
determines a quiver Grassmannian:
$$
\mathrm{Gr}_{\Qc(I)}(\qr{d},\conqr{V})
$$
Let us consider all possible choices of the dimension vector
$\qr{d}$ simultaneously,
by taking the disjoint union of all the quiver Grassmannians
$\mathrm{Gr}_{\Qc(I)}(\qr{d},\conqr{V})$.
We define $\mathrm{FM}_{\con{V}}(I)$
to be this disjoint union:
$$
\mathrm{FM}_{\con{V}\!}(I)\ \ :=\bigsqcup_{\qr{d}\in\ZZ\Qc_{{}_{0\!}}(I)}\!\!\!\!\mathrm{Gr}_{\Qc(I)}(\qr{d},\conqr{V})
$$
Notice that this disjoint union has only finitely many nonempty terms,
since the quiver Grassmannian
$\mathrm{Gr}_{\Qc(I)}(\qr{d},\conqr{V})$
can only be nonempty if
$0\leq\qr{d}(J)\leq\mathrm{dim}_{\Bbbk\ }\conqr{V}(J)$
for each nonempty subset
$J\sub I$.

\vskip .5cm

\begin{proof}
{\bf (Proof of Theorem \ref{representability})}
Fix a nonempty, finite set $I$.
Recall that $\Mon{I}{\fnS}$
denotes its poset of subobjects in the category
$\bold{S}$ of nonempty finite sets.

Let $\qr{N}\subset\conqr{V}$
be an arbitrary subrepresentation.

Each object
$\iota:J\hookrightarrow I$ in
$\Mon{I}{\fnS}$
comes with a canonical isomorphism
$\con{V}(J)\xrightarrow{\ \sim\ }\con{V}(\iota(J))$.
Thus the subrepresentation
$\qr{N}\subset\conqr{V}$
determines the quotient
$E(J\!\hookrightarrow\!I)$ appearing in the short exact sequence
$$
0\lra\qr{N}(\iota(J))\lra\con{V}(J)\lra E(J\!\hookrightarrow\!I)\lra0
$$
It follows immediately from the defining property of any subrepresentation of
$\conqr{V}$
that the epimorphisms
$\con{V}(J)\lra\!\!\!\!\rightarrow E(J\!\hookrightarrow\!I)$
obtained in this way constitute an object in
$\bold{Epi}^{\con{V}}[\Mon{I}{\fnS},\FVect{\Bbbk}]$.

Conversely, when we restrict $\Mon{I}{\fnS}$
to its subposet consisting of {\em actual} nonempty subsets $J\sub I$,
the kernels of these epimorphisms
$\conqr{V}(J)\lra\!\!\!\!\rightarrow E(J\!\hookrightarrow\!I)$
recover our original subrepresentation
$\qr{N}\subset\conqr{V}$.
Thus we have bijections
$$
\FM{\con{V}\!}(I)\ \ \cong\ \ \Hom_{\Sch{\Bbbk}}(\Spec{\Bbbk},\mathrm{FM}_{\con{V}\!}(I))
$$

To define an operadic structure on the family of projective varieties
$\mathrm{FM}_{\con{V}\!}(I)$,
consider the sheaf-valued functor
$\mathrm{FM}_{\con{V}\!}^{\mathrm{nr}}:\isop{\fnS}\lra\bold{Sh}(\bold{Sch}_{\Bbbk})$
that takes each nonempty finite set $I$
to the sheaf over $\bold{Sch}_{\Bbbk}$
represented by the disjoint union of {\em non-reduced} quiver Grassmannians:
$$
\mathrm{FM}_{\con{V}\!}^{\mathrm{nr}}(I)
\ \ :=
\bigsqcup_{\qr{d}\in\ZZ\Qc(I)}\!\!
\mathrm{Gr}_{\Qc(I)}^{\mathrm{nr}}(\qr{d},\conqr{V})
$$
It follows from the definition of the non-reduced quiver Grassmannian
$\mathrm{Gr}_{\Qc(I)}^{\mathrm{nr}}(\qr{d},\conqr{V})$
that, over each finite-type $\Bbbk$-scheme
$\pi:T\lra\Spec{\Bbbk}$,
the set-valued functor
$\mathrm{FM}_{\con{V}\!}^{\mathrm{nr}}(-)(T):\isop{\fnS}\lra\Sets$
is the result of a Fulton-MacPherson construction for the right-acute functor
$\pi^{*}\con{V}:\fnS\lra\bold{Coh}_{T}$.
Thus Theorem \ref{FMop}, combined with Yoneda's Lemma,
gives an operadic structure on the family of non-reduced
$\Bbbk$-schemes
$\mathrm{FM}_{\con{V}\!}^{\mathrm{nr}}(I)$.
This operadic structure restricts to an operadic structure on the reduced
$\Bbbk$-schemes that underly the
$\mathrm{FM}_{\con{V}\!}^{\mathrm{nr}}(I)$,
that is, to an operadic structure on the varieties
$\mathrm{FM}_{\con{V}\!}(I)$,
and it clearly coincides with the operadic structure induced by the Fulton-MacPherson construction for $\con{V}$.
\end{proof}

\vskip .5cm

\subsection{Relation to Chen-Gibney-Krashen operads and operads of stable curves}
Each Chen-Gibney-Krashen operad $\mathrm{CGK}_{V}$ is a suboperad of the operad $\mathrm{FM}_{\con{V}}$.

Specifically, for each nonempty finite set $I$,
let $\qr{scr}$ denote the dimension vector on $\Qc(I)$ that, at each nonempty subset $J\sub I$, returns
$$
\qr{scr}(J)=
\left\{
	\begin{array}{ll}
		\mathrm{dim}_{\Bbbk}(\conqr{V}(J))-1  & \mbox{if} \ \ |J|>1 \\
		\ \ \ \ \ \ \ \ \ \ 0 & \mbox{if} \ \ |J|=1
	\end{array}
\right.
$$
Then the component $\mathrm{Gr}_{\Qc(I)}(\qr{scr},\conqr{V})$
of $\mathrm{FM}_{\con{V}\!}(I)$
is canonically isomorphic the Chen-Gibney-Krashen space $\mathrm{CGK}_{V}(|I|)$, since the functors of points of these two varieties coincide.
Moreover, the operadic structure on
$\non{FM}_{\con{V}}:\isop{\fnS}\longrightarrow\bold{Var}_{\Bbbk}$
restricts to an operadic structure on the varieties
$\mathrm{Gr}_{\Qc(I)}^{\mathrm{nr}}(\qr{scr},\conqr{V})$,
and this structure coincides with the Chen-Gibney-Krashen operadic structure.

When $V$ is 1-dimensional,
there is an identification $\mathrm{CGK}_{V}(|I|)=\overline{M}_{0,|I|+1}$,
where $\overline{M}_{0,|I|+1}$
denotes the moduli space of stable, $(|I|+1)$-marked rational curves, and this identification constitutes an isomorphism of operads
(see \cite{CGK} for details).
Thus $\mathrm{FM}_{\con{V}}$
contains the operad of stable, marked rational curves as a suboperad
when $\mathrm{dim}_{\Bbbk}V=1$.

Further properties of Chen-Gibney-Krashen operads extend to the operads
$\mathrm{FM}_{\con{V}}$ as well,
but we leave these details for a subsequent paper.

\vskip .7cm

\subsection{Further examples}
Theorem \ref{representability} and its proof
set the pattern for proofs that many examples of Fulton-MacPherson operads are representable.
Rather than attempt to prove some general statement,
we provide several further examples,
illustrating how one modifies the proof of Theorem \ref{representability}
in each case:

\vskip .5cm

\begin{example}
{\bf (A representable, abstract operad with inputs of "type-B.")}

Let $\bold{B}$ denote the category whose objects are
$\ZZ$-lattices of the form $\ZZ^I$,
for any finite (possibly empty) set $I$,
and whose morphisms
$\ZZ^{I}\lra\ZZ^{J}$
are given by {\em extended signed permutation matrices} in
$\mathrm{Mat}_{|I|\times|J|}(\ZZ)$,
i.e., those $|I|\times|J|$-matrices $M$
whose every entry is either
$0$, $1$, or $-1$,
such that each row and each column in $M$
contains at most one nonzero entry.

This category $\bold{B}$
can be thought of as a category of type-B root systems.
For instance, if $I$
is a finite set containing at least two elements, then
$\mathrm{Aut}_{\bold{B}}(\ZZ^I)\cong\mathcal{W}(B_{|I|})$,
the Weyl group for the root system $\mathrm{B}_{|I|}$.
Notice also that homsets in
$\bold{B}$
are finite.
In fact,
$\bold{B}$
behaves like the category
$\fnS$
in many respects.
Most important to us:

\begin{lemma}
The category $\bold{B}$
is a good input category.
\end{lemma}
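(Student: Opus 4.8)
The plan is to verify the three axioms GI-1, GI-2, GI-3 of a good input category directly, after making the morphisms of $\bold{B}$ explicit. A morphism $\ZZ^I\to\ZZ^J$ is the same datum as a subset $I'\sub I$, an injection $\phi\colon I'\hookrightarrow J$, and a sign function $\epsilon\colon I'\to\{\pm1\}$: the associated matrix has entry $\epsilon(i)$ in row $i$, column $\phi(i)$ for $i\in I'$, and a zero row for $i\notin I'$. Such a morphism is $\ZZ$-linearly injective — equivalently a monomorphism in $\bold{B}$ — exactly when $I'=I$, and is $\ZZ$-linearly surjective — equivalently an epimorphism in $\bold{B}$ — exactly when $\phi$ carries $I'$ bijectively onto $J$, in which case it admits a section. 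Up to composition with signed permutations, then, a monomorphism into $\ZZ^I$ is a coordinate inclusion $\ZZ^S\hookrightarrow\ZZ^I$ with $S\sub I$, and an epimorphism out of $\ZZ^I$ is the coordinate projection $\ZZ^I\twoheadrightarrow\ZZ^{I\setminus S}$ that deletes the $S$-coordinates.

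The object $\ZZ^\emptyset=0$ is a zero object of $\bold{B}$: the only $|I|\times0$ (resp.\ $0\times|J|$) extended signed permutation matrix is the empty one. Hence $\bold{B}$ is pointed, its terminal object is $\ast=0$, every point set $\Hom_{\bold{B}}(0,\ZZ^J)$ is a singleton, and GI-2 holds; as in the Charades example, this also renders OP-3 vacuous for operads with inputs in $\bold{B}$. For GI-1, a subobject of $\ZZ^I$ is represented by a signed injection $\ZZ^{I'}\hookrightarrow\ZZ^I$, and composing with an isomorphism of $\ZZ^{I'}$ shows that two of these represent the same subobject exactly when their images coincide as subsets of $I$; thus $\bold{Mon}_{\ZZ^I}\bold{B}$ is equivalent to the finite poset of subsets of $I$, and $\bold{B}$ is well-powered.

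For GI-3 I would first exhibit each epimorphism as an acute quotient: writing it up to isomorphism as the coordinate projection $\ZZ^I\twoheadrightarrow\ZZ^{I\setminus S}$, one checks that $\ZZ^{I\setminus S}$ (with this projection) is the cokernel of the coordinate inclusion $\ZZ^S\hookrightarrow\ZZ^I$ while $\ZZ^S$ is the kernel of the projection, so that the acute square with corners $\ZZ^S,\ \ZZ^I,\ 0,\ \ZZ^{I\setminus S}$ is bicartesian. Next, any $f\colon\ZZ^I\to\ZZ^J$ with data $(I',\phi,\epsilon)$ has an image factorization $\ZZ^I\xrightarrow{\,g\,}\ZZ^{\phi(I')}\hookrightarrow\ZZ^J$ in which the second map is the coordinate inclusion and $g$ is the signed partial injection $i\mapsto\phi(i)$ carrying the signs $\epsilon$; minimality holds because any monomorphism $\ZZ^K\hookrightarrow\ZZ^J$ through which $f$ factors is $\ZZ$-linearly injective, so its image contains $\phi(I')$, and this yields the required unique comparison monomorphism $\ZZ^{\phi(I')}\hookrightarrow\ZZ^K$. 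Because the morphisms of $\bold{B}$ are partial injections (the "at most one nonzero entry per column" condition), this first factor $g$ is always a signed surjection onto $\ZZ^{\phi(I')}$, hence an acute quotient. So in $\bold{B}$ the image factorization of \emph{every} morphism is an acute quotient followed by a monomorphism, which in particular gives the conclusion of GI-3.

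The only step I expect to require genuine care is GI-3 — the verification of the bicartesian square identifying epimorphisms with acute quotients, together with the explicit image factorizations. Note the contrast with $\fnS$: there the first factor of a general image factorization collapses several fibers at once, and the "second isomorphism" content of GI-3 is genuinely needed, whereas in $\bold{B}$ injectivity is built into the morphisms, so GI-3 drops out once image factorizations are described. Beyond bookkeeping with which rows of a matrix are zeroed out, no computation is involved.
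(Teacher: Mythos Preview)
The paper's own proof reads, in its entirety, ``The proof is straight-forward. We leave its details to the reader.'' Your proposal correctly supplies those details by the natural direct route---parametrizing morphisms of $\bold{B}$ as signed partial injections, identifying the zero object, subobjects, epimorphisms, and acute quotients explicitly, and observing that in $\bold{B}$ the image factorization of \emph{every} morphism (not only mono-followed-by-acute-quotient composites) begins with an acute quotient---so GI-3 holds in a slightly stronger form than required.
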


The proof is straight-forward. We leave its details to the reader.

\vskip .3cm

Fix a finite dimensional
$\Bbbk$-vector space $V$,
and consider the functor $V\!\tensor_{\ZZ}-:\bold{B}\lra\FVect{\Bbbk}$
taking $\ZZ^{I}\mapsto V^{I}$.
This functor is acute,
with locally defined, locally small relations over
$\bold{B}$.
Hence by Theorem \ref{FMop}, the Fulton-MacPherson construction for $V\!\tensor_{\ZZ}-$
produces an abstract, set-valued operad
$$
\FM{V\!\tensor_{\ZZ}-}:\isop{\bold{B}}\lra\Sets
$$
For each finite (possibly empty) set $I$,
let $\Bc(I)$ be the quiver whose vertices are all subsets $J\sub I$,
with arrows corresponding to all proper inclusions $J\subset J'$ in $I$.
Then the functor $V\!\tensor_{\ZZ}-$ induces a representation
$\qr{V}^{I}$ of $\Bc(I)$.
By essentially the same argument in our proof of Theorem \ref{representability},
with $\qr{V}^{I}$ replacing $\conqr{V}$,
one shows that each set $\mathcal{FM}_{V\!\tensor_{\ZZ}-}(\ZZ^{I})$
is in bijection with the set of
$\Bbbk$-points of the projective $\Bbbk$-variety
$$
\mathrm{FM}_{V\!\tensor_{\ZZ}-}(\ZZ^I)
\ \ \ =
\bigsqcup_{\qr{d}\in\ZZ\Bc_{{}_{0\!}}(I)}
\!\!\!\!\mathrm{Gr}_{\Bc(I)}(\qr{d},\qr{V}^{I})
$$
The resulting functor
$\mathrm{FM}_{V\!\tensor_{\ZZ}-}:\isop{\bold{B}}\lra\bold{Var}_\Bbbk$
admits an abstract operadic structure that recovers the abstract operadic structure of $\mathcal{FM}_{V\!\tensor_{\ZZ}-}\ $
at the level of $\Bbbk$-points.

This example has many generalizations.
For example, we can interpret $\bold{B}$
as {\em the category of} $\mathbb{F}_{1^2}$-{\em modules}
in Kapranov and Smirnov's theory of linear algebra over the (nonexistent) finite extensions $\mathbb{F}_{1^{n}}$
of the (nonexistent) field $\mathbb{F}_{1}$
with one element.
We can then replace the category $\bold{B}$
with {\em the category of
$\mathbb{F}_{1^{n}}$-modules},
for any integer $n\geq2$.
See \cite{KS} for details.
All the above results then hold upon replacing our ground field
$\Bbbk$ with the field
$\Bbbk(\mu_{n})$,
the extension of $\Bbbk$ by the $n^\mathrm{th}$-roots of unity.

\end{example}

\begin{example}
{\bf (Representable operads with inputs in simplicial sets.)}
Fix a smooth, projective $\Bbbk$-variety $X$.
Recall that
$\textsf{F}s\bold{S}$
is the category of finite-dimensional simplicial sets
$I_{\bullet}$
for which every $I_n$ is finite and nonempty.
Let $s\bold{Coh}_{X}$ denote the category of simplicial coherent
$\mathscr{O}_X$-modules.

The variety $X$ determines a {\em simplicial conormal functor}
$\mathscr{C}_{{}_{\!X\!}}:\textsf{F}s\bold{S}\lra s\bold{Coh}_{X}$
as follows:

Each simplicial set $I_{\bullet}$ in $\textsf{F}s\bold{S}$
determines a cosimplicial $\Bbbk$-variety
$X^{I_{\bullet}}$
whose variety of $n$-simplices is the cartesian product
$X^{I_{n}}$.
Let $\Delta_{n}:X\xhookrightarrow{\ \ \ }X^{I_n}$
denote the diagonal.
Since these diagonals are "dual" to the terminal maps $I_{n}\lra\{\ast\}$,
they commute with every morphism
$X^{\phi}:X^{I_n}\lra X^{I_m}$
induced by by a morphism $\phi:I_{m}\lra I_n$ in $I_\bullet$.
For instance,
thinking purely in terms of coface and codegeneracy morphisms in
$X^{I_\bullet}$, this means that every triangle with nadir $X$ in the diagram
\begin{equation}\label{cosimp}
\xy
(0,0)*+{X^{I_0}}="X1";
(-20,0)*+{X^{I_1}}="X2";
(-40,0)*+{X^{I_2}}="X3";
(-60,0)*+{X^{I_3}}="X4";
(-67.5,0)*+{\cdots}="X5";
(-61,-2)*+{ }="X6";
(-64,-12.5)*+{\cdots}="X7";
(-30,-20)*+{X}="X";
{\ar@/_.5pc/ "X2"; "X1"};
"X1";"X2" **\crv{(-4,5.5) & (-16,5.5)} ?(.8)*\dir{>};
{\ar@/_.5pc/ "X1"; "X2"};
{\ar@/_.5pc/ "X3"; "X2"};
"X2";"X3" **\crv{(-24,5.5) & (-36,5.5)} ?(.8)*\dir{>};
"X2";"X3" **\crv{(-20,8) & (-40,8)} ?(.85)*\dir{>};
{\ar@/_.5pc/ "X2"; "X3"};
"X3";"X2" **\crv{(-36,-5.5) & (-24,-5.5)} ?(.8)*\dir{>};
{\ar@/_.5pc/ "X4"; "X3"};
"X3";"X4" **\crv{(-44,5.5) & (-56,5.5)} ?(.8)*\dir{>};
"X3";"X4" **\crv{(-41,8) & (-58,8)} ?(.85)*\dir{>};
"X3";"X4" **\crv{(-38,11) & (-60,11)} ?(.9)*\dir{>};
{\ar@/_.5pc/ "X3"; "X4"};
"X4";"X3" **\crv{(-56,-5.5) & (-44,-5.5)} ?(.8)*\dir{>};
"X4";"X3" **\crv{(-60,-8) & (-40,-8)} ?(.87)*\dir{>};
{\ar@/_1pc/_(.65){\Delta_1} "X"; "X1"};
{\ar@/_.5pc/_{\Delta_2} "X"; "X2"};
{\ar@/^.5pc/^{\Delta_3} "X"; "X3"};
{\ar@/^1pc/^(.7){\Delta_4} "X"; "X6"}
\endxy
\end{equation}
commutes.

At each $I_n$, we can form the conormal sheaf
$\mathscr{C}_{{}_{X}}(I_n)$
of the diagonal in $X^{I_n}$.
If $\mathscr{I}_{n}$ denotes the ideal sheaf of the diagonal in $X^{I_n}$,
then
$\mathscr{C}_{{}_{X}}(I_n)$ is, by definition, the $\mathscr{O}_{X}$-module
$$
\mathscr{C}_{{}_{X}}(I_n)=\Delta_{n}^{\ast}\mathscr{I}_{n}
$$
If $\phi:I_{m}\lra I_{n}$ is any morphism in $I_\bullet$,
let $f$ denote the morphism $f=X^{\phi}:X^{I_{n}}\lra X^{I_m}$
that $\phi$ induces in $X^{I_\bullet}$.
Since the triangle
$$
\xy
(0,0)*+{X}="X";
(-12,10)*+{X^{I_n}}="A";
(12,10)*+{X^{I_m}}="B";
{\ar@{->} ^{\Delta_{n}}"X"; "A"};
{\ar@{->}_{\Delta_{m}} "X"; "B"};
{\ar@{->}^{f} "A"; "B"}
\endxy
$$
commutes,
the morphism $f$ induces a morphism
$f^{\#}:f^{\ast}\mathscr{I}_{m}\lra\mathscr{I}_{n}$
of $\mathscr{O}_{X^{I_{n}}}$-modules,
which induces, in turn, a morphism
$$
\Delta_{n}^{\ast}(f^{\#})\ :\ \ \mathscr{C}_{{}_{X}}(I_m)\lra\mathscr{C}_{{}_{X}}(I_n)
$$
of $\mathscr{O}_{X}$-modules.
These morphisms $\Delta_{n}^{\ast}(f^{\#})$,
corresponding to all morphisms $\phi$ in $I_\bullet$,
equip the family $\big\{\mathscr{C}_{{}_{\!X\!}}(I_n)\big\}_{n=0}^{\infty}$
with the structure of a simplicial $\mathscr{O}_X$-module, that is, an object in
$s\bold{Coh}_{X}$:

$$
\xy
(0,0)*+{\mathscr{C}_{{}_{\!X\!}}(I_0)}="X1";
(-20,0)*+{\mathscr{C}_{{}_{\!X\!}}(I_1)}="X2";
(-40,0)*+{\mathscr{C}_{{}_{\!X\!}}(I_2)}="X3";
(-60,0)*+{\mathscr{C}_{{}_{\!X\!}}(I_3)}="X4";
(-70,0)*+{\cdots}="X5";
{\ar@/^.75pc/ "X2"; "X1"};
"X2";"X1" **\crv{(-16,6.5) & (-4,6.5)} ?(.8)*\dir{>};
{\ar@/^.75pc/ "X1"; "X2"};
{\ar@/^.75pc/ "X3"; "X2"};
"X3";"X2" **\crv{(-36,6.5) & (-24,6.5)} ?(.8)*\dir{>};
"X3";"X2" **\crv{(-40,9) & (-20,9)} ?(.85)*\dir{>};
{\ar@/^.75pc/ "X2"; "X3"};
"X2";"X3" **\crv{(-24,-6.5) & (-36,-6.5)} ?(.8)*\dir{>};
{\ar@/^.75pc/ "X4"; "X3"};
"X4";"X3" **\crv{(-56,6.5) & (-46,6.5)} ?(.8)*\dir{>};
"X4";"X3" **\crv{(-59,9) & (-43,9)} ?(.86)*\dir{>};
"X4";"X3" **\crv{(-62,11.5) & (-40,11.5)} ?(.89)*\dir{>};
{\ar@/^.75pc/ "X3"; "X4"};
"X3";"X4" **\crv{(-44,-6.5) & (-56,-6.5)} ?(.845)*\dir{>};
"X3";"X4" **\crv{(-40,-9) & (-60,-9)} ?(.88)*\dir{>};
\endxy
$$

The resulting simplicial conormal functor
$\mathscr{C}_{{}_{\!X\!}}:\textsf{F}s\bold{S}\lra s\bold{Coh}_{X}$
is right-acute.
Let $\mathrm{Ch}^{\leq0}\bold{Coh}_{X}$
denote the category of chain complexes of coherent
$\mathscr{O}_X$-modules, supported in degrees $\leq0$,
and let $\textsf{N}:s\bold{Coh}_{X}\lra\mathrm{Ch}^{\leq0}\bold{Coh}_{X}$
denote the {\em normalized chain complex} functor.
By Dold-Kan, $\textsf{N}$ is an equivalence of categories.
Hence the composite
$$
\textsf{N}\mathscr{C}_{{}_{\!X\!}}:\textsf{F}s\bold{S}\lra\mathrm{Ch}^{\geq0}\bold{Coh}_{X},
$$
which we will call the {\em conormal chains functor},
is right-acute.

By Theorem \ref{FMop}, the Fulton-MacPherson construction for
$\textsf{N}\mathscr{C}_{{}_{\!X\!}}$
produces an abstract operad
$$
\FM{\textsf{N}\mathscr{C}_{{}_{\!X\!}}}:\isop{(\textsf{F}s\bold{S})}\lra\Sets
$$
This operad is represented by an operad
$\mathrm{FM}_{\textsf{N}\mathscr{C}_{{}_{\!X\!}}}:\isop{(\textsf{F}s\bold{S})}\lra\bold{Sch}_{\Bbbk}$
in projective $\Bbbk$-schemes.
For each simplicial set $I_\bullet$ in $\textsf{F}s\bold{S}$,
we realize each variety
$\mathrm{FM}_{\textsf{N}\mathscr{C}_{{}_{\!X\!}}}(I_\bullet)$
as follows:

Let $\Qc(I_{\bullet})$
be the finite, acyclic quiver with vertices given by pairs $(J_{\bullet},n)$
consisting of a nonempty simplicial subset
$J_\bullet\sub I_\bullet$ and an integer $-\mathrm{dim}_{\ \!}I_\bullet\leq n\leq0$,
and with arrows in $\Qc(I_\bullet)$ of two forms, either
$$
(J_{\bullet},n)\lra(J_{\bullet},n-1),
\ \ \ \ \ \ \mathrm{or}\ \ \ \ \ \ 
(J'_{\bullet},n)\lra(J_{\bullet},n)
$$
for some proper inclusion $J'_{\bullet}\subset J_\bullet$
of nonempty simplicial subsets of $I_\bullet$.
The conormal chains functor $\textsf{N}\mathscr{C}_{{}_{\!X\!}}$
induces a representation $\qr{\textsf{N}C}_{{}_{\!X\!}}$
of $\Qc(I_\bullet)$ in $\bold{Coh}_{X}$,
with $\qr{\textsf{N}C}_{{}_{\!X\!}}(J_{\bullet},n)$
defined to be the $\mathscr{O}_X$-module
$\textsf{N}\mathscr{C}_{{}_{\!X\!}}(J_\bullet)_n$
of $(-n)$-chains in $\textsf{N}\mathscr{C}_{{}_{\!X\!}}(J_\bullet)$.
This representation takes each arrow of the form
$(J_{\bullet},n)\lra(J_{\bullet},n-1)$ in $\Qc(I_\bullet)$
to the differential
$\partial:\textsf{N}\mathscr{C}_{{}_{\!X\!}}(J_\bullet)_{n}\lra\textsf{N}\mathscr{C}_{{}_{\!X\!}}(J_\bullet)_{n-1}$
in the chain complex $\textsf{N}\mathscr{C}_{{}_{\!X\!}}(J_\bullet)$,
and takes each arrow of the form
$(J'_{\bullet},n)\lra(J_{\bullet},n)$ in $\Qc(I_\bullet)$,
for $J'_{\bullet}\subset J_\bullet$,
to the inclusion
$\textsf{N}\mathscr{C}_{{}_{\!X\!}}(J'_{\bullet})_{n}\xhookrightarrow{\ \ \ }\textsf{N}\mathscr{C}_{{}_{\!X\!}}(J_{\bullet})_{n}$
of $(-n)$-chains.
Thus a subrepresentation of
$\qr{\textsf{N}C}_{{}_{\!X\!}}$
is the same thing as a compatible family consisting of one subcomplex of each chain complex
$\textsf{N}\mathscr{C}_{{}_{\!X\!}}(J_\bullet)$,
for $J_{\bullet}\sub I_{\bullet}$.

There is a nonreduced {\em quiver quot scheme} $\mathrm{Quot}_{\Qc(I_\bullet)}(\qr{\textsf{N}C}_{{}_{\!X\!}})$
classifying such subrepresentations,
namely the closed subscheme in the product of classical quot schemes
$$
\mathrm{Quot}_{\Qc(I_\bullet)}(\qr{\textsf{N}C}_{{}_{\!X\!}})
\ \ \subset
\prod_{\O\neq J_{\bullet}\sub I_{\bullet}\ }\!\!\!\!\!\prod_{n=0}^{-\mathrm{dim}_{\ \!}I_\bullet}\!\!\!\mathrm{Quot}\big(\textsf{N}\mathscr{C}_{X}(J_n)/X/\Bbbk\big)
$$
cut out by the subrepresentation condition in
$\bold{Rep}_{\bold{Coh}/X}(\Qc(I_\bullet))$.
Since each of the classical quot schemes
$\mathrm{Quot}\big(\textsf{N}\mathscr{C}_{X}(J_n)/X/\Bbbk\big)$
already contains a connected component associated to each Hilbert polynomial, we can simply define
$$
\mathrm{FM}_{\textsf{N}\mathscr{C}_{{}_{\!X\!}}}(I_\bullet):=\mathrm{Quot}_{\Qc(I_\bullet)}(\qr{\textsf{N}C}_{{}_{\!X\!}})
$$
in the present example. There is no need to take anything like a disjoint union over dimension vectors.

These $\Bbbk$-schemes
$\mathrm{FM}_{\textsf{N}\mathscr{C}_{{}_{\!X\!}}}(I_\bullet)$
form an abstract operad
$\mathrm{FM}_{\textsf{N}\mathscr{C}_{{}_{\!X\!}}}:\isop{(\textsf{F}s\bold{S})}\lra\bold{Sch}_{\Bbbk}$
whose sets of $\Bbbk$-points recover the operad
$\FM{\textsf{N}\mathscr{C}_{{}_{\!X\!}}}:\isop{(\textsf{F}s\bold{S})}\lra\Sets$.

\end{example}

\vskip 1.5cm

\vskip 1cm

Tyler Foster

\end{document}